\font \Bbbten=msbm10 \font \Bbbsev=msbm7 \font \Bbbfiv=msbm5
\newcommand{\N}{\mbox{$I\!\!N$}}
\newcommand{\Z}{\mbox{$Z\!\!\!Z$}}
\newcommand{\R}{\mbox{$I\!\!R$}}
\newcommand{\dist}{{\rm dist}}
\newcommand{\cita}[7]{{\sc #1, }{\it #2, }{\small #3, { #4 } (#5), p.
#6-#7.}}
\newcommand{\cit}[5]{{\sc #1, }{\it #2, }{\small #3, {\bf #4 } #5.}}
\theoremstyle{plain}
\newtheorem{thm}{Theorem}[section]
\newtheorem{claim}[thm]{Claim}
\newtheorem{Df}{Definition}[section]
\newtheorem{Teo}{Theorem}[section]
\newtheorem{Lem}[Teo]{Lemma}
\newtheorem{Cor}[Teo]{Corollary}
\newtheorem{Prop}[Teo]{Proposition}
\newtheorem{Obs}[Teo]{Remark}
\title{Evolution of the population of {\it Microtus Epiroticus}: the Yoccoz-Birkeland model.}
\author{J. J. Nieto\footnote{J.J. Nieto is  partially supported by Ministerio de Educacion y Ciencia, Spain, and FEDER, Project MTM2010-15314.} \quad M. J. Pacifico\footnote{
  M. J. Pacifico is partially supported by CNPq, FAPERJ and PRONEX Dynamical Systems, Brazil.} \quad J. L. Vieitez\footnote{ J. L. Vieitez is partially supported by PEDECIBA and ANII CE 10065, Uruguay.}
  }
\date{\today}
\begin{document}
\maketitle

\begin{abstract}
 We study the discretized version of a dynamical system given by a model proposed by Yoccoz and Birkeland
to describe the evolution of the population of
 {\it Microtus Epiroticus}  on Svalbard Islands,
 see ${\rm http://zipcodezoo.com/Animals/M/Microtus\_\,epiroticus}$.
 We prove that this discretized version has an attractor $\Lambda$ with a hyperbolic 2-periodic point $p$ in it. For certain values of the parameters the system restricted to the attractor exhibits sensibility to initial conditions. Under certain assumptions that seems to be sustained by numerical simulations, the system is
topologically mixing (see definition \ref{topmix}) explaining some
of the high oscillations observed in Nature. Moreover, we estimate its order-2 Kolmogorov entropy obtaining a positive value. Finally we give numerical evidence that there is a homoclinic point associated with $p$.
\end{abstract}

\footnotesize{
2000 Mathematics Subject Classification: 37M05, 37N25, 92D25}

\setcounter{tocdepth}{2}
\tableofcontents

\section{Introduction}
We study the evolution of the population of {\it Microtus
Epiroticus} (sibling vole) on Svalbard Islands in the Arctic Ocean,
using a model proposed by J. C. Yoccoz and  H. Birkeland, see \cite{Ar}.
It is known that there are no significant predation of these small
mammals but in spite of that, the population presents high
oscillations in its number albeit the lack of food is not a
determinant factor to the occurrence of these phenomena. This population exhibits
dramatic multi-annual fluctuations, by a factor greater than 20, \cite{YI}.

The Sibling Vole ({\it Microtus Epiroticus}) is a species of vole found through much
 of northern Europe. First discovered in 1960 in the Grumantbyen area,
 they were thought to be the Common Vole until a genetic
 analysis correctly identified them in 1990, \cite{FJASY}.

Since these rodents were introduced from Russia on Svalbard Isles between 1930
and 1960, \cite{YI},
the annual oscillations of their number may be explained, at least in part, by a non
total adaptation to the environment, and by the pronounced seasonal fluctuation in
climatic variability at Svalbard where temperatures of $-30$ degrees Celsius
are common, see \cite{YI, LBY}.

\begin{figure}[ht]
\begin{center}
\includegraphics[scale=0.30]{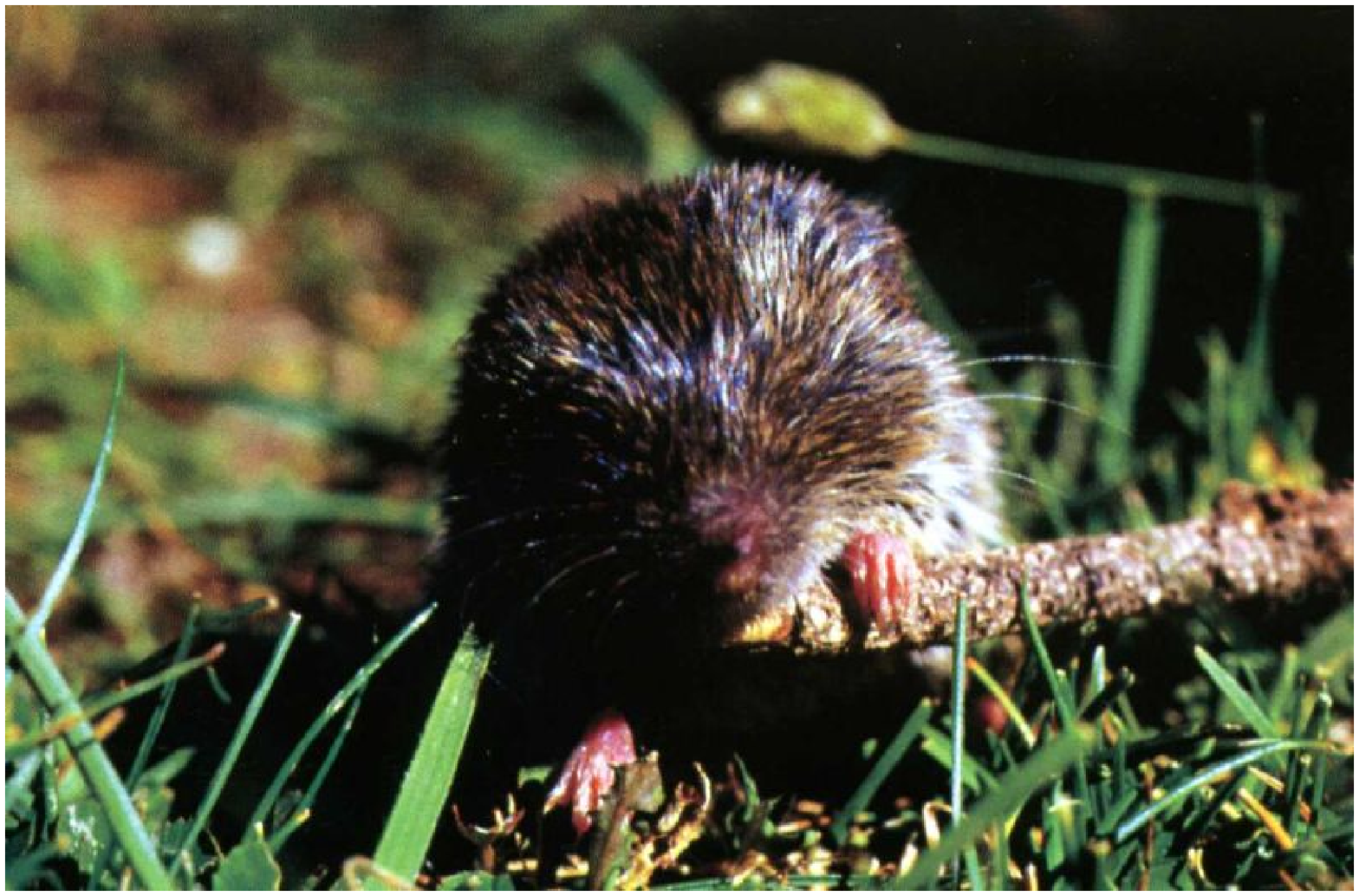}
\includegraphics[scale=0.32]{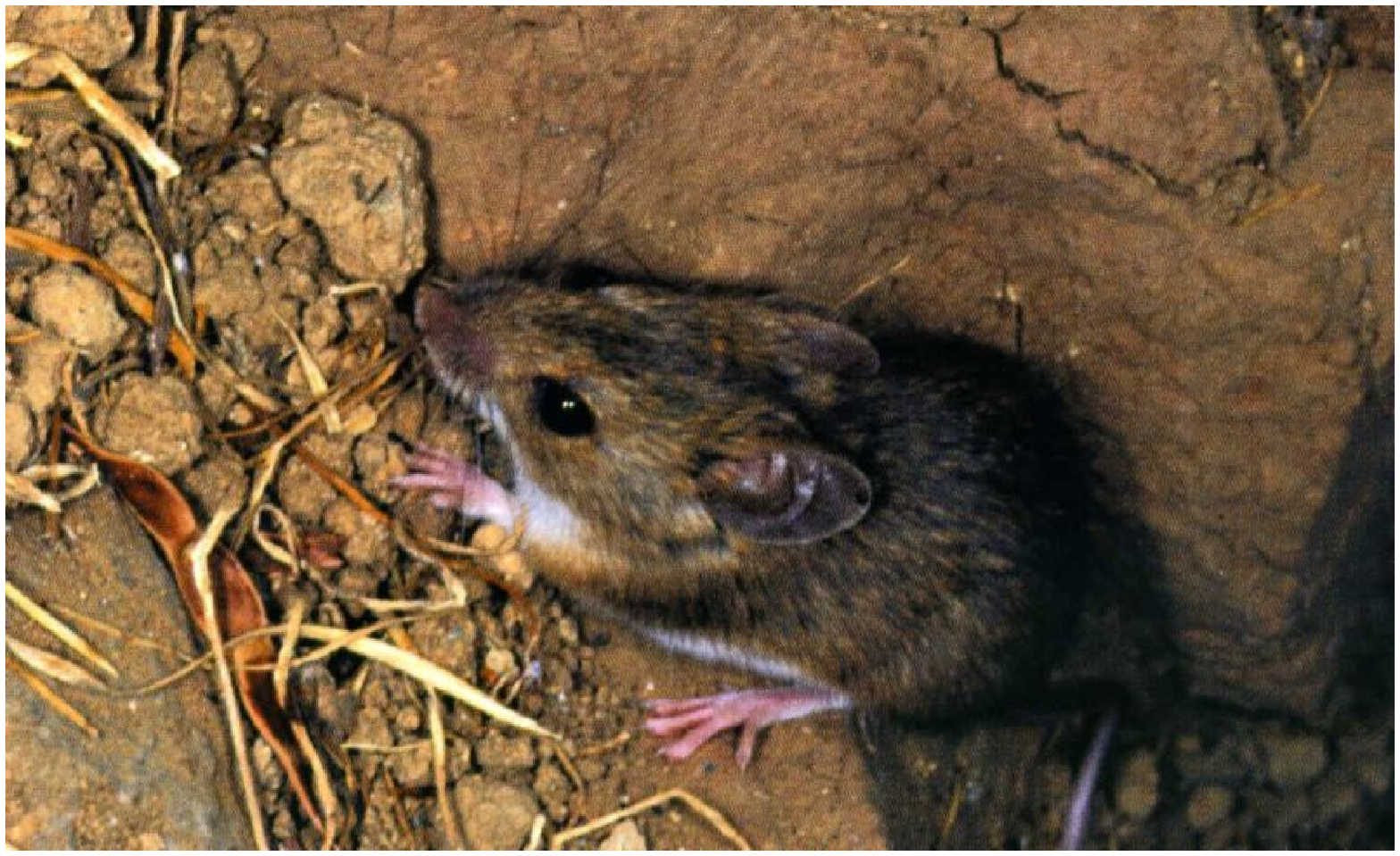}
\caption{Microtus Epiroticus.}\label{fig2}
\end{center}
\end{figure}

Let us first sketch the taxonomy classification of {\it Microtus Epiroticus}.
\begin{itemize}
\item
Domain:   Eukaryota
 \item
\quad Kingdom:  Animalia
\item
\quad$\quad $ Phylum:   Chordata
\item
\quad$\quad\quad$ Class:    Mammalia
\item
\quad$\quad\quad\quad$ Order:   Rodentia
\item
\quad$\quad\quad\quad\quad$ Family:     Muridae
\item
\quad$\quad\quad\quad\quad\quad$ Subfamily:     Arvicolinae
\item
\quad$\quad\quad\quad\quad\quad\quad$\quad Genus: \;\;  Microtus
\item
\quad$\quad\quad\quad\quad\quad\quad\quad$\quad\; Species:  \; Microtus Epiroticus
\end{itemize}

\vspace*{1cm}

Jean Christophe Yoccoz and H. Birkeland, see \cite{Ar}, have proposed the following
equation
\begin{equation}
\label{rato} N(t)=\int_{A_0}^{A_1}N(t-a)m(N(t-a))m_\rho(t-a)S(a)da\,
,\end{equation}
to model the evolution in time of the population of {\it Microtus Epiroticus}. In
the equation it is taken
 into account only the number $N(t)$ of fertile females at certain time $t$.
 Indeed, the inclusion in a model of the number of males is justified when there
are difficulties for a female to find a male (for instance if the
density of population is too small or if the ratio male-population :
female-population is far away from $1:1$) which is not the case for
these rodents. In fact as has been pointed out by R. A. Ims in \cite{Ims},
``spatial clumping of sexually receptive females induces space sharing among male voles''
which implies that it is not difficult for a female to find a male.
Moreover, the quantity of females is about the same as those of males
for these rodents, \cite{Ims2, YI}.

\medbreak

\noindent Let us describe the parameters of the model given by  equation (\ref{rato}):

\begin{enumerate} \label{donde}
\item $t$: is the time measured in years.
\item $N(t)$: is the population of active females at time $t$.
\item $A_0$: is the maturation age,
\item $A_1$: is the maximal age expected for {\it Microtus
Epiroticus}.
\item $m(N)$: annual individual reproduction rate for a population
of $N$ individuals
\item $m_\rho(t)$: is the reproduction probability at time $t$ of
the year.
\item $S(a)$: probability to survive up to $a$ years.
\end{enumerate}
The model take into account the following facts:

\begin{enumerate}
\item[(a)]  The age when the females of {\it Microtus Epiroticus} have their first
offspring
 is about 50 days, i.e., $A_0\approx 0.14$ years (see \cite{YIS}).
\item[(b)]
The maximal age of survival is about 2 years, i.e.,  $A_1=
2$ (see \cite{YI}).
\item[(c)]
The seasonal factor $m_\rho(t)$, that is, the reproduction probability at
time $t$ of the year, varies sharply from 0 in Winter to 1 from
Spring to Autumn. Thus, the definition of $m_\rho(t) $ we adopt is
$$m_\rho(t)=\left\{\begin{array}{c}
    0\quad \mbox{if }\; 0\leq t\!\!\mod (1)<\rho  \\
    1\quad\mbox{if }\; \rho\leq t\!\!\mod (1)<1
  \end{array}\right.
\, .$$
\item[(d)]
The annual individual reproduction rate $m(N)$ for a population of
$N$ individuals, is too high when $N(t)$ is small. Indeed $m(N)$ of the order of a constant
$m_0>30$ individuals is realistic due to the high
fertility of these
   rodents. The value of $m(N(t))$ decays sharply
when the population $N(t)$ increases.
 Following \cite{Ar}, for $m(N)$ we adopt
 \begin{equation} \label{emedeene} m(N)=\left\{\begin{array}{c}
     m_0\qquad \mbox{if }\; N\leq 1 \\
     m_0N^{-\gamma}\quad \mbox{if }\; N>1
   \end{array}\right. ;\quad \gamma
   > 1 \, . \end{equation}

  We will assume that $\gamma >1$ and
   for some calculations we take  $\gamma=8.25$. The reason for that is that there is numerical evidence, see \cite{Ar}, that for this value of the parameter we have chaotic behavior.

\item[(e)] Finally for the survival probability $S(a)$, again following \cite{Ar}, we consider
a linear function:
$$S(a)=1-\frac{a}{A_1},\quad \mbox{ if }\quad 0\leq a\leq A_1,\quad \mbox{and }\quad S(a)=0
\quad\mbox{elsewhere}\, . $$
\end{enumerate}
\begin{Obs} \label{nota}
Another choice of functions for $S(a)$, for instance $S(a)=\exp(-\kappa a)$,
with $\kappa
>0$, are also usual in the literature.
It would be interesting to test the model given by {\rm
(\ref{rato})} replacing the linear function at {\rm (e)} by a
exponential one.
\end{Obs}
Let us describe how the integral equation
 $$N(t)=\int_{A_0}^{A_1}N(t-a)m(N(t-a))m_\rho(t-a)S(a)da\,\quad \mbox{arises}\, .$$
For $N(t)$, the contribution of females of age in between
$[a,a+\Delta a]\subset  [A_0,A_1]$  is
$$\mbox{fem}(t-a)\times (\mbox{reprod.
rate}(t-a)\times (\mbox{ season factor}(t-a)\times (\mbox{prob.
survive})\times \ell ([a,a+\Delta a])$$
$$=N(t-a)\times m(N(t-a))\times m_\rho(t-a)\times S(a)\times \Delta
a\, ,$$
 where $\ell(J)$ is the length of the interval $J$ and fem$(t)$
 is the number of females at time $t$.
Here we assume that a female of age
near $A_1$ can reproduce and $\Delta a$ is small.
Taking a partition $\{a_0=t-A_1,a_1,\ldots, a_n=t-A_0\}$ of the
interval $[t-A_1,t-A_0]$ we find
$$N(t)\approx\sum_{j=0}^{n-1}N(t-a_j) m(N(t-a_j)) m_\rho(t-a_j)
S(a_j) \Delta a_j\, , \quad \mbox{where}\quad\Delta a_j=(a_{j+1}-a_j)$$
Letting $n\to\infty$ we get at the limit the integral equation
given by (\ref{rato}).

\medbreak

\subsection{The discrete model.}
 There is no special reason to prefer the
continuous model above to its discretization: most of the quantities
involved, as $N(t)$ and $m(N)$, are by nature of discrete type.
Moreover, from the experimental point of view, it is more natural to
split the year on days and even in groups of days since it
is very difficult to monitor $N(t)$ experimentally. Hence, we
assume that the year is split into $p$ equal parts.

Since the
expected value of survival is bounded by $A_1=2$ years we will study
 the evolution of $N(t)$ for discrete values of $t$,
modeling the period $[0,A_1]$ as a vector of $A_1p+1$ real entrances,
from $t=0$ at the initial time of the first year, to $t=2p$
corresponding to $A_1=2$ the final time of the second year.

In this case the probability of survival at age $\frac{j}{p}$ is
given by $$S(j)=1-\frac{j}{2p}\, ,\quad j=0,1,2\ldots ,2p\,.$$ where
$A_1p=2p$.
It is also convenient to consider
$S(j)=1-\frac{j}{2p+1}$. This takes into account the case where $S(2p)>0$,
i.e., when these animals can reproduce till the final of their lives.

\medbreak

Given an initial vector value $(N_0,N_1,N_2,\ldots, N_{A_1p-1},
N_{A_1p})\in\R^{A_1p+1}\,$, the evolution of $N(t)=N_t$, $t\in\N$, is
governed by

\begin{equation} \label{clave}
N_t=\sum_{h=A_0\,p}^{A_1\,p-1}N_{t-h} m(N_{t-h}) m_{\rho}(t-h) S(h)
\Delta h
\end{equation}
$$ =\frac{1}{p}\sum_{h=A_0\,p}^{2\,p-1}N_{t-h} m(N_{t-h}) m_{\rho}(t-h) S(h)\, .$$

Next we explain the choices in  equation (\ref{clave}).
\begin{enumerate}
\item
 We take $A_0p= \left[\frac{50\times
p}{365}\right]\approx 14$  which corresponds to the age at which the
females have their first litter of pups (about 50 days). Note that if $p=100$ then
$A_0=0.14$ corresponds to $51$ days.

\item We take  $\Delta
h=\frac{1}{p}$ years that corresponds to the length of the unit interval
in which we split the year. When $p=100$ this gives $\Delta
h=\frac{1}{100} \mbox{ years}=3.65$ days.

\end{enumerate}
Note that the value of $N$ at $t$ depends only on the values of $N$
in $[t - A_1; t - A_0]$. Thus, the knowledge of $N_t$ for $t \in
[-A_1 p,0]$ (two years of observation) enables us to predict $N_t$
for $t\in[0,A_0 p]$. When $p=100$ and $A_0=0.14$ this means that the
knowledge of $(N_0,N_1,N_2,\ldots, N_{200})$ enables us to compute
$N_{201},\ldots ,N_{214}$.
 Recursively we may compute
$N_j$ for all $j\geq 0$.


\section{The dynamical system}
Equation (\ref{clave}) defines a discrete dynamical system in
$\R^{2p+1}$ as follows:
$$(N_0,N_1,\ldots ,N_{2p})\mapsto T(N_0,N_1,\ldots ,N_{2p})=
(N_{p},N_{p+1},\ldots ,N_{3p})\,,$$ where we have used that $A_1=2$ and $T:\R^{2p+1}\to\R^{2p+1}$
 is defined recursively by equation (\ref{clave}) for $t=2p+1,\ldots, 3p$.

In order to describe theoretical properties of a system given
by the discretized version (\ref{clave})
 of Yoccoz-Birkeland equation (\ref{rato}),
let us assume the following restrictions that weaken those given by
conditions (a)--(e) described before. Doing this allows to apply
the conclusions to different species respecting equation (\ref{clave})
and those restrictions. In particular, these conclusions
will apply to the original system modeling {\it Microtus Epiroticus}.
\begin{enumerate}
\item $m(N)$ is a continuous function, $m:\R^+\to\R^+$,
\item there is $m_0\in\R^+$ such that
\begin{equation} \label{eq2} \left\{\begin{array}{l}
  m_0\geq m(N)\geq m_0/2\qquad\qquad\qquad\qquad\quad\;\, \mbox{ if }\quad N\leq 1\qquad\mbox{and} \\
  m_0N^{-\gamma}\geq m(N)\geq
  \min\{\frac{m_0}{2},m_0\cdot N^{-\gamma}\}\quad\;\!\mbox{ if }\quad N> 1\,\,,
\end{array}\right.
\end{equation}
\item $0\leq m_\rho(t)\leq 1$ (so that we now allow $0<m_\rho(t)<1$
for certain values of $t$),
\item There is $\epsilon \geq 0$ such that $m_\rho(t)=1$ for $t$ in an interval of length
$1-\rho-\epsilon>0$,
\item  $0<2A_0<A_1$ and $A_0+1<A_1$ (this means that in average each individual has at least
two opportunities to reproduce),
\item
Defining $c_0$ as
$c_0=\frac{1}{p}\left(\sum_{h=A_0p+(\rho+\epsilon)p}^{A_0p+p}S(h)\right)$
we require $c_0\,m_0>2$. From the definition of $S(h)$ it follows
\begin{equation} \label{eq3}
c_0=\frac{1}{p}\left(\sum_{h=A_0p+(\rho+\epsilon)p}^{A_0p+p}(1-\frac{h}{pA_1})\right)=
(1-\rho-\epsilon)\left(1-
\left(\frac{(1+\rho+\epsilon)+2A_0}{2A_1}\right)\right)\,.
\end{equation}
The condition $c_0\, m_0>2$ will imply, as we will see below
in Proposition \ref{noextingue}, that
the population does not extinguish, that is, it has the permanence property
(see Definition \ref{permanente}).

\item The exponent $\gamma$ satisfies $\gamma> 1$.
\end{enumerate}

The following proposition shows that a dynamical system governed by equation
(\ref{clave})
 and respecting the restrictions $1.$ to $7.$ above is bounded.
\begin{Prop} \label{acotado}
For all $t=1,2,\ldots ,A_0p$ we have $N_t\leq N_{max}:=
m_0\left(\frac{(A_1-A_0)^2}{2A_1}\right)$.

\end{Prop}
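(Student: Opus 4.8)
\emph{Proof proposal.} The substantive point is that the reproductive flux $N\mapsto N\,m(N)$ is bounded above by $m_0$ \emph{uniformly in $N$}; this is exactly what turns the assertion into an a priori bound, valid no matter how large the entries of the initial vector are. So the first thing I would do is record the pointwise inequality $N\,m(N)\le m_0$ for every $N\ge 0$. If $0\le N\le 1$, the upper estimate in (\ref{eq2}) gives $m(N)\le m_0$, hence $N\,m(N)\le N\,m_0\le m_0$. If $N>1$, the upper estimate in (\ref{eq2}) gives $m(N)\le m_0N^{-\gamma}$, hence $N\,m(N)\le m_0 N^{1-\gamma}\le m_0$, the last step because restriction 7 ($\gamma>1$) makes the exponent $1-\gamma$ negative while $N>1$. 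This is the only place where the fast decay of $m$ and the restriction $\gamma>1$ are genuinely used.

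Next I would fix $t\in\{1,\dots,A_0p\}$ and observe that in (\ref{clave}) the summation index $h$ runs over $A_0p,\dots,2p-1$, so every argument satisfies $t-h\le t-A_0p\le 0$; thus each $N_{t-h}$ appearing is an entry of the prescribed initial data and no recursively computed value intervenes. Substituting $N_{t-h}\,m(N_{t-h})\le m_0$ into (\ref{clave}) and using $0\le m_\rho(t-h)\le 1$ (restriction 3) together with $S(h)\ge 0$, one gets
$$N_t=\frac1p\sum_{h=A_0p}^{2p-1}N_{t-h}\,m(N_{t-h})\,m_\rho(t-h)\,S(h)\;\le\;\frac{m_0}{p}\sum_{h=A_0p}^{2p-1}S(h).$$

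It then remains to evaluate the last sum. With $S(h)=1-\frac{h}{2p}=1-\frac{h}{A_1p}$, summing the arithmetic progression gives
$$\frac1p\sum_{h=A_0p}^{2p-1}S(h)=\frac{(A_1-A_0)\bigl(p(A_1-A_0)+1\bigr)}{2A_1},$$
which coincides, up to a term of order $1/p$, with the Riemann integral $\int_{A_0}^{A_1}\bigl(1-a/A_1\bigr)\,da=\frac{(A_1-A_0)^2}{2A_1}$ of the decreasing function $a\mapsto 1-a/A_1$ on $[A_0,A_1]$. Multiplying by $m_0$ yields $N_t\le m_0\frac{(A_1-A_0)^2}{2A_1}=N_{max}$.

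I do not anticipate a genuine obstacle here: the proposition is elementary, and the only computation requiring a little care is the Riemann-sum comparison in the last step (where one identifies the finite sum with the integral defining $N_{max}$). All the actual content resides in the uniform estimate $N\,m(N)\le m_0$, and it is precisely there that the hypotheses $m(N)\le m_0N^{-\gamma}$ and $\gamma>1$ are exploited; everything else is bookkeeping on the indices and an arithmetic-progression computation.
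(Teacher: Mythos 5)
Your approach matches the paper's exactly: establish the uniform bound $N\,m(N)\le m_0$ by the two cases $N\le 1$ and $N>1$ (using $\gamma>1$), invoke $0\le m_\rho\le 1$, and reduce the claim to an arithmetic-progression estimate of $\frac{m_0}{p}\sum_h S(h)$. The only substantive input is the uniform bound, and you identify it and prove it correctly.

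The last step, however, is where the argument is loose, and it is worth flagging because the same soft spot is present in the paper. First, your displayed identity for the sum is off by a factor of $p$: with $S(h)=1-\frac{h}{A_1p}$ one has
$$\frac1p\sum_{h=A_0p}^{A_1p-1}S(h)\;=\;\frac{(A_1-A_0)\bigl((A_1-A_0)p+1\bigr)}{2A_1\,p}\;=\;\frac{(A_1-A_0)^2}{2A_1}+\frac{A_1-A_0}{2A_1p}\,,$$
not $\frac{(A_1-A_0)(p(A_1-A_0)+1)}{2A_1}$. Second, and more importantly, the sum does \emph{not} fall below the integral: $a\mapsto 1-a/A_1$ is decreasing, so the left-endpoint Riemann sum \emph{exceeds} $\int_{A_0}^{A_1}(1-a/A_1)\,da$, by exactly the $\frac{A_1-A_0}{2A_1p}$ shown above. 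Consequently ``coincides up to $O(1/p)$'' does not give the inequality $N_t\le N_{max}$; what your computation actually delivers is $N_t\le N_{max}+\frac{m_0(A_1-A_0)}{2A_1p}$. The paper's own closing chain of inequalities contains a couple of index slips (the cardinality of $\{A_0p,\dots,A_1p\}$ is $(A_1-A_0)p+1$, not $(A_1-A_0)p$, and the displayed formula for $\sum_h h$ is for the range starting at $A_0p+1$), which is what makes its final $\le$ appear to close when the honest sum slightly overshoots $N_{max}$. So the gap is inherited from the source; to make the exact constant $N_{max}$ literally correct one would need either to adjust $S$ (e.g.\ to the alternative $1-h/(A_1p+1)$ with a sharper estimate) or to state the bound with the harmless $O(1/p)$ term. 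Your core argument is sound; just do not present the Riemann-sum comparison as if it yielded an inequality in the needed direction.
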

\begin{proof}
Since $\gamma> 1$, inequalities (\ref{eq2}) imply $N_jm(N_j)\leq
m_0$ for all $j$. Moreover from $m_\rho\leq 1$ we obtain
$$N_t=\sum_{h=A_0p}^{A_1p-1}N(t-h)
m(N(t-h)) m_\rho(t-h) S(h) \Delta h\leq $$
 $$\sum_{h=A_0p}^{A_1p}m_0
S(h) \frac{1}{p}=\frac{m_0}{p}\sum_{h=A_0p}^{A_1p} S(h)=
\frac{m_0}{p}\sum_{h=A_0p}^{A_1p}(1-\frac{h}{pA_1})=$$
$$\frac{m_0}{p}\left((A_1-A_0)p-\frac{A_1p(A_1p+1)-A_0p(A_0p+1)}{2A_1p}\right)\leq$$
$$m_0\left((A_1-A_0)-\frac{A_1(A_1+1/p)-A_0(A_0+1/p)}{2A_1}\right)\leq$$
$$m_0\left((A_1-A_0)-\frac{A_1^2-A_0^2}{2A_1}\right)
= m_0\left(\frac{(A_1-A_0)^2}{2A_1}\right)\, .$$
\end{proof}
By induction we obtain that for all $t\geq 0$, $N_t\leq N_{max}$.
\begin{Obs}
For the values $A_1=2$, $A_0=0.18$, $m_0=50$ we have $N_{max}\approx
41.4\,$.
\end{Obs}
\subsection{Permanence.}
In this section we verify that the population given by equation (\ref{clave})
 and respecting the restrictions $1.$ to $7.$ above, in particular conditions (\ref{eq2}) and (\ref{eq3}), does not extinguish.
\begin{Df} \label{permanente}
We say that a system $P(t)$ modeling the evolution of a population is {\em
permanent}, or satisfies the {\em permanence} property, if for any
positive initial vector value $P_0$, there is
$\epsilon>0$ such that the solution $P(t)$ satisfies $$\liminf_{t\geq
0}P(t)\geq \epsilon\, .$$
\end{Df}
If a given system is permanent then, assuming that the environmental conditions
do not change
in time, the associated population will not extinguish.
Thus, concerning with population dynamics this property is very important.

 The next proposition shows that the system under study satisfies the permanence property.

\begin{Prop} \label{noextingue}
If $i(N)=\min\{N_t,\, t\in [-pA_1,0]\}>0$ then $N_t>0$ for all
$t=0,1,\ldots ,A_0p$. Moreover,
\begin{itemize}
\item If $i(N)\leq N_{max}^{1-\gamma}$ then $N_t\geq
\frac{c_0\,m_0}{2} i(N)>i(N)$, $t\in [0,pA_0]$.
\item If $i(N)\geq N_{max}^{1-\gamma}$ then $N_t\geq \frac{c_0\,m_0}{2}
N_{max}^{1-\gamma}$, $t\in [0,pA_0]$.
\end{itemize}
\end{Prop}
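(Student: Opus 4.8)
The plan is to read both conclusions off the recursion (\ref{clave}) directly, exploiting the two–sided control of the map $x\mapsto x\,m(x)$ furnished by (\ref{eq2}) together with the way the summation window in the definition of $c_0$ was chosen.

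\emph{Positivity.} I would first note that for $t\in\{0,1,\dots,A_0p\}$ every index $t-h$ occurring in (\ref{clave}), where $h$ runs over $[A_0p,2p-1]$, satisfies $t-h\in[-(2p-1),0]\subset[-pA_1,0]$, so $N_{t-h}\ge i(N)>0$; since $m$ takes only positive values, each summand is $\ge 0$, and at least one of them is $>0$ because on the sub–window $h\in[A_0p+(\rho+\epsilon)p,\,A_0p+p]$ one has $S(h)>0$ (by the definition of $S$, as $A_0<1$ forces $h<2p$) and $m_\rho(t-h)=1$ (by restriction 4 and the very choice of that window), while $c_0m_0>2$ forces $c_0>0$. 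Hence $N_t>0$ on $[0,A_0p]$. Throughout the quantitative part I will also use $N_{t-h}\le N_{max}$; by Proposition \ref{acotado} this holds for every index $\ge 2p+1$, hence along any orbit after finitely many iterates, and in particular on the attractor $\Lambda$.

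\emph{The key inequality and the estimate.} The heart of the argument is the elementary bound
\[
x\,m(x)\ \ge\ \tfrac{m_0}{2}\,\min\{x,\,N_{max}^{1-\gamma}\}\qquad(0<x\le N_{max}),
\]
which for $x\le 1$ is just $m(x)\ge m_0/2$, and for $1<x\le N_{max}$ follows from $m(x)\ge\min\{m_0/2,\,m_0x^{-\gamma}\}$ together with $x^{1-\gamma}\ge N_{max}^{1-\gamma}$ and $\tfrac{m_0}{2}x\ge\tfrac{m_0}{2}\ge\tfrac{m_0}{2}N_{max}^{1-\gamma}$. Inserting this with $x=N_{t-h}\in[i(N),N_{max}]$ into (\ref{clave}), discarding every term except those with $h\in[A_0p+(\rho+\epsilon)p,\,A_0p+p]$ (on which $m_\rho(t-h)=1$), and using $\min\{N_{t-h},N_{max}^{1-\gamma}\}\ge\min\{i(N),N_{max}^{1-\gamma}\}$, one gets
\[
N_t\ \ge\ \frac1p\sum_{h=A_0p+(\rho+\epsilon)p}^{A_0p+p}\tfrac{m_0}{2}\,\min\{i(N),N_{max}^{1-\gamma}\}\,S(h)\ =\ \tfrac{c_0m_0}{2}\,\min\{i(N),N_{max}^{1-\gamma}\},
\]
by the formula (\ref{eq3}) for $c_0$. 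If $i(N)\le N_{max}^{1-\gamma}$ the minimum equals $i(N)$, so $N_t\ge\tfrac{c_0m_0}{2}i(N)$, which exceeds $i(N)$ since $c_0m_0>2$; if $i(N)\ge N_{max}^{1-\gamma}$ the minimum equals $N_{max}^{1-\gamma}$, giving $N_t\ge\tfrac{c_0m_0}{2}N_{max}^{1-\gamma}$. This is exactly the asserted dichotomy.

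\emph{Main obstacle.} The only genuinely delicate point is the bookkeeping behind the claim ``$m_\rho(t-h)=1$ for every $h$ in the window $[A_0p+(\rho+\epsilon)p,\,A_0p+p]$'' for the $t$ under consideration: one must check, from the piecewise definition of $m_\rho$ and restriction 4, that as $h$ sweeps this interval of length $(1-\rho-\epsilon)p$ the argument $t-h$ stays inside the ``summer'' stretch on which $m_\rho\equiv1$ — which is precisely what that window, and hence the expression (\ref{eq3}) for $c_0$, were tailored to guarantee. Everything else is the routine substitution carried out above.
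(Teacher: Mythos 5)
Your argument is correct and is essentially the paper's own proof: both lower-bound the summands via (\ref{eq2}), restrict the sum in (\ref{clave}) to the window $h\in[A_0p+(\rho+\epsilon)p,\,A_0p+p]$ on which $m_\rho\equiv1$, and read off the $c_0$ factor from (\ref{eq3}), with positivity then following from $c_0m_0>2$. The only difference is cosmetic---you fold the paper's case split ($N_{t-h}\le1$ versus $N_{t-h}>1$) into the single inequality $x\,m(x)\ge\frac{m_0}{2}\min\{x,N_{max}^{1-\gamma}\}$, and you are somewhat more explicit than the paper about the implicit use of $N_{t-h}\le N_{max}$ and the $m_\rho$ bookkeeping, both of which the paper invokes silently.
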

\begin{proof}
If $N(t-h)\leq 1$ then, by (\ref{eq2}), $$N(t-h)m(N(t-h))\geq
N(t-h)\frac{m_0}{2}\geq i(N)\frac{m_0}{2}\, .$$ Otherwise $N(t-h)>1$
and then, again by (\ref{eq2}), $$N(t-h)m(N(t-h))\geq
\min\{N(t-h)^{1-\gamma}\frac{m_0}{2}, N(t-h)\frac{m_0}{2}\}\geq
\min\{N_{max}^{1-\gamma}\frac{m_0}{2}, i(N)\frac{m_0}{2}\}\, .$$
Hence we have
$$N_t=\sum_{h=A_0p}^{2p-1}N(t-h)
m(N(t-h)) m_\rho(t-h) S(h) \Delta h\geq $$
$$\frac{1}{p}\min\{N_{max}^{1-\gamma}\frac{m_0}{2},
i(N)\frac{m_0}{2}\}\sum_{h=A_0p}^{2p-1}m_\rho(t-h) S(h)\geq$$
$$\min\left\{N_{max}^{1-\gamma}\frac{m_0}{2},
i(N)\frac{m_0}{2}\right\}\,\frac{1}{p}\left(\sum_{h=A_0p+(\rho+\epsilon)p}^{A_0p+p}
(1-\frac{h}{pA_1})\right)=
\min\left\{N_{max}^{1-\gamma}\frac{c_0m_0}{2},
i(N)\frac{c_0m_0}{2}\right\}\, .$$
 Since, by (\ref{eq3}), $c_0\,m_0>2$ we get by induction that $N(t)>0$ for all $t\in[ 0,A_0p]$.
 \end{proof}
 Clearly Proposition \ref{noextingue} implies that $N_t>0$ for all $t\geq 0$.
 \begin{Cor} \label{todo t}
 There is $t_0>0$, depending on the initial vector value, such that we have
$N(t)\geq \frac{c_0m_0}{2}N_{max}^{1-\gamma}$, $t\geq t_0$.
\end{Cor}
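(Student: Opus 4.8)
The plan is to bootstrap Proposition \ref{noextingue} forward in time. That proposition is stated for the block $t\in[0,A_0p]$ under the hypothesis that the minimum $i(N)$ of $N$ over the preceding two years $[-pA_1,0]$ is positive; but the mechanism is autonomous, so the same estimate applies to any consecutive block of length $A_0p$ once we know that $N$ is positive on the two years immediately before it. Concretely, I would first invoke Proposition \ref{noextingue} together with the remark ``Clearly Proposition \ref{noextingue} implies that $N_t>0$ for all $t\ge 0$'' to conclude that $N_t>0$ for every $t\ge -pA_1$ (the initial vector value being positive by hypothesis), hence every ``window'' $[\tau-pA_1,\tau]$ with $\tau\ge 0$ has a positive minimum $i_\tau:=\min\{N_s: s\in[\tau-pA_1,\tau]\}$.

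Next I would iterate the dichotomy in Proposition \ref{noextingue}. Define $m_k=\min\{N_t: t\in[(k-1)A_0p,\,kA_0p]\}$ and, more relevantly, the rolling two-year minima $i_k$ taken over windows ending at the right endpoint of the $k$-th block. By Proposition \ref{noextingue} applied to the $k$-th block (legitimate since $2A_0<A_1$ and $A_0+1<A_1$, restriction 5, guarantee a block of length $A_0p$ fits inside the relevant range and that the formula for $c_0$ in (\ref{eq3}) is the one controlling the lower bound), we get: if $i_k\le N_{max}^{1-\gamma}$ then every $N_t$ in the next block is $\ge \frac{c_0m_0}{2}\,i_k > i_k$ since $c_0m_0>2$; and if $i_k\ge N_{max}^{1-\gamma}$ then every $N_t$ in the next block is $\ge \frac{c_0m_0}{2}\,N_{max}^{1-\gamma}$. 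So as long as we are in the first (``small population'') regime, the block-minima strictly increase by the fixed factor $c_0m_0/2>1$; therefore after finitely many blocks — say after time $t_0$, which depends on the initial data through how small $i(N)$ starts out — the rolling two-year minimum must exceed the threshold $N_{max}^{1-\gamma}$. Once that happens we are permanently in the second regime (the lower bound $\frac{c_0m_0}{2}N_{max}^{1-\gamma}$ feeds back as a valid value of $i$ for all subsequent windows, and $\frac{c_0m_0}{2}N_{max}^{1-\gamma}>N_{max}^{1-\gamma}$ keeps us there), which yields $N(t)\ge \frac{c_0m_0}{2}N_{max}^{1-\gamma}$ for all $t\ge t_0$.

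The one point that needs a little care — and is the main obstacle — is the geometric-growth argument crossing the threshold in finitely many steps: one must check that the multiplicative gain $c_0m_0/2$ is genuinely bounded away from $1$ (which is exactly restriction 6, $c_0m_0>2$, via (\ref{eq3})), and that the ``window'' used to define $i_k$ always lies in a region where $N$ has already been bounded below, so that the recursion does not stall because a single stale small value in a two-year window keeps resetting the bound. This is handled by noting that after $\lceil pA_1/(A_0p)\rceil$ consecutive blocks of strict increase, the entire two-year window consists of values produced under the improved bound, so the window minimum itself has grown by the factor $(c_0m_0/2)$ at least once per full refresh; hence the number of blocks needed is at most logarithmic in $N_{max}^{1-\gamma}/i(N)$ times $\lceil A_1/A_0\rceil$, a finite quantity depending only on the initial vector value. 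Taking $t_0$ to be that many blocks completes the proof.
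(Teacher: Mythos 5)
Your argument is correct and is the natural bootstrapping of Proposition \ref{noextingue} that the paper's presentation (which states this corollary without an explicit proof) clearly intends: iterate the dichotomy, gain the factor $c_0m_0/2>1$ while below threshold, cross $N_{max}^{1-\gamma}$ in finitely many steps, and then stay above $\frac{c_0m_0}{2}N_{max}^{1-\gamma}$ by the second bullet. You also correctly isolated and resolved the one real subtlety --- though your phrase ``the block-minima strictly increase by the fixed factor $c_0m_0/2$'' is technically off, since the two-year window minimum $i_k$ is held at its old value until the window has fully refreshed (roughly $\lceil A_1/A_0\rceil$ blocks), the lower bound does gain a factor $c_0m_0/2$ once per refresh, which still produces geometric growth and a finite $t_0$ as you conclude.
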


\subsection{Existence of fixed points.}

The following corollary is a straightforward consequence of Propositions
\ref{acotado} and \ref{noextingue}.
\begin{Cor} \label{compacto}
 If $N_t>0$ for all $t\in[-A_p,0]$ then there is $t_0>0$ such that $\frac{c_0m_0}{2}N_{max}^{1-\gamma}\leq
N_t\leq N_{max}$ for $t\geq t_0$. In particular $T$ maps the compact
set
$$\mathcal{K}=\left[\frac{c_0m_0}{2}N_{max}^{1-\gamma},N_{max}\right]^{pA_1+1}\quad \mbox{
into itself}\, .$$
\end{Cor}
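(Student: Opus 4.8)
The plan is to combine Propositions \ref{acotado} and \ref{noextingue} to trap the orbit inside $\mathcal{K}$ after finitely many steps, and then to observe that the coordinates of $T(N_0,\ldots,N_{2p})$ are themselves values $N_t$ produced by the recursion (\ref{clave}), so the bounds transfer directly to the image vector. First I would invoke Proposition \ref{acotado} (and the induction remark following it) to get $N_t\leq N_{max}$ for all $t\geq 0$; this is the upper bound and requires no new work. Next I would apply Corollary \ref{todo t}, which already packages the lower bound: there is $t_0>0$ (depending on the initial vector) with $N(t)\geq \frac{c_0 m_0}{2}N_{max}^{1-\gamma}$ for all $t\geq t_0$. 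Taking $t\geq t_0$ then gives $\frac{c_0 m_0}{2}N_{max}^{1-\gamma}\leq N_t\leq N_{max}$, which is the first assertion.

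For the second assertion, recall that by definition $T(N_0,\ldots,N_{2p})=(N_p,N_{p+1},\ldots,N_{3p})$, i.e. the $i$-th coordinate of $T^k$ applied to the initial vector is $N_{kp+i}$ for $i=0,\ldots,2p$. Hence for $k$ chosen large enough that $kp\geq t_0$, every coordinate index $kp,kp+1,\ldots,kp+2p$ exceeds $t_0$, so each coordinate of $T^k(N_0,\ldots,N_{2p})$ lies in the interval $\left[\frac{c_0 m_0}{2}N_{max}^{1-\gamma},N_{max}\right]$; that is, $T^k(N_0,\ldots,N_{2p})\in\mathcal{K}$. It remains to check that $T$ maps $\mathcal{K}$ into itself: if the input vector already has all coordinates in $\left[\frac{c_0 m_0}{2}N_{max}^{1-\gamma},N_{max}\right]$, then in particular $i(N)=\min\{N_t : t\in[-pA_1,0]\}\geq \frac{c_0 m_0}{2}N_{max}^{1-\gamma}>0$, so Proposition \ref{noextingue} (its second bullet, valid once $i(N)\geq N_{max}^{1-\gamma}$, which holds here since $\frac{c_0 m_0}{2}>1$ by $c_0 m_0>2$) yields $N_t\geq \frac{c_0 m_0}{2}N_{max}^{1-\gamma}$ for $t\in[0,A_0p]$, and Proposition \ref{acotado} yields $N_t\leq N_{max}$ on the same range; iterating the recursion forward through $t=A_0p+1,\ldots,3p$ keeps both bounds (the lower bound is preserved because each newly computed $N_t$ again satisfies the hypothesis $i(N)\geq N_{max}^{1-\gamma}$ on the relevant window, and the upper bound is preserved by the induction already noted). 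Therefore the coordinates $N_p,\ldots,N_{3p}$ of $T(N_0,\ldots,N_{2p})$ all lie in the required interval, i.e. $T(\mathcal{K})\subseteq\mathcal{K}$.

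The only mildly delicate point — the main obstacle, such as it is — is making the forward iteration argument for $T(\mathcal{K})\subseteq\mathcal{K}$ airtight: when we compute $N_t$ for $t$ ranging over $A_0p+1,\ldots,3p$, the window $[t-A_1p,t-A_0p]$ of indices feeding into (\ref{clave}) gradually slides off the original block $[-pA_1,0]$ and onto freshly computed values, so one must verify that those fresh values already satisfy the lower bound $\geq \frac{c_0 m_0}{2}N_{max}^{1-\gamma}\geq N_{max}^{1-\gamma}$ before they are reused. This is exactly the inductive structure inside the proof of Proposition \ref{noextingue}, so it suffices to note that the hypothesis $i(N)\geq N_{max}^{1-\gamma}$ is reproduced at each stage and quote that proposition; no genuinely new estimate is needed.
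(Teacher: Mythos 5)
Your proof is correct and takes essentially the same approach the paper implies: the paper leaves this Corollary as a ``straightforward consequence'' of Propositions \ref{acotado} and \ref{noextingue} (together with Corollary \ref{todo t}), and your argument supplies exactly those details, combining the uniform upper bound from Proposition \ref{acotado}, the lower bound from Corollary \ref{todo t}, and an induction via the second bullet of Proposition \ref{noextingue} (using $c_0 m_0>2$ to keep $i(N)\geq N_{max}^{1-\gamma}$ at each stage) to get $T(\mathcal{K})\subset\mathcal{K}$.
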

\qed

Now set \begin{equation} \label{hache} H:=\left\{N=(N_0,N_1,\ldots
,N_{2p})\in\R^{2p+1}\,:\, \forall\, j=0,1,\ldots
2p:\,N_j>0\right\}\, .\end{equation}
Observe that Proposition \ref{noextingue} together with Corollary \ref{todo t} imply that $T$ maps $H$ into itself.

Next we prove that $T:H\to H$ is Lipschitz.
\begin{Lem} \label{Lipschitz}
$T:H\to H$ is a Lipschitz function.
\end{Lem}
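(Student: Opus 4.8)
The plan is to show that $T$ is Lipschitz on $H$ by reducing to the Lipschitz property of the coordinate maps defined by equation (\ref{clave}). Recall that $T(N_0,\ldots,N_{2p})=(N_p,\ldots,N_{3p})$, where each new coordinate $N_{2p+k}$ for $k=1,\ldots,p$ is computed recursively by (\ref{clave}) from the previously computed values. Since a composition of finitely many Lipschitz maps is Lipschitz, and each step produces one new coordinate as a function of $2p+1$ already-available coordinates, it suffices to prove that the map
$$\Phi(x_{A_0p},\ldots,x_{A_1p-1})=\frac{1}{p}\sum_{h=A_0p}^{2p-1} x_h\, m(x_h)\, m_\rho(\cdot)\, S(h)$$
is Lipschitz on the relevant domain. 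Here I would be careful to record that, by Corollary \ref{compacto} (or at least by Proposition \ref{acotado}), after finitely many steps all coordinates lie in the compact set $\mathcal{K}=[\tfrac{c_0m_0}{2}N_{max}^{1-\gamma},N_{max}]^{pA_1+1}$; but for the statement ``$T:H\to H$ is Lipschitz'' I actually only need local Lipschitz behaviour, or Lipschitz on bounded subsets of $H$ bounded away from $0$ — I should state the lemma and proof in whichever of these forms the paper intends, and I will assume it means Lipschitz on $H$ in the sense of locally Lipschitz (equivalently Lipschitz on each $\mathcal K$-type box), which is what the later hyperbolicity arguments require.

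The key step is then to show $g(x):=x\,m(x)$ is Lipschitz on compact subintervals of $(0,\infty)$. For $x\le 1$ we have $g(x)=x\,m(x)$ with $m$ continuous and bounded between $m_0/2$ and $m_0$; since $g$ is a product of the Lipschitz function $x$ and a continuous function on a compact interval, and (using the explicit form (\ref{emedeene}), $m$ is even constant there in the original model) $g$ is Lipschitz on $[0,1]$. For $x>1$, from (\ref{eq2}) (or directly from (\ref{emedeene})) $g(x)=x\,m(x)$ with $m(x)\le m_0 x^{-\gamma}$, so $g(x)\le m_0 x^{1-\gamma}$, and on any interval $[1,N_{max}]$ the derivative (where it exists) of $m_0 x^{1-\gamma}$ is bounded, hence $g$ is Lipschitz there; one checks the two pieces match continuously at $x=1$, so $g$ is Lipschitz on $[0,N_{max}]$ with some constant $L_0$ depending on $m_0,\gamma,N_{max}$. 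Consequently each summand $x_h\,m(x_h)\,m_\rho(t-h)\,S(h)$ is Lipschitz in $x_h$ with constant at most $L_0\cdot 1\cdot 1=L_0$ (using $0\le m_\rho\le 1$ and $0\le S(h)\le 1$), and summing over the $h$-range and dividing by $p$ gives that $\Phi$ is Lipschitz with constant at most $\frac{1}{p}\cdot (2p-A_0p)\cdot L_0\le 2L_0$ with respect to, say, the sup norm. Iterating this one-coordinate-at-a-time construction $p$ times yields a global Lipschitz constant for $T$ of the form $(\text{const})^p$.

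The main obstacle — really the only subtle point — is the behaviour of $g(x)=x\,m(x)$ near $x=0$: if one insisted on a single global Lipschitz constant on all of $H$, no such constant exists because $m(x)$ is bounded but $H$ is not bounded away from $0$ in a uniform way across all initial data; however this is harmless, since Proposition \ref{noextingue} and Corollary \ref{todo t} guarantee that the orbit enters and stays in the compact box $\mathcal{K}$ bounded away from $0$, so $T$ restricted to the forward-invariant region where the dynamics actually lives is genuinely Lipschitz with an explicit constant. I would therefore phrase the proof as: (i) reduce to one coordinate step; (ii) prove $g$ is Lipschitz on $[0,N_{max}]$ by splitting at $x=1$ and bounding $\frac{d}{dx}(m_0x^{1-\gamma})$; (iii) assemble the bound for $\Phi$ using $m_\rho,S\in[0,1]$ and the triangle inequality; (iv) compose $p$ times; and (v) remark that on the invariant compact set $\mathcal K$ of Corollary \ref{compacto} this gives an honest Lipschitz constant, which is all that is needed in the sequel. \fin
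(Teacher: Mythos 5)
Your overall plan — reduce to the one-coordinate update, show $g(x):=x\,m(x)$ is Lipschitz, combine using $m_\rho, S\in[0,1]$, then iterate — matches the paper's strategy. But the central claim in your last paragraph is wrong, and it leads you to prove a weaker statement than the lemma actually asserts.

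You say the ``only subtle point'' is the behaviour of $g(x)=x\,m(x)$ near $x=0$, and that ``if one insisted on a single global Lipschitz constant on all of $H$, no such constant exists.'' This is false for the model at hand: on $(0,1]$ the explicit form~(\ref{emedeene}) gives $m(N)=m_0$, so $g(x)=m_0x$ is \emph{linear} there, with derivative identically $m_0$. Nothing blows up near $0$. On $[1,\infty)$, $g(x)=m_0x^{1-\gamma}$ with $\gamma>1$ has $|g'(x)|=m_0(\gamma-1)x^{-\gamma}\le m_0(\gamma-1)$, so $g$ is globally Lipschitz on all of $(0,\infty)$ with constant $L=\max\{m_0,\,m_0(\gamma-1)\}$. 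Consequently $T:H\to H$ really is globally Lipschitz, which is exactly what the paper proves (via the three cases $N,N'\le1$; $N,N'\ge1$; and the mixed case, with the Mean Value Theorem in the second). Your retreat to ``Lipschitz on each $\mathcal K$-type box'' and the appeal to Corollary~\ref{compacto} are therefore unnecessary and, more importantly, they silently downgrade the lemma: ``$T:H\to H$ is Lipschitz'' is a statement about $H$, not about the invariant compact box. (If you were worried about the weaker hypothesis~(\ref{eq2}), which only bounds $m$ rather than pinning it down, the honest remark is that one needs $m$ itself Lipschitz on $(0,1]$, which the explicit model provides; restricting to $\mathcal K$ would not rescue an only-continuous $m$ either.)

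A smaller point: your Lipschitz constant $(\mathrm{const})^p$ from composing $p$ single-coordinate steps is far more pessimistic than needed. Because each new coordinate depends on the preceding $\approx(A_1-A_0)p$ entries, one can produce $A_0p$ new coordinates from the original data before any compounding occurs, so the genuine compounding depth is on the order of $1/A_0$ rather than $p$; the paper's induction exploits this. The conclusion (some finite Lipschitz constant) is unaffected, but the estimate you state is off by an exponential factor.
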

\begin{proof}
We put in $\R^{2p+1}$ the sup norm:
$\|x\|=\|(x_0,x_1,\ldots,x_{2p})\|=\sup_{t=0,\ldots ,2p}|x_j|$.

From the definition of $T$ we have $T(N_0,N_1,\ldots,
N_{2p})=(N_p,N_{p+1},\ldots, N_{3p})$. Hence for all $j=0,\ldots ,p$ we have
\begin{equation} \label{bimba}
\, |(T(N)-T(N'))_j|=|N_{j+p}-N'_{j+p}|\leq \|N-N'\|\,
.\end{equation}

For $j=p+1,\ldots ,2p$, the difference  $|N_{(t-h)}
m(N_{(t-h)})-N'_{(t-h)}m(N'_{(t-h)})|$ can be estimated as follows:
\begin{enumerate}
\item[(a)]
If $N_{(t-h)}\leq 1$ and $N'_{(t-h)}\leq 1$ then by
inequalities (\ref{eq2}) we have that $$|N_{(t-h)}
m(N_{(t-h)})-N'_{(t-h)}m(N'_{(t-h)})|\leq
m_0|N_{(t-h)}-N'_{(t-h)}|\, .$$
\item[(b)]
If $N_{(t-h)}\geq 1$ and $N'_{(t-h)}\geq 1$ then, again by
 (\ref{eq2}), we have that $$|N_{(t-h)} m(N_{(t-h)})-N'_{(t-h)}m(N'_{(t-h)})|\leq
|(N_{(t-h)})^{1-\gamma}-(N'_{(t-h)})^{1-\gamma}|m_0\, .$$
 By the Mean Value Theorem,  there is $\widetilde{N}\in (N_{(t-h)},N'_{(t-h)})$  such
 that
 $$|(N_{(t-h)})^{1-\gamma}-(N'_{(t-h)})^{1-\gamma}|=|1-\gamma|\widetilde
 N^{-\gamma}|N_{(t-h)}-N'_{(t-h)}|\, .$$
 Since $\gamma>1$ and $\widetilde N>1$ we obtain
 $$m_0|(N_{(t-h)})^{1-\gamma}-(N'_{(t-h)})^{1-\gamma}|\leq
 m_0(\gamma-1)|N_{(t-h)}-N'_{(t-h)}|\, .$$
\item[(c)]
If one of the above quantities is greater than 1 and the
other is not, say $N'_{(t-h)}>1$ and $N_{(t-h)}\leq 1$, then
$$|N_{(t-h)}
m(N_{(t-h)})-N'_{(t-h)}m(N'_{(t-h)})|=m_0|N_{(t-h)}-(N'_{(t-h)})^{1-\gamma}|\,
.$$ If $N_{(t-h)}\geq (N'_{(t-h)})^{1-\gamma}$ then, since
$0<N_{(t-h)}\leq 1$ and $1-\gamma<0$ we get
$$m_0|N_{(t-h)}-(N'_{(t-h)})^{1-\gamma}|=m_0(N_{(t-h)}-(N'_{(t-h)})^{1-\gamma})\leq$$
$$m_0((N_{(t-h)})^{1-\gamma}-(N'_{(t-h)})^{1-\gamma})=$$
$$m_0|(N_{(t-h)})^{1-\gamma}-(N'_{(t-h)})^{1-\gamma}|\leq m_0(\gamma-1)|N_{(t-h)}-N'_{(t-h)}|\,
.$$ Otherwise, if $N_{(t-h)} < (N'_{(t-h)})^{1-\gamma}$ then, since
$N'_{(t-h)}>1$ and $1-\gamma<0$, we have $0>N_{(t-h)} -
(N'_{(t-h)})^{1-\gamma}>N_{(t-h)}-N'_{(t-h)}$ and therefore
$$|N_{(t-h)}
m(N_{(t-h)})-N'_{(t-h)}m(N'_{(t-h)})|=
 m_0|N_{(t-h)}-(N'_{(t-h)})^{1-\gamma}|\leq m_0|N_{(t-h)}-N'_{(t-h)}|\, .$$
\end{enumerate}
Next, to estimate $|N_t-N'_t|$ for $t=p,p+1,\ldots ,A_0p$, we
 use (a), (b) and (c) above as below. Let $L=\max\{m_0,m_0(\gamma-1)\}$. Taking
into account that $m_\rho(t-h)$ and $S(h)$ are between 0 and 1 and
$\Delta h=\frac{1}{p}$ we obtain that:
\[|N_t-N'_t|=\left|\sum_{h=A_0p}^{2p-1}N_{(t-h)} m(N_{(t-h)}) m_\rho(t-h) S(h) \Delta
h-\right. \left.\sum_{h=A_0p}^{2p-1}N'_{(t-h)} m(N'_{(t-h)})
m_\rho(t-h) S(h) \Delta h\right|=\]
$$\left|\frac{1}{p}\sum_{h=A_0p}^{2p-1}\big(N_{(t-h)}
m(N_{(t-h)})-N'_{(t-h)}m(N'_{(t-h)})\big) m_\rho(t-h) S(h)
\right|\leq$$
$$\frac{1}{p}\sum_{h=A_0p}^{2p-1}\big|N_{(t-h)}
m(N_{(t-h)})-N'_{(t-h)}m(N'_{(t-h)})\big| m_\rho(t-h) S(h) \leq$$
\begin{equation} \label{pimba} \frac{1}{p}\sum_{h=A_0p}^{2p-1}L|N_{(t-h)-}N'_{(t-h)}|\leq
\frac{1}{p}\sum_{h=A_0p}^{2p-1}L\max_h|N_{(t-h)}-N'_{(t-h)}|\leq
(A_1-A_0)L\|N-N'\|\, .\end{equation}

Taking into account that $1\leq (A_1-A_0)L$ and inequalities
(\ref{bimba}) and (\ref{pimba}) we have that for all $j=0,\ldots
,p,p+1,\ldots ,p+A_0p$, $|T(N)-T(N')|_j\leq
(A_1-A_0)L\cdot\|N-N'\|$. By induction, since $A_0p>1$, we obtain that
 $\|T(N)-T(N')\|\leq (A_1-A_0)L\cdot\|N-N'\|$ finishing the proof.
\end{proof}

\begin{Cor} \label{fijo}
There is a fixed point $p$ for $T:\mathcal{K}\to \mathcal{K}$.
\end{Cor}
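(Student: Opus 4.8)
The plan is a direct appeal to Brouwer's fixed point theorem. First I would record that the set
$$\mathcal{K}=\left[\frac{c_0m_0}{2}N_{max}^{1-\gamma},N_{max}\right]^{pA_1+1}$$
is a nonempty, compact, convex subset of $\R^{2p+1}$ (recall $pA_1+1=2p+1$ since $A_1=2$): it is a finite product of nondegenerate closed bounded intervals — nondegenerate because $c_0m_0>2$ together with $\gamma>1$ forces $\frac{c_0m_0}{2}N_{max}^{1-\gamma}<N_{max}$ — hence it is affinely homeomorphic to a closed Euclidean ball. Second, by Lemma \ref{Lipschitz} the map $T$ is Lipschitz on $H$, and since $\mathcal{K}\subset H$ the restriction $T\colon\mathcal{K}\to\R^{2p+1}$ is in particular continuous. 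Third, Corollary \ref{compacto} gives $T(\mathcal{K})\subseteq\mathcal{K}$. Brouwer's theorem applied to the continuous self-map $T\colon\mathcal{K}\to\mathcal{K}$ of a compact convex body then produces a point $p\in\mathcal{K}$ with $T(p)=p$, which is exactly the asserted fixed point.

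The only point that deserves a second look is the inclusion $T(\mathcal{K})\subseteq\mathcal{K}$ underlying Corollary \ref{compacto}, so I would spell it out. Writing $N=(N_0,\dots,N_{2p})\in\mathcal{K}$ and $T(N)=(N_p,\dots,N_{3p})$, the coordinates $N_p,\dots,N_{2p}$ already lie in $[\tfrac{c_0m_0}{2}N_{max}^{1-\gamma},N_{max}]$ by hypothesis, while the new coordinates $N_{2p+1},\dots,N_{3p}$ produced by (\ref{clave}) satisfy $N_t\le N_{max}$ by Proposition \ref{acotado}. For the lower bound I would apply Proposition \ref{noextingue} inductively over blocks of length $A_0p$ (as in the paragraph following it): as long as the preceding entries are $\ge \tfrac{c_0m_0}{2}N_{max}^{1-\gamma}$, one has either $N_t\ge\tfrac{c_0m_0}{2}N_{max}^{1-\gamma}$ directly, or $N_t\ge\tfrac{c_0m_0}{2}i(N)\ge\tfrac{c_0m_0}{2}\cdot\tfrac{c_0m_0}{2}N_{max}^{1-\gamma}\ge\tfrac{c_0m_0}{2}N_{max}^{1-\gamma}$, the last step because $c_0m_0>2$ makes $\tfrac{c_0m_0}{2}>1$. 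Hence every coordinate of $T(N)$ lies in the required interval.

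There is no real obstacle in this argument: all the analytic work — boundedness, permanence, and continuity of $T$ — has already been carried out in Propositions \ref{acotado} and \ref{noextingue} and in Lemma \ref{Lipschitz}, so the corollary is genuinely a ``straightforward consequence''. If one preferred not to invoke Brouwer's theorem in the stated form, the same conclusion follows from the Schauder fixed point theorem; but in finite dimension Brouwer suffices, the essential ingredients being only the continuity of $T$ and the forward invariance of the compact convex box $\mathcal{K}$.
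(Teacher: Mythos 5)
Your argument is correct and follows exactly the route taken in the paper: Lemma \ref{Lipschitz} gives continuity of $T$, Corollary \ref{compacto} gives the forward invariance $T(\mathcal{K})\subseteq\mathcal{K}$, and Brouwer's fixed point theorem applied to the compact convex box $\mathcal{K}$ (the paper phrases it as a $(2p+1)$-dimensional topological disk) yields the fixed point. Your additional spelling-out of why $T(\mathcal{K})\subseteq\mathcal{K}$ via Propositions \ref{acotado} and \ref{noextingue} is a sound elaboration of what the paper delegates to Corollary \ref{compacto}.
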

\begin{proof}
By Lemma \ref{Lipschitz} the map $T$ is Lipschitz hence continuous. Moreover
$\mathcal{K}$ is a $(2p+1)$-dimensional topological disk. Hence
Brouwer Fixed Point Theorem applies, \cite[Chapter 4, Section 7]{Sp}.
\end{proof}

\begin{Obs}
Since every two years ($A_1=2$) the rodent population is renewed
perhaps it is more natural to search for fixed points for
$T^2:\mathcal{K}\to\mathcal{K}$. So, we are interested in both,
fixed points and period-two points $N\in\mathcal{K}$. Their
existence is guaranteed by Corollary \ref{fijo}.

 In Appendix E we estimate the coordinates of a fixed point $p$ of $T^2:H\to H$. We find that the distance given by the norm of the supremum between
$p$ and $T^2(p)$ is about $8.0148\times 10^{-14}$ and the $l^1$ norm
is about $4.0353\times 10^{-12}$. This estimate of $p$
is better than that obtained by Arlot,
\cite[Section B.8]{Ar}, which is of order $10^{-4}$ for the $l^1$ norm.
\end{Obs}
\section{Existence of an attractor for the discrete model.}

\begin{Prop} \label{atractor}
Let $\Lambda=\bigcap_{n\geq 0}T^n(\mathcal{K})$. Then
$\Lambda\neq\emptyset$ is compact $T$-invariant and there is a
neighborhood $U=U(\Lambda)$ such that $T(\overline{U})\subset U$,
i.e., $\Lambda$ is an attractor for $T$.
\end{Prop}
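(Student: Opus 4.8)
The plan is to read off the statement from the facts already established. By Lemma~\ref{Lipschitz} the map $T$ is continuous, and by Corollary~\ref{compacto} it carries the compact box $\mathcal{K}=[\alpha,N_{max}]^{2p+1}$, where I abbreviate $\alpha:=\frac{c_{0}m_{0}}{2}N_{max}^{1-\gamma}$, into itself. Hence $\{T^{n}(\mathcal{K})\}_{n\geq0}$ is a nested sequence of non-empty compact sets, so $\Lambda=\bigcap_{n\geq0}T^{n}(\mathcal{K})$ is non-empty, compact, and is the maximal $T$-invariant subset of $\mathcal{K}$. To get $T(\Lambda)=\Lambda$ I would use the standard fact that, for a decreasing sequence $A_{n}$ of non-empty compacta and a continuous $T$, one has $T\bigl(\bigcap_{n}A_{n}\bigr)=\bigcap_{n}T(A_{n})$: the inclusion $\subseteq$ is immediate, and for $\supseteq$, if $y\in\bigcap_{n}T(A_{n})$ the sets $A_{n}\cap T^{-1}(y)$ form a decreasing family of non-empty compacta whose common point maps to $y$. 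Applied to $A_{n}=T^{n}(\mathcal{K})$ this gives $T(\Lambda)=\bigcap_{n\geq1}T^{n}(\mathcal{K})=\Lambda$.

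The only delicate point is to produce an open $U\supseteq\Lambda$ with $T(\overline{U})\subseteq U$. Here one must notice that $T$ is merely Lipschitz, not a contraction, and in fact $T(\mathcal{K})\not\subseteq\interior\mathcal{K}$: by the definition of $T$ the first $p+1$ coordinates of $T(N)$ are $N_{p},\dots,N_{2p}$, merely shifted, so they need not be pushed off $\partial[\alpha,N_{max}]$. By contrast, the last $p$ coordinates of $T(N)$ are the newly computed values of (\ref{clave}), and Propositions~\ref{acotado} and~\ref{noextingue} say that whenever every coordinate of $N$ exceeds $N_{max}^{1-\gamma}$ these new values lie in $[\alpha,N_{max}]$ --- strictly inside any slightly wider interval. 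I would exploit this asymmetry by enlarging $\mathcal{K}$ to a box with \emph{graded} side lengths, widest on the coordinates carrying the ``oldest'' data. Precisely, since $\alpha>N_{max}^{1-\gamma}>0$ (because $c_{0}m_{0}>2$), fix $\delta$ with $0<\delta<\frac{\alpha-N_{max}^{1-\gamma}}{2p+1}$ and set
$$V:=\prod_{j=0}^{2p}[a_{j},b_{j}],\qquad a_{j}:=\alpha-\delta(2p+1-j),\qquad b_{j}:=N_{max}+\delta(2p+1-j).$$
Then $a_{j}$ increases and $b_{j}$ decreases in $j$, $\min_{j}a_{j}=a_{0}>N_{max}^{1-\gamma}$, and $\mathcal{K}\subseteq\interior V$ since $a_{j}<\alpha\leq N_{max}<b_{j}$ for every $j$.

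The key claim is $T(V)\subseteq\interior V$. For $N\in V$ and $j\leq p$, the $j$-th coordinate of $T(N)$ is $N_{p+j}\in[a_{p+j},b_{p+j}]\subseteq(a_{j},b_{j})$, the last inclusion being the grading (here $p\geq1$ is used). For $p<j\leq2p$ the $j$-th coordinate of $T(N)$ is $N_{p+j}$, one of the values $N_{2p+1},\dots,N_{3p}$ computed from $N$; since every coordinate of $N$ is $\geq a_{0}>N_{max}^{1-\gamma}$, the upper bound of Proposition~\ref{acotado} (whose proof uses only $N_{i}m(N_{i})\leq m_{0}$ and $m_{\rho}\leq1$) and the lower bound of Proposition~\ref{noextingue}, extended by the same induction to all these coordinates (whose inputs are either original coordinates $\geq a_{0}$ or already-computed ones $\geq\alpha$, in both cases $>N_{max}^{1-\gamma}$), give $N_{p+j}\in[\alpha,N_{max}]\subseteq(a_{j},b_{j})$. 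Hence $T(N)\in\interior V$, proving the claim.

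To conclude, take $U:=\interior V$. It is open and $\Lambda\subseteq\mathcal{K}\subseteq\interior V=U$, so $U$ is a neighbourhood of $\Lambda$; moreover $\overline{U}=V$ (a product of non-degenerate intervals), so $T(\overline{U})=T(V)\subseteq\interior V=U$, which is precisely the trapping property, and $\Lambda$ is an attractor. To see that $\overline{U}$ traps exactly $\Lambda$, one checks that applying $T$ three times replaces every coordinate by a freshly computed value, so $T^{3}(V)\subseteq[\alpha,N_{max}]^{2p+1}=\mathcal{K}$; then $\Lambda=\bigcap_{n}T^{n}(\mathcal{K})\subseteq\bigcap_{n}T^{n}(V)\subseteq\bigcap_{n\geq3}T^{n-3}(\mathcal{K})=\Lambda$, so $\bigcap_{n}T^{n}(\overline{U})=\Lambda$. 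The step I expect to be the real obstacle is exactly the one flagged in the second paragraph --- the non-contracting ``shift'' coordinates, which forbid the naive choice $U=\interior\mathcal{K}$ --- and the graded box is the device that absorbs them.
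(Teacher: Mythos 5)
Your argument for $\Lambda\neq\emptyset$, its compactness, and the strong invariance $T(\Lambda)=\Lambda$ is correct and actually sharper than the paper's, which only proves the inclusion $T(\Lambda)\subseteq\Lambda$. For the trapping neighborhood you take a genuinely different route: the paper inflates $\mathcal{K}$ to $[\mathcal{K}]_\epsilon\subset H$, uses compactness to find $n_1$ with $T^{n_1}([\mathcal{K}]_\epsilon)\subseteq\mathcal{K}$ and a contradiction argument to find $n_0$ with $T^{n_0}(\mathcal{K})\subseteq U(\Lambda)$, and composes to get $T^{n_0+n_1}(\overline{U(\Lambda)})\subseteq U(\Lambda)$ --- a \emph{high} iterate traps, and the conversion to a one-step trapping $T(\overline{U})\subseteq U$ is left implicit. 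You instead aim directly at the one-step inclusion, and your observation that $U=\interior\mathcal{K}$ cannot work because of the shifted block of coordinates is correct; the graded box is the right device to absorb that shift.

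There is, however, a gap in the key claim $T(V)\subseteq\interior V$. For the freshly computed coordinates $p<j\leq 2p$ you invoke the lower bound $N_{p+j}\geq\alpha=\frac{c_0m_0}{2}N_{max}^{1-\gamma}$ of Proposition~\ref{noextingue}, on the grounds that all inputs exceed $N_{max}^{1-\gamma}$. But the proof of that lower bound uses an \emph{upper} bound on the inputs as well: in the case $N(t-h)>1$ the step $N(t-h)^{1-\gamma}\geq N_{max}^{1-\gamma}$ requires $N(t-h)\leq N_{max}$, because $1-\gamma<0$. Your box has $b_j=N_{max}+\delta(2p+1-j)>N_{max}$ in every coordinate, so this hypothesis fails for the original entries of $N$, and the guaranteed bound degrades from $\alpha$ to $\frac{c_0m_0}{2}\,\bigl(\max_j b_j\bigr)^{1-\gamma}<\alpha$. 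Whether this weaker bound still clears the corresponding lower face $a_j=\alpha-\delta(2p+1-j)$ \emph{for all small $\delta$} is precisely a race between two quantities that both tend to $\alpha$ linearly as $\delta\to0$; comparing slopes one finds a condition of the form $(\gamma-1)\alpha/N_{max}<1/p$ (tightest at $j=2p$), which is not a consequence of hypotheses 1--7, though it holds easily for the paper's numerical parameters. To close the gap you would need either to add such a smallness condition, or, more satisfyingly, to strengthen Proposition~\ref{acotado} to a strict bound $N_t\leq N_{max}-\eta$ for some uniform $\eta>0$ (available because $m_\rho$ vanishes on a fixed fraction of the summation window) so that you can keep $b_j\leq N_{max}$ and grade only the lower faces of $V$, whereupon all inputs stay in $[a_0,N_{max}]\subset(N_{max}^{1-\gamma},N_{max}]$ and Proposition~\ref{noextingue} applies verbatim.
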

\begin{Obs}
We are not assuming that $\Lambda$ is transitive in the definition
of attractor.
\end{Obs}
\begin{proof}
 Since $T(\mathcal{K})\subset \mathcal{K}$
we have that $C_n=\cap_{j= 0}^{n}T^j(\mathcal{K})$ is a decreasing
sequence of non empty compact subsets of $\R^{2p+1}$; $C_0\supset
C_1\supset\cdots\supset C_n\supset\cdots$. Thus, by Baire Theorem, we have
that $\Lambda\neq\emptyset$ and $\Lambda$ is compact.

 By definition of $\Lambda$
we have
$$T(\Lambda)=T(\cap_{n\geq
0}T^n(\mathcal{K}))\subset\cap_{n\geq 0}T^{n+1}(\mathcal{K})\subset
\cap_{n\geq 0}T^n(\mathcal{K})=\Lambda\, , $$ proving that $\Lambda$
is $T$-invariant.

Let $[\mathcal{K}]_\epsilon:=\{N\in\R^{2p+1}\,: \,
\dist(N,\mathcal{K})\leq \epsilon\}$ and $\epsilon>0$ be so
small that $$[\mathcal{K}]_\epsilon\subset H=\{N=(N_0,N_1,\ldots
,N_{2p})\in\R^{2p+1}\,:\, \forall\, j=0,1,\ldots 2p:\,N_j>0\}\, .$$
By Proposition \ref{noextingue} and Proposition \ref{compacto}, for all
$x\in[\mathcal{K}]_\epsilon$ there is $n(x)>0$ such that
$T^{n(x)}(x)\in \mathcal{K}$. By continuity of $T$, see Lemma
\ref{Lipschitz}, there is $U(x)$ a neighborhood of $x$ contained in
$H$ such that  $T^{n(x)}(y)\in\mathcal{K}$ for all $y\in U(x)$. By
compactness of $[\mathcal{K}]_\epsilon$ there is $n_1>0$ such that
 $T^n([\mathcal{K}]_\epsilon)\subset
\mathcal{K}$ for all  $n\geq n_1$.

Let now  $U(\Lambda)$ be a neighborhood of $\Lambda$ contained in
$[\mathcal{K}]_\epsilon$.
\begin{claim}
There is $n_0>0$ such that
 $T^n(\mathcal{K})\subset U(\Lambda)$ for all $n\geq n_0$.
 \end{claim}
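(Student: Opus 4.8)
The plan is to show that the nested sets $C_n = T^n(\mathcal K)$ shrink into the given neighborhood $U(\Lambda)$ after finitely many steps, using compactness of $\mathcal K$ together with the convergence $\bigcap_n C_n = \Lambda$. Suppose, for contradiction, that no such $n_0$ exists. Then for every $n$ there is an index $k_n \ge n$ and a point $x_n \in C_{k_n} \subset C_n$ with $x_n \notin U(\Lambda)$. Since all the $C_n$ are contained in the compact set $\mathcal K$, the sequence $\{x_n\}$ has a convergent subsequence $x_{n_j} \to x_\infty \in \mathcal K$; moreover $x_\infty \notin U(\Lambda)$ because the complement of $U(\Lambda)$ is closed.

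The key step is to check that $x_\infty \in \Lambda$, which will contradict $U(\Lambda) \supset \Lambda$. Fix $m \ge 0$. For all $j$ large enough we have $n_j \ge m$, hence $x_{n_j} \in C_{n_j} \subset C_m = T^m(\mathcal K)$. Since $T$ is continuous (Lemma \ref{Lipschitz}) and $\mathcal K$ is compact, $T^m(\mathcal K)$ is compact, hence closed; passing to the limit gives $x_\infty \in T^m(\mathcal K)$. As $m$ was arbitrary, $x_\infty \in \bigcap_{m\ge 0} T^m(\mathcal K) = \Lambda$, the desired contradiction. Therefore $n_0$ exists with $T^n(\mathcal K) \subset U(\Lambda)$ for all $n \ge n_0$.

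I expect the only subtlety worth stating carefully is the very last limiting argument: one must use that each $T^m(\mathcal K)$ is closed (which follows from continuity of $T$ plus compactness of $\mathcal K$), so that membership in $T^m(\mathcal K)$ is preserved under taking limits of subsequences; everything else is a routine compactness-and-diagonal argument. No new hypotheses beyond those already in force are needed.
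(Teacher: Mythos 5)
Your proof is correct and follows essentially the same route as the paper: argue by contradiction, use compactness of $\mathcal K$ to extract a convergent subsequence of points escaping $U(\Lambda)$, and show the limit lies in $\Lambda$ while staying outside the open set $U(\Lambda)$. The only cosmetic difference is in the final step — you invoke directly that each $T^m(\mathcal K)$ is compact hence closed, whereas the paper phrases the same fact as a distance argument on the nested intersections $\cap_{h=0}^{n_j} T^h(\mathcal K)$; your formulation is a bit more streamlined, but the underlying idea is identical.
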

 \begin{proof}
 The proof goes by contradiction. If
it were not true, for all $j\in\N$ there would exist
$x_j\in\mathcal{K}$ and $n_j>n_{j-1}$, such that $T^{n_j}(x_j)\notin
U(\Lambda)$. Since $\mathcal{K}$ is compact there exists a
convergent subsequence from $\{T^{n_j}(x_j)\}_{j\in\N}$. Without
loss we may assume that $\{T^{n_j}(x_j)\}_{j\in\N}$
itself converges to a point $z\in\mathcal{K}$. Such a  point $z$ cannot
be in $\Lambda$ since $\,T^{n_j}(x_j)\notin
U(\Lambda)$ for every $ j\in\N $. But, since $\,T^{n+1}(\mathcal{K})\subset T^n(\mathcal{K})$ for all $
n\in\N,$ we obtain
$T^{n_j}(x_j)\in \cap_{h= 0}^{n_j}T^h(\mathcal{K})$. Moreover, $z\in
\cap_{h= 0}^{n_j}T^h(\mathcal{K})$, otherwise there is $\epsilon>0$
such that $\dist (z,\cap_{h= 0}^{n_j}T^h(\mathcal{K}))>\epsilon$.
But  $\cap_{h= 0}^{n_j}T^h(\mathcal{K})\supset
\cap_{h= 0}^{n_{j+1}}T^h(\mathcal{K})$ for all $j\in\N$, so that
$\dist(z,\cap_{h= 0}^{n_{j+l}}T^h(\mathcal{K}))\geq \epsilon$ for every  $l\geq 0$
contradicting the fact that $T^{n_{j+l}}(x_{j+l})\to z$ when
$l\to\infty$. It follows that
$T^n(\mathcal{K})\subset U(\Lambda)$ for all $n\geq n_0$, proving the claim.
\end{proof}

To conclude the proof of the proposition it is enough to verify that there is
 $n_2>0$ such that
 $T^{n_2}(\overline{U(\Lambda)})\subset U(\Lambda)$. This follows
 from the fact that $T^{n_0}(\mathcal{K})\subset U(\Lambda)
 \subset\overline{U(\Lambda)}\subset
 [\mathcal{K}]_\epsilon$ taking $n_2=n_0+n_1$, thus $\Lambda$ is an attractor.
\end{proof}

It is clear that the fixed point $p$ given by Corollary \ref{fijo}
belongs to $\Lambda$. In \cite[Section B.8]{Ar} by numerical methods
it is found a candidate to be a fixed point. As we have pointed out above, in view of Corollary
\ref{fijo}, the search for such a fixed point has sense.

\begin{Obs}
By Corollary \ref{compacto}, the basin of attraction of $\Lambda$
is the whole set $H$  of points with positive coordinates (see equation {\rm (\ref{hache})} ).
Moreover,  since $\mathcal{K}$ is a disk, we can choose $U(\Lambda)$ simply connected in the proof of Proposition \ref{atractor}. These facts have some theoretical implications that
we discuss in section \ref{tom mix}.
\end{Obs}

\begin{Lem} \label{nosingular}
Assume that $S(2p-1)> 0$ and $m_\rho(1)=1$. Moreover also assume that $T$ depends smoothly
on $N=(N_0,N_1,\ldots , N_{2p})$ and that $\frac{\partial(
N_jm(N_j))}{\partial N_j}\neq 0$, $j=0,1,\ldots ,p$. Then
the differential $D_NT:\R^{2p+1}\to\R^{2p+1}$ is a
non singular linear map.
\end{Lem}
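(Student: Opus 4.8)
The plan is to compute the Jacobian $D_NT$ in coordinates, to recognize that after a harmless permutation of the coordinates it is block-triangular, and thereby to reduce the statement to the invertibility of one $p\times p$ block, which will turn out to be triangular with non-vanishing diagonal.

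First I would write $T=\pi\circ E$, where $E\colon\R^{2p+1}\to\R^{3p+1}$ is the graph map $E(N)=(N_0,\dots,N_{2p},N_{2p+1},\dots,N_{3p})$ obtained by appending, one after another, the values $N_{2p+1},\dots,N_{3p}$ produced by (\ref{clave}) for $t=2p+1,\dots,3p$, and $\pi\colon\R^{3p+1}\to\R^{2p+1}$, $\pi(N_0,\dots,N_{3p})=(N_p,\dots,N_{3p})$, is the linear projection whose composition with $E$ recovers $T$. The smoothness hypothesis makes $E$ differentiable (the chain rule being applied along the recursion), and since $\pi$ is linear, $D_NT=\pi\circ D_NE$. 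Writing $N_{2p+j}=\psi_j(N)$ for $j=1,\dots,p$, the differential $D_NE$ has the block form $\left(\begin{smallmatrix}I_{2p+1}\\ A\end{smallmatrix}\right)$, where $A$ is the $p\times(2p+1)$ matrix $A_{j,k}=\partial\psi_j/\partial N_k$; hence $D_NT$ is the square matrix whose first $p+1$ rows are the standard basis rows attached to the copied coordinates $N_p,\dots,N_{2p}$ and whose last $p$ rows are the rows of $A$. Moving the columns with indices in $\{p,\dots,2p\}$ in front of those with indices in $\{0,\dots,p-1\}$ converts $D_NT$ into
\[ \begin{pmatrix} I_{p+1} & 0\\ C & B\end{pmatrix}, \]
where $C$ and $B$ collect the derivatives $\partial\psi_j/\partial N_k$ for $k\in\{p,\dots,2p\}$ and for $k\in\{0,\dots,p-1\}$ respectively. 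Consequently $\det D_NT=\pm\det B$, and it suffices to prove that the $p\times p$ matrix $B$ is invertible.

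Next I would analyse $B$ by unwinding the recursion (\ref{clave}). Because the lag $h$ there runs over $[A_0p,2p-1]$, the extreme value $h=2p-1$ is the one that reaches furthest into the past, and one checks that each newly computed coordinate $N_{2p+j}$ reaches exactly one of the not-copied input coordinates $N_0,\dots,N_{p-1}$ through the single summand of (\ref{clave}) carrying the weight $\tfrac1p\,m_\rho(1)\,S(2p-1)$ — nonzero by the hypotheses $m_\rho(1)=1$ and $S(2p-1)>0$ — while every shorter lag, and every previously computed coordinate $N_{2p+1},\dots,N_{2p+j-1}$ that enters through the chain rule, affects only entries lying on one fixed side of that distinguished one. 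Carrying out this bookkeeping shows that, in appropriate orderings of the rows and of the columns, $B$ is triangular, its diagonal entries being, up to sign,
\[ \frac1p\,S(2p-1)\,m_\rho(1)\,\frac{\partial\big(N_j\,m(N_j)\big)}{\partial N_j}\,, \]
for the corresponding indices $j$ among $0,1,\dots,p$.

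Finally, the hypothesis $\partial\big(N_jm(N_j)\big)/\partial N_j\neq 0$ for $j=0,1,\dots,p$ forces every diagonal entry of $B$ to be nonzero, so $\det B=\prod_j B_{jj}\neq 0$ and hence $\det D_NT=\pm\det B\neq 0$, which is the assertion. The step I expect to be the real work is the middle one: one must verify that the partial order on indices induced by the lags of (\ref{clave}) — including the contributions that re-enter recursively through the already computed coordinates $N_{2p+1},\dots,N_{2p+j-1}$ — is compatible with a single linear ordering, i.e. that $B$ is genuinely triangular, and that no cancellation destroys the distinguished diagonal summand; this is exactly the place where the precise lag range and the hypotheses $S(2p-1)>0$, $m_\rho(1)=1$ have to be used with care.
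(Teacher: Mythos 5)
Your reduction to the $p\times p$ block $B$ is cleaner than the paper's and even avoids a genuine difficulty there: the paper's auxiliary map $\widehat T$ always sends $\R^{2p+2}$ into the hyperplane $\{x_p=x_{p+1}\}$, so $\det D\widehat T$ is identically zero, which makes the asserted reduction ``it suffices to prove $\det D\widehat T\neq 0$'' vacuous, and the subsequent step from ``$\det A=0$'' to concentrating on $\partial F/\partial N_{(0)}$ is never actually justified. Your factorization $T=\pi\circ E$ with a column permutation goes straight to $\det D_NT=\pm\det B$, which is the right target. You also correctly worry that the chain-rule contributions from the already-computed coordinates $N_{2p+1},\dots$ might spoil triangularity; in fact they are harmless, because the recursive Jacobian factors as $(I-L)^{-1}D$ with $L$ strictly lower triangular and $D$ the direct-dependency matrix, so $\det B=\det\bigl(D|_{\text{cols }0,\dots,p-1}\bigr)$ and the feedback may be ignored entirely. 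This is worth stating, since it removes most of what you called ``the real work.''

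What does not survive the bookkeeping is the triangular-with-nonvanishing-diagonal claim itself, for two concrete reasons. First, the maximal-lag summand $h=2p-1$ of (\ref{clave}) applied to $t=2p+j$ carries the coefficient $\tfrac{1}{p}\,m_\rho\bigl((2p+j)-(2p-1)\bigr)S(2p-1)=\tfrac{1}{p}\,m_\rho(j+1)\,S(2p-1)$, not $\tfrac{1}{p}\,m_\rho(1)\,S(2p-1)$: the argument of $m_\rho$ is $t-h$, so it varies with the row, and as $j$ runs through $\{1,\dots,p\}$ it sweeps a full year and hits the winter zeros of $m_\rho$. The hypothesis $m_\rho(1)=1$ thus controls at most one of those would-be diagonal entries. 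Second, and more basically, with the lag capped at $h=2p-1$ and $t\in\{2p+1,\dots,3p\}$, the smallest index $t-h$ ever reached is $2$; the variables $N_0$ and $N_1$ simply do not enter $T(N)$, the columns $k=0,1$ of $B$ vanish identically, and $\det B=0$ irrespective of any triangular structure. Both obstructions point to a lag-range or indexing mismatch that would have to be fixed (e.g.\ a lag reaching $h=2p$ or $2p+1$, or a shifted $T$) before the lemma can hold; the paper's displayed matrix $\partial F/\partial N_{(0)}$, showing $m_\rho(1)$ on every diagonal entry and having $p+1$ rows, does not actually follow from (\ref{clave}) as written, so your proposal inherits the same unresolved gap rather than closing it.
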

\begin{proof}
Let us duplicate the $(p+1)$-th coordinate, $N_p$, of $N=(N_0,\ldots,
N_p,\ldots ,N_{2p})$, i.e., we write $\widehat N=(N_0,\ldots
N_p,N_p,\ldots N_{2p})=(N_{(0)},N_{(1)})$, and consider $\widehat
T(N_{(0)},N_{(1)})=(N_{(1)},N_{(2)})$ where
$N_{(2)}=(N_{2p},\ldots,N_{3p})$. Thus, since the
$p^{th}$-coordinate equals the $(p+1)^{th}$-coordinate, $\widehat
T(N_{(0)},N_{(1)})$ is such that $\Pi_p(\widehat
T(N_{(0)},N_{(1)})=T(N)$ and if $\widehat T$ is locally injective then
$T$ is locally injective too. Here $\Pi_p:\R^{2p+2}\to\R^{2p+1}$ is
the projection
$$\Pi_p(x_0,\ldots,x_p,x_{p+1},\ldots,x_{2p+1})=(x_0,\ldots,x_{p-1},x_{p+1},\ldots,x_{2p+1})\, .$$

Taking into account that $(N_{2p},\ldots,N_{3p})$ depends on
$(N_0,\ldots,N_p,\ldots,N_{2p})$, this artifice allows us to write
$\widehat T(N_{(0)},N_{(1)})=(N_{(1)},F(N_{(0)},N_{(1)}))$, and
therefore
$$D\widehat T=\left(
            \begin{array}{ccc}
              A & | & Id \\
              ----&----&----  \\
              \frac{\partial F}{\partial N_{(0)}} & | & \frac{\partial F}{\partial N_{(1)}} \\
            \end{array}
          \right)
$$ where $A$ is a $(p+1)\times (p+1)$ matrix of the form
$$A=\left(
    \begin{array}{cccc}
      0 & 0 & \cdots\,0 & 1 \\
      0 & 0 & \cdots\,0 & 0 \\
      \cdots & \cdots & \cdots & \cdots \\
      0 & 0 & \cdots\,0 & 0 \\
    \end{array}
  \right)
$$
and $Id$ is the identity $(p+1)\times(p+1)$ matrix.

To prove that
$\widehat T$ is locally injective it suffice to prove that $\det D\widehat
T\neq 0$. Hence, since $\det(A)=0$ we are left to prove that
$\det\left(\frac{\partial F}{\partial N_{(0)}}\right)\neq 0$. For
this we proceed as follows. Using the expression for $N_t$ given at
equation (\ref{clave}) and denoting $\frac{\partial
(N_jm(N_j))}{\partial N_j}$ by $h(N_j)$ we compute $\frac{\partial
F}{\partial N_{(0)}}$ and find
$$\frac{1}{p}\left(
\begin{array}{cccc}
h(N_0)m_{\rho}(1)S(2p-\!\!1) & h(N_1)m_\rho(2)S(2p-\!\!2) & \ldots\ldots\ldots & h(N_p)m_\rho(p)S(p) \\
0 & h(N_1)m_\rho(1)S(2p-\!\!1) & \ldots\ldots\ldots & h(N_p)m_\rho(p-1)S(p+\!\!1) \\
0 & 0 & \ldots\ldots\ldots & \ldots\ldots\ldots \\
\cdots\cdots\cdots & \cdots\cdots\cdots & \cdots\cdots\cdots & \cdots\cdots\cdots \\
0 & 0 & \ldots\ldots\ldots  & h(N_p)m_\rho(1)S(2p-\!1) \\
\end{array}
\right)\,.
$$
 Since by hypothesis $h(N_j)\neq 0$ the thesis follows.
\end{proof}

\begin{Cor} \label{inyectivo}
Under the hypothesis of Lemma \ref{nosingular} we have that
$T:\Lambda\to\Lambda$ is locally injective.
\end{Cor}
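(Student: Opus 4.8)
The plan is to read off the corollary as a direct consequence of Lemma \ref{nosingular} and the Inverse Function Theorem. First I would record the ambient picture: under the standing hypotheses the map $T$ is a $C^\infty$ self-map of the open set $H$ defined in (\ref{hache}) (it depends smoothly on $N=(N_0,\ldots,N_{2p})$ by assumption, and by Corollary \ref{compacto} and Proposition \ref{atractor} we have $\Lambda\subset\mathcal{K}\subset H$). Lemma \ref{nosingular} says precisely that the differential $D_NT\colon\R^{2p+1}\to\R^{2p+1}$ is a linear isomorphism at every such $N$; here one uses the hypothesis $h(N_j)=\partial(N_jm(N_j))/\partial N_j\neq 0$ for $j=0,\ldots,p$, together with $S(2p-1)>0$ and $m_\rho(1)=1$.

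Next I would invoke the Inverse Function Theorem: a $C^1$ map between open subsets of $\R^{2p+1}$ whose differential is invertible at a point restricts to a diffeomorphism from some open neighborhood of that point onto an open set; in particular that restriction is injective. Applying this at each $x\in H$ produces an open set $W_x\subset H$ with $x\in W_x$ and $T|_{W_x}$ injective, i.e.\ $T$ is locally injective as a map $H\to H$.

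Finally I would restrict to the attractor. By Proposition \ref{atractor}, $\Lambda$ is compact, $T$-invariant and contained in $\mathcal{K}\subset H$, so $T(\Lambda)\subset\Lambda$ and the self-map $T\colon\Lambda\to\Lambda$ is well defined. Given $x\in\Lambda\subset H$, set $V_x:=W_x\cap\Lambda$; this is a neighborhood of $x$ in $\Lambda$ for the subspace topology, and $T|_{V_x}$ is injective because $T|_{W_x}$ is injective and $V_x\subset W_x$. Since $x\in\Lambda$ is arbitrary, $T\colon\Lambda\to\Lambda$ is locally injective, which is the assertion.

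I do not expect a genuine obstacle here: the argument is essentially a bookkeeping exercise. The only point deserving care is that the hypotheses of Lemma \ref{nosingular} — smoothness of $T$ in $N$, $S(2p-1)>0$, $m_\rho(1)=1$, and the non-vanishing of $h(N_j)$ — really hold at the relevant points, which is exactly what the statement postulates; and one should not over-claim, since the reasoning yields only \emph{local} injectivity on $\Lambda$, not global injectivity, in accordance with the homoclinic behaviour discussed later.
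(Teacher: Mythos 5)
Your argument is correct and coincides with the paper's (implicit) one: the paper states the corollary without a separate proof, the intended reading being exactly that Lemma \ref{nosingular} gives $D_NT$ nonsingular, the Inverse Function Theorem then gives local injectivity of $T$ on a neighborhood of any $N$ where the lemma's hypotheses hold, and restriction to the invariant compact set $\Lambda\subset\mathcal{K}\subset H$ finishes. (In fact the proof of Lemma \ref{nosingular} already notes, via the auxiliary map $\widehat T$, that $\det D\widehat T\neq 0$ entails local injectivity of $T$ — so the corollary is just that observation restricted to $\Lambda$.) Your closing caveats — that the hypotheses of the lemma must hold at the relevant points, and that one obtains only local, not global, injectivity — are exactly the right points to flag, the latter being consistent with Remark \ref{noglobal}.
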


\begin{Obs} \label{noglobal}
Albeit $T:H\to H$ is locally injective, by Lemma \ref{nosingular}, it is not globally injective.
To see this assume that $N_{max}>1$, $\gamma>1$ and that the definition of $m(N)$ is given by
equation {\rm(\ref{emedeene})}. If $(N_0,N_1,\ldots
,N_{2p-1})=(N_{max},N_{max},\ldots ,N_{max})$ then for $t=2p\,,\, 2p+1,\ldots, 2p+A_0p$ we get
$$N_t =\frac{1}{p}\sum_{h=A_0\,p}^{2\,p-1}N_{t-h} m(N_{t-h}) m_{\rho}(t-h) S(h)\,
 =\frac{1}{p}\sum_{h=A_0\,p}^{2\,p-1}N_{max} m(N_{max})
m_{\rho}(t-h) S(h)\,$$
\begin{equation} \label{eqnoglobal}
 =\frac{1}{p}\,N_{max}^{1-\gamma}\, m_0 \sum_{h=A_0\,p}^{2\,p-1}
m_{\rho}(t-h) S(h)\, . \end{equation}

 Similarly if we put $(N_0,N_1,\ldots
,N_{2p-1})=(N_{max}^{1-\gamma},N_{max}^{1-\gamma},\ldots ,N_{max}^{1-\gamma})$ we obtain the same values for $N_t$. By induction we get that all values are the same for $t\geq 2p$ implying that $T$ is not globally injective.
\end{Obs}

Let us point out that:
\begin{enumerate}
\item
In the original model, \cite{Ar}, $m(N_j)$ is given by equation
(\ref{emedeene}) i.e., $m(N_j)=m_0$ if $N_j\leq 1$ and
$m(N_j)=m_0N^{-\gamma}$ if $N_j>1$. Hence $N_jm(N_j)=m_0N_j$ if
$N_j\leq 1$ and $N_jm(N_j)=m_0N_j^{1-\gamma}$ if $N_j>1$ implying
that
$$h(N_j)=\frac{\partial (N_jm(N_j))}{\partial N_j}=
\left\{\begin{array}{l}
          m_0\;\mbox{ if}\quad N_j\leq 1 \\
          m_0(1-\gamma)N_j^{-\gamma}\;\mbox{
          if}\quad N_j> 1\,
          \end{array}\right. \, .$$
          Since $\gamma>1$, we have $h(N_j)\neq 0$ for all $N_j\neq 1$.

\item Assuming  that $T^2$ is $C^1$,
Lemma \ref{nosingular} gives that the fixed point $p$ found at
Corollary \ref{fijo} has all its eigenvalues different from zero.
 The numerical approximation of the eigenvalues of $DT^2_p$, for the estimated value of $p$ obtained by \cite[Section 4.2.7]{Ar} and our own estimates
  gives that
there is a single eigenvalue of modulus greater than 1 which is negative,
 and there are $A_1p$ eigenvalues of modulus less than 1. Hence $p$ is a codimension one
 hyperbolic fixed point of $T^2$.
\item
The hypothesis $S(2p-1)\neq 0$ is reasonable: otherwise one can see
that  for two initial vectors $N=(N_0, N_1,\cdots, N_{2p})$ and
$N'=(N'_0, N_1,\cdots, N_{2p})$ with
 $N_0\neq N'_0$ we get
$T(N)=T(N')$.
Thus the
 number $N_{A_1p}$ of individuals at time $A_1 p$ is not affected by
the first set $N_0$ of initial individuals. In another words the system looses
memory for  a number of years less than $A_1$ and so the actual dimension of the domain of $T$ would be less than $A_1p+1$.
\end{enumerate}


\section{Study of $\Lambda$ for $(A_0,\rho,\gamma)=(0.18, 0.30,8.25)$.}

In what follows we will assume that $T$ is smooth ( see
\cite[Section 2]{Ar}) and that the calculations made for the parameter values
 $(0.18,0.30,8.25)$
are accurate enough to obtain that
 if $p$ is the fixed point given by Corollary \ref{fijo} then the
eigenvalues $\lambda_1,\lambda_2,\ldots , \lambda_{2p}$ and $\mu$ of
$D_pT$ satisfies $|\lambda_j|<1$ for every $j=1,\ldots , 2p$, and
$\mu\approx -3.335$, in particular $|\mu|>1$\footnote{Arlot in \cite[Section 4.2.7]{Ar},
obtains that $\mu\approx -2,29$ for the parameter values $(0.15,0.30,8.25)$.}.
Lemma \ref{nosingular} proves that $p$ is in
fact a hyperbolic fixed point with  $W^s(p)$ being a codimension one
 manifold and $W^u(p)$ an arc. Moreover, since $\Lambda$ is an attractor, we have that
 $W^u(p)\subset \Lambda$ from which the fractal dimension of $\Lambda$
  is strictly greater or equal than $1$. The calculations made in \cite[Section 4.2.5]{Ar} give for this fractal dimension a value around $1.33$ from which Arlot conjectures that locally the attractor is the product of a line by a Cantor set.

\label{tom mix}
Here we shall discuss if for the choice $A_0=0.18$, $\rho=0.30$ and
$\gamma=8.25$ the system given by $T$ can be transitive.\footnote{We
thank Enrique Pujals for fruitful discussions on this topic.}
\begin{Df} \label{topmix}
Let $f:X\to X$ be a continuous map defined in the topological space
$X$. We say that the system defined by $f$ is (topologically)
transitive if for every pair of non-empty open subsets $A,\, B$ of
$X$ there is $n\in\Z $ such that $f^n(A)\cap B\neq \emptyset$. The
dynamical system defined by $f$ is topologically mixing if for every
pair of non-empty open subsets $A,\, B$ of $X$ there is $N>0$ such
that $ f^n(A)\cap B\neq\emptyset$ for all $ n\geq N \,$.
\end{Df}

In \cite[Section 5]{Ar} it is pointed out the interest in studying
the case where the parameters are $A_0=0.18$, $\rho=0.30$,
$\gamma=8.25$: it is because the numerical simulations indicates
that for this parameter choice $T_{|\Lambda}$ is transitive, see
\cite[Section 4.1.3, figure 12]{Ar}. Moreover, in \cite[Section
4.2.7, figures 34 and 35]{Ar} the geometry of the attractor
$\Lambda$ is depicted from the successive iterates of the local
unstable manifold of the fixed point $p$. This suggests that
$W^u(p)$ is dense in $\Lambda$. This was confirmed by the numerical simulations done by us, see figure \ref{figatra3}.
The next proposition shows that if
the orbit of a point in $W^u(p)$ is dense in $\Lambda$ then $T_{|\Lambda}$ is in fact
topologically mixing.

\begin{Prop} \label{toptrans}
Let us assume that there exists $x_0\in W^u(p)$ such that
$\mbox{clos}(\mbox{orbit}^+(x_0))=\Lambda$ that there exists a homoclinic point
$x$ for $p$ that we do do not have tangencies
 between the stable and unstable manifold of $p$ and that forward iterates by $T^2$
 of an unstable segment $s\subset W^u(p)$ has diameter bounded away from zero.
 Then $T:\Lambda\to\Lambda$ is
topologically mixing.
\end{Prop}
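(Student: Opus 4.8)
The plan is to deduce topological mixing of $T|_\Lambda$ from the single statement that forward $T$-iterates of unstable arcs become, and then stay, $\epsilon$-dense in $\Lambda$. Here an \emph{unstable arc} means a nondegenerate compact sub-arc of the immersed curve $W^u(p)$; note that $T$ sends unstable arcs to unstable arcs and, by Lemma \ref{nosingular} and Corollary \ref{inyectivo}, is a local diffeomorphism along the orbit of $p$, so the inclination lemma (the $\lambda$-lemma) is available in a neighbourhood of $W^u_{loc}(p)\cup W^s_{loc}(p)$. From $x_0\in W^u(p)$, the forward invariance of $W^u(p)$, and the inclusion $W^u(p)\subset\Lambda$ (recall $\Lambda$ is an attractor) one gets $\Lambda=\overline{\mbox{orbit}^+(x_0)}\subset\overline{W^u(p)}\subset\Lambda$, so $\overline{W^u(p)}=\Lambda$. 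Hence every non-empty relatively open $V\subset\Lambda$ contains an unstable arc, and since $\mbox{orbit}^+(x_0)$ is dense we may take that arc to pass through a point of this orbit.

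Two facts will do the work. (I) As $W^u(p)=\bigcup_{n\ge 0}T^n(W^u_{loc}(p))$ is an \emph{increasing} union of arcs whose union is dense, a routine compactness argument (cover $\Lambda$ by finitely many $\epsilon$-balls) gives, for each $\epsilon>0$, an integer $M(\epsilon)$ with $T^n(W^u_{loc}(p))$ $\epsilon$-dense in $\Lambda$ for all $n\ge M(\epsilon)$. (II) By the non-degeneration hypothesis there is $\delta_0>0$ such that every unstable arc has all sufficiently late forward iterates of diameter $\ge\delta_0$. Such a ``long'' unstable arc is, by hyperbolicity, uniformly transverse to $W^s_{loc}(p)$; hence if one of its iterates enters a small fixed neighbourhood $B(q,\eta_0)$ of the transverse homoclinic point $q$ (moved beforehand into $W^s_{loc}(p)$), that iterate in fact \emph{crosses} $W^s_{loc}(p)$ there — the no-tangency hypothesis being exactly what forbids a mere touching — and the $\lambda$-lemma then shows that its forward iterates contain sub-arcs converging in $C^1$ to $W^u_{loc}(p)$.

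Now combine (I) and (II). Given non-empty relatively open $A,B\subset\Lambda$, choose an unstable arc $\gamma_A\subset A$ through a point $T^k(x_0)$ of the dense orbit; by (II) some iterate $\gamma:=T^{m_0}(\gamma_A)$ has diameter $\ge\delta_0$ and still contains the orbit point $x_1:=T^{m_0+k}(x_0)$. Since $\mbox{orbit}^+(x_1)$ is dense it meets $B(q,\eta_0)$ for some $j_0$, so $T^{j_0}(\gamma)$ crosses $W^s_{loc}(p)$ transversally; by the $\lambda$-lemma $T^{j_0+i}(\gamma)$ contains an arc $\alpha_i$ with $\alpha_i\to W^u_{loc}(p)$ in $C^1$. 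Fix $m\ge M(\epsilon)$ as in (I). For $i$ large, $\alpha_i$ is $C^1$-close enough to $W^u_{loc}(p)$ that $T^m(\alpha_i)$ — only $m$, a fixed number, of further steps — is $2\epsilon$-dense in $\Lambda$; and $T^m(\alpha_i)\subset T^{j_0+i+m}(\gamma)$, which is a forward iterate of $A$. Thus there is $N=N(A,\epsilon)$ with $T^n(A)$ $2\epsilon$-dense in $\Lambda$ for all $n\ge N$, and choosing $\epsilon$ so that $B$ contains a $2\epsilon$-ball of $\Lambda$ we obtain $T^n(A)\cap B\neq\emptyset$ for all $n\ge N$, i.e.\ $T|_\Lambda$ is topologically mixing. (If one insists on treating $p$ as a $2$-periodic point, the same scheme applies to $T^2$; the transverse homoclinic point additionally produces, via Smale--Birkhoff, periodic orbits of odd period, which rules out a $2$-cyclic decomposition of $\Lambda$ and lets one pass from mixing of $T^2$ to mixing of $T$.)

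The step I expect to be hardest is ingredient (II): promoting ``a long unstable arc whose orbit visits $B(q,\eta_0)$'' to ``an iterate genuinely and transversally crossing $W^s_{loc}(p)$'', with estimates uniform enough to feed the $\lambda$-lemma. This is where the non-degeneration hypothesis, the absence of tangencies, and the definite hyperbolic angle between unstable arcs and $W^s_{loc}(p)$ must be used together; equivalently, one has to exclude that the closure of the forward orbit of a long unstable arc is a proper closed $T$-invariant subset of $\Lambda$ avoiding a neighbourhood of $W^s_{loc}(p)$. A minor secondary issue is bounding the $C^1$-distortion incurred when pushing the near-copies $\alpha_i$ of $W^u_{loc}(p)$ forward by the fixed number $m$ of steps coming from (I).
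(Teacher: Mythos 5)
Your proposal is correct and follows essentially the same route as the paper's proof: pass from the density of $\mbox{orbit}^+(x_0)$ to density of $W^u(p)$ in $\Lambda$, pick an unstable arc inside $A$ through an orbit point, iterate until some image transversally crosses $W^s_{loc}(p)$ near the homoclinic point (using the no-tangency and non-degeneration hypotheses), invoke the inclination lemma to get sub-arcs $C^1$-converging to $W^u_{loc}(p)$, and then use density of forward iterates of $W^u_{loc}(p)$ to conclude that all sufficiently late iterates of $A$ meet $B$. The one place where you add genuine value is your Fact (I): the paper asserts, without justification, that once $T^{n_0}(J)$ is $\nu/2$-dense then all later $T^n(J)$ remain so, whereas you supply the clean reason — $T^{2n}(W^u_{loc}(p))$ is an \emph{increasing} family whose union is dense, so $\epsilon$-density is eventually monotone — and then transfer it to the arcs $\alpha_i$ via their $C^1$-convergence to $W^u_{loc}(p)$ with a bounded number $m$ of extra steps. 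Your closing remark on passing from mixing of $T^2$ to mixing of $T$ via Smale--Birkhoff is not in the paper but is a reasonable addition, since $p$ is only period two.
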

\begin{proof}
Observe that by hypothesis we have in particular that $\mbox{clos}W^u(p)=\Lambda$.
 Let $A\neq
\emptyset$ and $B\neq\emptyset$ be open subsets of $\Lambda$, i.e.,
there are open subsets $\mathcal{A}$ and $\mathcal{B}$ of
$\R^{2p+1}$ such that $A=\mathcal{A}\cap\Lambda$ and
$B=\mathcal{B}\cap\Lambda$.
We will prove that there exists $n_0$ such that for all $n\geq n_0$
we have $T^n(A)\cap B\neq\emptyset$ thus proving that $T$ is
topologically mixing. Since $W^u(p)$ is dense in $\Lambda$ there is
$n_2>0$ such that $T^{n_2}(x_0)\in \mathcal{A}$. Thus $W^u(p)$ cuts
$\mathcal{A}$ in an arc $s$ containing $T^{n_2}(x_0)$.  Since
$\mbox{orbit}(x_0)$ is dense in $\Lambda$ there exists $n_1>n_2$
such that $T^{n_1-n_2}(x_0)\in U(p)$ where $U(p)$ is a neighborhood of $p$
in which we may assume that we have $C^1$-linearizing coordinates,
and $T^{n_1-n_2}(s)$ contains
an arc $J$ which intersects transversally $W^s_{loc}(p)$, this follows from
the assumptions we have done. By the
Inclination Lemma, see \cite[Chapter 2, \S 7]{PM}, $T^n(J)$
$C^1$-approaches on compact segments of $W^u(p)$. Let $\nu>0$ be the radius of a ball
contained in $\mathcal{B}$. There is $n_0>n_1$ such that
$T^{n_0}(J)$ is $\nu/2$-dense in $\Lambda$ and hence $T^n(J)$ is
$\nu/2$-dense in $\Lambda$ for all $n>n_0$. Thus $T^n(J)$ cuts
$\mathcal{B}$ implying that
$T^n(\mathcal{A})\cap\mathcal{B}\neq\emptyset$ for $n\geq n_0$. But
since $W^u(p)\subset \Lambda$ ($\Lambda$ is an attractor) we
conclude that $T^n(A)\cap B\neq\emptyset$ for $n\geq n_0$ proving
that $T$ is topologically mixing.

\end{proof}
\begin{Obs}
Roughly speaking the above result means that for the parameter
values $A_0=0.18$, $\rho=0.30$ and $\gamma=8.25$, from the
topological viewpoint we have that all possible states
$(N_0,N_1\ldots,N_{2p})\in\Lambda$ are visited and so a chaotic
behavior should be expected. On the other hand, since there are
fixed points like $p$ in $\Lambda$ if $(N_0,N_1\ldots,N_{2p})$ is
very near $p$ in practice we will see the same behavior for large
periods of time seeming that the population of these rodents is in
equilibria. On the other hand the hypothesis we have assumed
seems to be rather strong.

\end{Obs}


Another consequence of the density of the unstable manifold of $p$ in $\Lambda$ is the following (see also Remark \ref{parecedensa}).
\begin{Prop} \label{inyecglobal}
If $\mbox{clos}(W^u(p))=\Lambda$ then $T^2_{|\Lambda}:\Lambda\to\Lambda$ is injective.
\end{Prop}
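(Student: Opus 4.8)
The plan is to exploit the interplay between the local injectivity of $T^2$ established in Corollary \ref{inyectivo}, the hyperbolic structure near $p$, and the density hypothesis $\mathrm{clos}(W^u(p))=\Lambda$. Suppose, for contradiction, that there are two distinct points $x,y\in\Lambda$ with $T^2(x)=T^2(y)$. First I would use that $T^2$ maps $\Lambda$ onto $\Lambda$ (since $\Lambda=\bigcap_{n\geq 0}T^n(\mathcal K)$ is $T$-invariant and $T$, hence $T^2$, is onto $\Lambda$) to lift the situation: every point of $\Lambda$ has a full backward orbit in $\Lambda$. The idea is that a genuine failure of injectivity forces the same branching to reappear near the hyperbolic fixed point $p$, where local injectivity (Corollary \ref{inyectivo}) gives a contradiction.

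The key steps, in order, are as follows. (i) Observe that $W^u(p)\subset\Lambda$ and, by hypothesis, is dense in $\Lambda$; thus one can approximate $x$ and $y$ by points on $W^u(p)$. (ii) Use the density of $W^u(p)$ together with the fact that orbits in $W^u(p)$ accumulate on $p$ (the negative orbit of any point of $W^u(p)$ converges to $p$, and forward iterates of unstable segments spread out over $\Lambda$) to find a point $z\in W^u(p)$ whose forward orbit passes through a prescribed small neighborhood of $x$ and then of $y$. (iii) The crucial reduction: if $T^2$ identifies two distinct points $x\neq y$, then pushing forward along $T^2$ the identification persists, $T^{2n}(x)=T^{2n}(y)$ for all $n\geq 0$; choosing $n$ large and using that the $\omega$-limit set meets any neighborhood of $p$ (because $W^u(p)$ is dense, $p\in\Lambda=\mathrm{clos}(W^u(p))$, so iterates return near $p$), together with local injectivity in the linearizing chart $U(p)$, one shows that $x$ and $y$ must already have been distinct branches that can be traced back into $U(p)$ — contradicting that $T^2|_{U(p)}$ is injective. (iv) To make step (iii) rigorous I would argue with the stable/unstable foliation: since $p$ has a one-dimensional unstable manifold and codimension-one stable manifold, and $T^2$ restricted to a neighborhood of $p$ is a hyperbolic linear-like map, it is injective there; any two points with the same image under a high iterate $T^{2n}$ that both lie in $U(p)$ must coincide, and one transports this back using the Inclination Lemma (\cite[Chapter 2, \S 7]{PM}) exactly as in the proof of Proposition \ref{toptrans}.

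I expect the main obstacle to be step (iii): turning the "global" coincidence $T^2(x)=T^2(y)$ into a coincidence that happens inside the linearizing neighborhood $U(p)$. The difficulty is that forward iteration compresses distances along stable directions but need not bring $x$ and $y$ themselves into $U(p)$ simultaneously; what one really controls is where unstable segments through $W^u(p)$ go, not where arbitrary points of $\Lambda$ go. The cleanest route I see is to first reduce to the case $x,y\in W^u(p)$ by a limiting argument (using local injectivity to ensure the two approximating sequences stay separated), then use that $W^u(p)$ is a single immersed arc: two distinct points on this arc with the same $T^2$-image would, after pulling back along the arc toward $p$, yield two distinct points in $W^u_{loc}(p)\subset U(p)$ with the same image, contradicting injectivity of $T^2$ on $U(p)$. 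Verifying that the pull-back stays on $W^u(p)$ and reaches $W^u_{loc}(p)$ — i.e. that $T^2$ does not "fold" the unstable arc onto itself away from $p$ — is where the hyperbolicity and the absence of tangencies must be invoked carefully.
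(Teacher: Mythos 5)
Your overall skeleton is the same as the paper's: first prove that $T^2$ is injective on $W^u(p)$, then use density of $W^u(p)$ in $\Lambda$ to extend. But the argument you sketch for the first step is not the right one, and it fails precisely where you yourself flag uncertainty. The paper's argument is a short, purely topological one: if $x\neq y$ lie in $W^u(p)$ and $T^2(x)=T^2(y)$, then since $W^u(p)=\bigcup_{n\in\N}T^{2n}(W^u_\varepsilon(p))$ there is a compact arc $\gamma\subset W^u(p)$ joining $x$ to $y$; applying $T^2$ to $\gamma$ produces a closed curve $T^2(\gamma)\subset W^u(p)$, which is impossible since $W^u(p)$ is homeomorphic to $\R$. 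Equivalently: $T^2|_{\gamma}$ is locally injective (Lemma \ref{nosingular} plus $T^2$-invariance of $W^u(p)$), and a locally injective continuous map from an arc into $\R$ is globally injective. This uses nothing but the arc structure of $W^u(p)$ and local injectivity of $T^2$. Your proposed pull-back toward $p$ does not produce two distinct points of $W^u_{loc}(p)$ with equal $T^2$-image; to get that you would need to exclude exactly the ``folding'' of the unstable arc away from $p$ that you acknowledge you cannot exclude. The closed-loop observation is what makes the folding irrelevant, and the $\omega$-limit, Inclination Lemma, and absence-of-tangencies machinery of your steps (ii)--(iv) is neither needed nor sufficient.

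For the extension from $W^u(p)$ to all of $\Lambda$, the paper is also more concrete than your ``limiting argument'': given $x\neq y$ in $\Lambda$ with $T^2(x)=T^2(y)$, local injectivity (Corollary \ref{inyectivo}) gives balls $B(x,r_1)$, $B(y,r_2)$ on which $T^2$ is a homeomorphism onto its image; one shrinks to neighborhoods $V(x),V(y)$ with $T^2(V(x))=T^2(V(y))$, and then density of $W^u(p)$ supplies distinct points $x'\in W^u(p)\cap V(x)$, $y'\in W^u(p)\cap V(y)$ with $T^2(x')=T^2(y')$, contradicting the first step. So: keep your two-stage structure, but replace the pull-back argument by the closed-loop argument, and replace the forward-iteration apparatus by the local-homeomorphism-plus-density reduction.
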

\begin{proof}
Indeed, $T^2$ is injective when restricted to $W^u(p)$, for, if it were not true, there would exist $x,y\in W^u(p)$ such that $T^2(x)=T^2(y)$. But, since $T^2(p)=p$ it holds that $W^u(p)=\cup_{n\in\N}T^2(W^u_\varepsilon(p))$ where $W^u_\varepsilon(p)$ is the $\varepsilon$-local-unstable manifold of $p$. Thus there is $N>0$ such that $x,y\in T^{2N}(W^u_\varepsilon(p))$ and, hence, there is an arc $\gamma\subset W^u(p)$ with end points $x$ and $y$. Applying $T^2$ to $\gamma$ we find a closed loop $T^2(\gamma)$ contained in $W^u(p)$ which contradicts the fact that $W^u(p)$ is homeomorphic to $\R$.

Assume now that there are $x,y\in\Lambda$ such that $T^2(x)=T^2(y)$. Since $T^2$ is locally injective there is $r_1>0$ such that $y\notin B(x,r_1)$ where $T^2_{|B(x,r_1)}:B(x,r_1)\to H$ is a homeomorphism. There exists also $r_2>0$ such that $T^2_{|B(y,r_2)}:B(y,r_2)\to H$ is a homeomorphism. Hence we may find $V(x)\subset B(x,r_1)$ a neighborhood of $x$ and $V(y)\subset B(y,r_2)$ a neighborhood of $y$ such that $T^2(V(x))=T^2(V(y))$ .
Since, by assumption, $W^u(p)$ is dense in $\Lambda$, there is an arc $\gamma\subset W^u(p)$ such that has its end points  $x'\in V(x)$ and $y'\in V(y)$ such that $T^2(x')=T^2(y')$ contradicting that $W^u(p)$ is homeomorphic to $\R$.
\end{proof}

We point out that the numerical simulations presented in the appendices justify that the hypothesis assumed in Propositions \ref{toptrans} and \ref{inyecglobal} are reasonable. Indeed we found:
\begin{enumerate}
\item
 If there is a homoclinic point we must have positive entropy. We estimate in Appendix A the order-2 Kolmogorov entropy of the attractor, \cite{Ta}, and found a positive value $\approx 0.75$.
 \item  The absence of
tangencies should be checked in a certain way, at least in a neighborhood of $p$. In algorithm ''homclin4'' presented in Appendix B, we compute the angle between the local unstable manifold $W^u_\epsilon(p)$ and the iterate $T^m(\ell)$, of an arc $\ell\subset W^u_\epsilon(p)$, for $m>0$ such that  $T^m(\ell)$ is near $p$, founding in all cases values close to $\pi$ or 0 radians, thus $W^u_\epsilon(p)$ and $T^m(\ell)$ are almost parallel.
\item That there is a point in $W^u(p)$ whose orbit is dense is a
rather strong assumption. But when we plot the image of the first 1000 iterates of a single point of the local unstable manifold $W^u_{loc}(p)$, projected into $\R^3$ we
roughly recover the image of $\Lambda$ obtained plotting all the sequences of points
pseudo-randomly generated, see Appendix F. Moreover, in all the simulations done in algorithm ''entropia3'' presented in Appendix B, we always obtain that if $N\neq N'$ then $T^2(N)\neq T^2(N')$, indicating that the hypothesis of the density of $W^u(p)$ in $\Lambda$ assumed in Propositions \ref{toptrans} and \ref{inyecglobal} is consistent.
\item That forward iterates of a non trivial segment $s\subset W^u(p)$ have
their diameters bounded away from zero also is rather strong. But again in all the simulations done, in particular in all runs of algorithm ''homclin4'', presented in Appendix B, we verify that this is the case.
\item Moreover, there are theoretical results that point out that in a
setting like that of this model, we cannot expect $T^2_{|\Lambda}$ to be $C^1$-robustly transitive.
Indeed, by construction the attractor $\Lambda$ is contained in a simply connected neighborhood $U\subset \R^{201}$. Then by a $C^1$-small perturbation we may create a sink (see \cite{RS} for instance) whose basin of attraction may contain (part of) $W^u(p)$ . Nevertheless, the type of perturbations we can perform  with $T$ is not arbitrary and so we cannot reject
 {\it a priori} that for certain parameter values (like  $(A_0,\rho,\gamma)=(0.18, 0.30,8.25)$) the system is transitive.

\end{enumerate}
In the following subsections we check numerically the hypothesis of
Propositions \ref{toptrans} and \ref{inyecglobal}.

\subsection{Estimation of the Kolmogorov Entropy of the Attractor}
As a first step to estimate the presence of chaos in $\Lambda$ is to verify that it has sensibility with respect to initial data.
 To do so we have made computer simulations
of the system given by (\ref{clave}) with the parameter values $(0.18,0.30,8.25)$.
That $\Lambda$ presents
sensibility to initial conditions has been pointed out by Arlot,
\cite[Section 4.2.6]{Ar}. To test this property we proceed as
follows:
\begin{enumerate}
\item
We generate $M$ independent initial vectors
$N^{(j)}=(N_0^{(j)},N_1^{(j)},\ldots, N_{A_1p}^{(j)})$, $1\leq j\leq
M$. In fact what we have done is to generate $M=400$ files with
initial data chosen in a pseudo-random way. We assume that these
$400$ initial data are independent.
\item We iterate $\ell$-times by $T^2$ so that $T^{2\ell}(N^{(j)})$
can be assumed, from the practical point of view, to belong to the
attractor. The value of $\ell$ that we have chosen is $\ell=10000$
so that we are considering $T^{20000}(N^{(j)})$. For simplicity of notation we
still denote this iterate by $N^{(j)}$.
\item We add a small noise $\Delta N^{(j)}$ to $N^{(j)}$ obtaining $\widetilde
N^{(j)}=N^{(j)}+\Delta N^{(j)}$. In the computer simulations we
choose $10^{-10}\leq \|\Delta N^{(j)}\|\leq10^{-8}$.
\item We specify a initial distance $d_0$ and compute for every $j$ the
integer $b_j$ such that
$$\|T^{2i}(N^{(j)}-T^{2i}(\widetilde N^{(j)})\|\leq d_0, \, 0\leq i< b_j,\quad\mbox{and}\quad
\|T^{2b_j}(N^{(j)}-T^{2b_j}(\widetilde N^{(j)})\|> d_0\, .$$ We
choose $d_0=0.1$ since we observe fast divergence between the orbits
when this distance is achieved.
\item In all the simulations we have done we find that $b_j\leq 80$. In
fact, we change the size of the perturbation finding that even with
$10^{-18}<\|\Delta N^{(j)}\|\leq 10^{-16}$, the value of $b_j$ satisfies $b_j\leq
200$. We conclude that there are numerical evidences that
$T_{|\Lambda}$ exhibits high sensibility to initial conditions.
\end{enumerate}

As a second step to test the chaotic behavior on $\Lambda$ we
estimate its order-2 Kolmogorov entropy $K$ giving by the average time for
two initially near orbits of the attractor to diverge.
More precisely, $K$ is calculated from the average time $t_0$ that is needed
for two points in the attractor, which are initially within a specified maximum
distance $d_0$, to separate until the distance between these points
has become larger than $d_0$.

 The Kolmogorov entropy of an attractor can be
considered as a measure for the rate of information loss along the
attractor or as a measure for the degree of predictability of points
along the attractor given an initial data. In general, a positive
entropy is considered as the conclusive proof that the dynamical
system is chaotic. A zero entropy represents a constant or a regular
phenomena that can be represented by a fixed point or a periodic
attractor, \cite{Ta}.

Here we apply the definitions of the order-2 Kolmogorov entropy
suggested by Takens in \cite{Ta} and by Grassberger and Procaccia
in \cite{GP}, see also \cite{GP2}. According to  these definitions, we will estimate the
entropy from the average time required for two nearby distinct
orbits of the attractor to diverge.

According to Takens \cite{Ta} and Grassberger and Procaccia \cite{GP},
the separation of distinct nearby orbits is assumed to be
exponential and the time interval $t_0$ required for two initially
nearby points to separate by a distance larger than $d_0$ will be
exponentially distributed according to
$$C(t_0)\sim e^{-Kt_0}, $$
where $K$ is the Kolmogorov entropy, see \cite{GP3}. For practical
purposes $C(t_0)$ may be transformed into a discrete distribution
function defined as
$$ C(b)=e^{-Kb\tau_s},\quad \mbox{with}\quad b=1,2,3,\ldots \, ,$$
where $\tau_s$ is the time step between two sampled data points.
Given an initial pair of independent points within a distance $d_0$,
the variable $b$ is the number of sequential pairs of points on the
attractor such that the interpoint distance is for the first time
bigger than $d_0$.

To estimate $K$ we proceed as follows.
\begin{enumerate}
\item
We generate $Z$ independent initial vectors
$N^{(j)}=(N_0^{(j)},N_1^{(j)},\ldots, N_{A_1p}^{(j)})$, $1\leq j\leq
Z$. For practical purposes we take for $Z$ the same $M=400$ files used to estimate sensibility to initial conditions.

%
\item We iterate $\ell$-times by $T^2$ so that $T^{2\ell}(N^{(j)})$
can be assumed, from the practical point of view, to belong to the
attractor. The value of $\ell$ that we have chosen is $\ell=10000$
so that we are considering $T^{20000}(N^{(j)})\in\Lambda$. For
simplicity we still denote this iterate by $N^{(j)}$ and
will denote the initial $N^{(j)}$ by $T^{-20000}(N^{(j)})$,
but this is just a notation; we are not claiming that $T$ is globally invertible.

\item  For each $j=1,\ldots, Z$, we write in the file number $j$ the values of
$$T^{-20000}(N^{(j)}),\,N^{(j)},\,T^2(N^{(j)}),\,T^4(N^{(j)}),\ldots ,
T^{2044}(N^{(j)})\, .$$
\item  \label{above} Given a distance $d>0$, we search for pairs of vectors $T^{h_j}(N^{(j)})$,
$T^{h_i}(N^{(i)})$ such that
$\|T^{h_j}(N^{(j)})-T^{h_i}(N^{(i)})\|<d$.
According to \cite{STB} the value of $d$ should be smaller than $\frac{1}{100}$ of the absolute deviation $\delta_N$
The simulations we have
done give that the mean value $<N>$ of the population $N_t$ is about
$2.335$ and the average absolute deviation
$$\delta_N=\frac{1}{400\times 2046\times 200}\sum_{h,j,i}
\left|T^h(N^{j})_i-<N>\right|\approx 0.97\, ,$$ thus, we take  $d\leq 0.97\times 10^{-2}$ (the greater value of $d$ we have used is $d=\frac{1}{128}$).

\item Given $d$, $T^{h_j}(N^{(j)})$ and
$T^{h_i}(N^{(i)})$ as in item \ref{above}. above, we compute the
integer $b=b(i,j,h_i,h_j)$ such that
$$\|T^{2s}(T^{h_j}(N^{(j)}))-T^{2s}(T^{h_i}(N^{(i)}))\|\leq d, \quad
0\leq s< b,\quad\mbox{and}\quad$$
$$
\|T^{2b}(T^{h_j}(N^{(j)}))-T^{2b}(T^{h_i}(N^{(i)}))\|> d\, .$$
\item Letting $M=M(d)$ be equal to the
number of distinct pairs  $$T^{h_j}(N^{(j)}),\;
T^{h_i}(N^{(i)}),\;1\leq j< i\leq 400\, ,$$
verifying item
\ref{above}. we compute
 $\bar b=\frac{1}{M}\sum_{j=1}^M b_j$.
The program doing this task has to take care  to not duplicate the
number of times a given pair $T^{h_j}(N^{(j)}),\; T^{h_i}(N^{(i)})$
is computed and also to not consider as different strings the one
starting at $s=0$
$$\|T^{2s}(T^{h_j}(N^{(j)}))-T^{2s}(T^{h_i}(N^{(i)}))\|\leq d, \,
0\leq s< b,\quad\mbox{and}\quad$$
$$
\|T^{2b}(T^{h_j}(N^{(j)}))-T^{2b}(T^{h_i}(N^{(i)}))\|> d\, , $$ with
the sub-strings starting at $s=s_0>0$
$$\|T^{2s}(T^{h_j}(N^{(j)}))-T^{2s}(T^{h_i}(N^{(i)}))\|\leq d, \,
0<s_0\leq s< b,\quad\mbox{and}\quad$$
$$
\|T^{2b}(T^{h_j}(N^{(j)}))-T^{2b}(T^{h_i}(N^{(i)}))\|> d\, .$$

 \item Finally we estimate
the value of the entropy $K$ of $T^2$ by
$$ \widehat K =-\frac{1}{\tau_s}\ln\left|1-\frac{1}{\bar b}\right|\, , $$
where $\widehat K$ is the maximum-likelihood estimate of the entropy
$K$ (see \cite{STB}).
\item We repeat the items above for several values of $d$.
Taking $d\approx 1/100$ we find more than 2000 verifying item \ref{above}.,
 while for values of $d<1/50000$ the number of such pairs is too low, less than
100. More precisely, for $d=1/65536=0.0000152587890625$ we find 53
strings. This is reflected in the estimate of the standard deviation
of the entropy: for values of $d$ too small the sample is also small
and the estimation of $K$ is less accurate, as one can see in Appendix A.
\end{enumerate}

To test a confidence interval for the values obtained to $\widehat K$ we need to estimate its standard deviation. For this note that
the standard deviation of $\widehat K$ can
be obtained from the variance of $b$. To do so recall, \cite{STB}, that
$$var(b)=\frac{e^k}{(e^k-1)^2},\quad\mbox{where}\quad k=K\,\tau_s\,.$$
The standard deviation in the estimate of $\bar b$, computed in item 6. is given by
$$\sigma(\bar b)=\sqrt{var(b)/M}=\frac{e^{\widehat
k/2}}{\sqrt{M}(e^{\widehat k}-1)}\, .$$ For large values of $M$,
 $\sigma(\bar b)$ will be small. In that case we can use the
 derivative of the function $k=-\ln(1-1/b)$ in the point $\widehat k=
 \widehat K\tau_s$ to estimate the standard deviation of $k$.

\medbreak

The values obtained for the entropy of $T^2$ are listed in two
tables in Appendix A which contain also the values of $d$ we have used and
those of the standard deviation $\sigma_K$ of the entropy. For both
extreme values used for $d$, namely $d=1/128=0.0078125$ and
$d=1/65536=0.0000152587890625$ the results are less accurate, since
$0.0078125$ is ``too big'' with respect to $0.97\approx 1$, and for
$0.0000152587890625$ there are few sample points, see \cite{ER}.

Nevertheless all the estimates obtained show that $T^2_{|\Lambda}$ has
positive entropy, which implies that $T_{|\Lambda}$ also has positive
order-two entropy $K\approx 0.37$.

Thus we have strong numerical evidence that $\Lambda$ is a chaotic
attractor.

\begin{Obs}
We do not claim that we have estimated the entropy of $T_{|\Lambda}$.
The calculations made has to be seen as an indication that the model given
 by equation {\rm(\ref{clave})} exhibits a chaotic behavior.
 Rigorous proofs are needed to confirm our estimations.
\end{Obs}

\section{Existence of homoclinic points: numerical approach.}

In dynamical systems the presence of chaotic behavior
is often associated to the existence of homoclinic points.
We have assumed their existence in Proposition \ref{toptrans} to obtain that $\Lambda$ is topologically mixing.
 Next we check numerically their existence.
 To do it we proceed as follows:
 \subsection{Approximated $W^u_{loc}(p)$.}
 Due to the fact that $W^u(p)$ is one dimensional a first attempt is to try to
pick a fundamental domain in $W^u_{loc}(p)$ and search by brute force if it is possible
to find a candidate to be a homoclinic point there.
Problem: we do not know precisely $W^u_{loc}(p)$. Moreover, the value of the fixed point $p$ is known only by an approximate value $\hat p$.
 But we know that there are only one eigenvalue $\mu$ of modulus greater than 1 of $DT^2_p$
 and $\mu$ is negative. Hence, since the other $A_1 p$ eigenvalues are small in modulus,
 in fact all of them have modulus less than $0.5$, we may assume that $W^s_{loc}(p)$ is a $A_1p$-dimensional disk so
 if we iterate $\hat p$ by $T^2$, since $\mu<0$ we have that the segment $[\hat p,T^2(\hat p)]$ cuts $W^s_{loc}(p)$ at a unique point.
By the $\lambda$-lemma
 we have that the successive iterates of  $[\hat p,T^2(\hat p)]$ by $T^2$ converges to
 $W^u_{loc}(p)$.

 Thus for numerical simulations we can take as $W^u_{loc}(p)$ one of these segments.
 In some of our simulations we choose
 $[T^{38}(\hat p), T^{40}(\hat p)]$ as $W^u_{loc}(p)$ and in others we take $W^u_{loc}(p)$ as
 $[T^{30}(\hat p),T^{32}(\hat p)]$.  Observe that the length of $[T^{38}(\hat p), T^{40}(\hat p)]$ is less than $10^{-3}$ and the length of   $[T^{30}(\hat p),T^{32}(\hat p)]$ is less than $10^{-4}$. Hence, since the mean value of the data is $2.335$ and that the absolute deviation is $0.97$ such lengths
 are relatively small.

 We subdivide the chosen segments in 10000 equal parts and iterate more than 2000 times by $T$
 every point $y$ of the subdivision finding the iterate $T^{2j}(y)$ closer to $\hat p$.
 In order to not consider misleading solutions, we discard the first 20 iterates and
 check that the orbit of $y$ is ``returning near the point $\hat p\,$'', i.e., we check
 that the minimum distance is not achieved in the $21^{th}$ iterate.
 Then we create a table containing the values of $y$ and of the iterate of $y$ closer to $\hat p$.
 Not that this procedure does not prove that any of such a point $y$ is a homoclinic point.
 \subsection{Returning points.}
After this we find the value of $y_0$ and $j_0$ that minimizes $\dist(T^{2j}(y),\hat p)$.
In the simulations corresponding to $W^u_{loc}(p)\approx [T^{38}(\hat p), T^{40}(\hat p)]$
we find that
$$y_0=T^{38}(\hat p)+ \frac{5102}{10000}\left(T^{40}(\hat p)-T^{38}(\hat p)\right) \quad
\mbox{ and }\quad j_0=629\, .$$

 We find a suitable sub-interval $I_0$ such that
 $y_0\in I_0\subset I\subset [T^{38}(\hat p), T^{40}(\hat p)]$ ,
 we iterate 10 times by $T^2$ the point $y_0$ and the extreme points of the segment $I_0$,
 calling them $L_0$ and $R_0$
 \footnote{To try to subdivide the interval $I\subset [T^{38}(\hat p), T^{40}(\hat p)]$
 around $y_0$ of end points
$T^{38}(\hat p)+ (5101/10000)(T^{40}(\hat p)-T^{38}(\hat p))$ and
$T^{38}(\hat p)+ (5103/10000)(T^{40}(\hat p)-T^{38}(\hat p))$
 to obtain more precision is not a good idea since forward iterates
 by $T^2$ of $I$ increases their length exponentially fast.
 We loose any precision
 in the calculus after less than 20 iterations by $T^2$.}.
 After this we subdivide again $T^{20}(I_0)$ and find a small
 interval $I_1\subset T^{20}(I_0)$ around $T^{20}(y_0)$ and iterate again their end points
 $L_1,R_1$ and also $T^{20}(y_0)$. We continue with this procedure finding
 segments $I_h\subset T^{20}(I_{h-1})$  and their end-points $L_h, R_h$
 till we arrive to the value of $j_0$. There are cases that we cannot iterate $10$ times by $T^2$ because distances become relatively large or because we cannot assume $T^{20}([L_h,R_h])$ to be a straight segment and in that cases we reduce the step size.
The final step does not have to be a multiple of 10. We found that a suitable value
for the length of the initial segment $I_0$ is $ 1.122\times 10^{-7}$.
 To validate this procedure we have to check several things:
\begin{enumerate}
 \item  control that the length of $T^{20}(I_{h}) $ does not increase too much:
 we do not accept a length greater than $10^{-4}$. If the length of $T^{20}(I_{h}) $
 is greater than $10^{-4}$ we reduce the step used: first to 8 iterates by $T^2$
 and finally by $2$ iterates by $T^2$. In our computations
 we do not need to further reduce this number of iterates.
 \item  control that the segment $T^{20}(I_h)$ (or $T^{16}(I_h)$ or $T^4(I_h)$ in
 case that we have to choose a smaller step) does not bend too much: we require that $T^{20}(I_h)$
  behaves like a straight segment. To do so
 we subdivide the segment $I_h$ into four equal smaller segments
 $[L_h,L'_h]$, $[L'_h,T^{20h}(y_0)]$, $[T^{20h}(y_0),R'_h]$, and
 $[R'_h,R_h]$. Next we  check that after 10 iterates of these intervals by $T^2$,
 the sum of their lengths
 satisfies that
 $$T^{20}([L_h,L'_h])+T^{20}([L'_h,T^{20h}(y_0)])+T^{20}([T^{20h}(y_0), R'_h])+
 T^{20}([R'_h,R_h])$$
 is almost the same as the length of $T^{20}([L_h,R_h])$.
We reject any case where the quotient between both quantities is greater than
$1.0001$, reducing the number of iterates if it were necessary\footnote{In fact at the scale we have chosen this has never been the case for reasonable values of $\ell([L_h,R_h])$.}.
\end{enumerate}

 \subsection{Far from tangencies.}
 After computing $T^{j_0}(y_0)$ and the corresponding points $L_{h_0}$ and $R_{h_0}$
for suitable $h_0$
\footnote{If the number of iterates is always 10 then we get $h_0=\left[\frac{j_0}{10}\right]$.}
we compute the angle between $[L_{h_0},R_{h_0}]$ and
$[T^{38}(\hat p), T^{40}(\hat p)]$. We expect to have an angle close to 0 or 180 degrees,
and in fact this is the case in all the simulations: we obtain for the angle the value of
 $3.108\times 10^{-5}$ radians.  This is an indication
that we are not near a tangency.

\subsection{Evidence of homoclinic points.}
\begin{enumerate}
\item
 For a suitable choice of $I_0=[L_0,R_0]$ we compute the angle between the segments
$[\hat p, L_{h_0}]$ and $[\hat p, R_{h_0}]$.
This is a key point in our calculations. Before we indicate how we proceed to do so, recall that  the codimension one submanifold $W^s_{loc}(p)$ of $\R^{A_1p+1}$ locally separates $\R^{A_1p+1}$ in two regions that we denote by $W^{s,+}$ and $W^{s,-}$.

On the one hand, if $L_{h_0}\in W^{s,+} $ and $R_{h_0}\in W^{s,-}$ then  $[L_{h_0},R_{h_0}]$ intersects $W^s_{loc}(p)$ and so we have a homoclinic point in this segment $[L_{h_0},R_{h_0}]$. Hence, by the $\lambda$-lemma the angle between successive iterates of
the vectors $[\hat p, L_{h_0}]$ and $[\hat p, R_{h_0}]$ would increase up to a value close to $\pi$.


On the other hand, if both points are in the
same region with respect to $W^s_{loc}(p)$, say $L_{h_0} ,\, R_{h_0}\in W^{s,+}$,  then the segment $[L_{h_0},R_{h_0}]$ will not cut $W^s_{loc}(p)$ and, again by the $\lambda$-lemma, we have that the angle  between successive iterates of
the vectors $[\hat p, L_{h_0}]$ and $[\hat p, R_{h_0}]$ goes to
zero when we iterate by $T^2$. In this case the existence of a homoclinic point cannot be guaranteed.

In the simulations we have done, see Appendix F,
we obtain that for $I_0$ of length $ 1.122\times 10^{-7}$ the initial angle between $[\hat p, L_{h_0}]$ and $[\hat p, R_{h_0}]$ is
$1.337\mbox{ radians, approximately 77}$ degrees.  For the angle between
$[\hat p, T^2(L_{h_0})]$ and $[\hat p, T^2(R_{h_0})]$ we obtain a value of $3.011$
radians which is about 173 degrees.  For the angle between
$[\hat p, T^4(L_{h_0})]$ and $[\hat p, T^4(R_{h_0})]$ we obtain a value of $3.139$
radians which is about 180 degrees and for the angle between
$[\hat p, T^6(L_{h_0})]$ and $[\hat p, T^6(R_{h_0})]$ we obtain a value of $3.140$ radians.
For the subsequent iterates  the angle diminishes slightly but
up to the $14^{th}$ iterate we find that
the angle is close to $\pi$. Thus in that case we find evidence that a homoclinic point exists.

\item There are choices for
the length of $I_0$ that does not lead to such evidence. Due to the
exponential dilation in the unstable direction the behavior is rather sensible to this value.
If we choose $\ell(I_0)=1.046\times 10^{-7}$, instead of $ 1.122\times 10^{-7}$,
 we obtain at the final step that for this value both
$L_{h_0}$ and $R_{h_0}$ belong to the same local
connected component of $\R^{A_1p+1}\backslash W^s_{loc}(p)$.
In this case we have that the angle between $[\hat p, L_{h_0}]$ and $[\hat p, R_{h_0}]$ is
$1.358\times 10^{-3}\mbox{ radians}$, the angle between
$[\hat p, T^2(L_{h_0})]$ and $[\hat p, T^2(R_{h_0})]$ is $2.922\times 10^{-5}$ and the angle
between $[\hat p, T^4(L_{h_0})]$ and $[\hat p, T^4(R_{h_0})]$ is $4.582\times 10^{-6}$. This indicates that both points belong to the same region with respect to $W^s_{loc}(p)$. Thus we cannot ensure the existence of homoclinic points in this case.

  But as we have shown above, there are
  choices for the length of $I_0$, subject to all the mentioned restrictions,
  that render numerical evidence that
 we in fact do have a homoclinic point associated to the fixed point $p$.

\end{enumerate}

In the Appendix D we give the pseudo-code of the algorithms employed to
test the existence of homoclinic points.

In Appendix F we show the values of the approximate homoclinic point $y_0\in W^u_{loc}(p)$
and the angular values for the iterates $[\hat p, T^{2j}(L_{h_0})]$ and
$[\hat p, T^{2j}(R_{h_0})]$
for $j=0,1\ldots , 7$.

\subsection*{Acknowledgements}
M. J. Pacifico thanks Stefano Marmi who has introduced this problem to her. She also thanks the {\it Scuola Normale Superiore di Pisa} for its kind hospitality.

Jos\'e L. Vieitez thanks Universidad de Santiago de
Compostela, Spain,  UFRJ and IMPA, Rio de Janeiro, Brazil, for their kind
hospitality during part of the preparation of this article.

\section{Appendices.}
\subsection{Appendix A: numerical results for the entropy.} The following tables
gives the estimation of $K_{T^2}$ with
 $d$ varying from $d=1/128$  to $d=1/2048$ and $d$ varying from $d=1/4096$ to $d=1/65536$ respectively. The values of $d$ are evenly distributed.

\medbreak
\center{
\begin{tabular}{|c|c|l|}
  \hline
  entropy estimated & standard deviation  & $d$ of the estimation \\
  \hline
  $0.71868973392930086$ & $0.019083966799452565$ & $0.0078125$ \\
  $0.72369632526302973$ & $0.01900037121106015$ &
  $0.00732421875$ \\
 $0.72434809650524738$ & $0.019137203419564205$ &
 $0.0068359375$\\
$0.71294980785612502$ & $0.019363360739997387$ &
$0.00634765625$\\
$0.72339347917167310$ & $0.019254622891041776$ &
$0.005859375$\\
$0.72850976773398998$ &  $0.019582964739043648$ & $0.00537109375$\\
$0.72335976581552290$ &  $0.019863263097474933$ & $0.0048828125$\\
$0.72087894452782916$ & $0.020143892470160681$ & $0.00439453125$ \\
$0.73055620248396048$ &  $0.020338205554372255$ & $0.00390625$ \\
$0.73804687146973547$ &  $0.020719763256481047$ & $0.00341796875$\\
$0.75191849797535868$ &  $0.021276865360116811$ & $0.0029296875$\\
$0.73746684855086799$ & $0.022038110478093715$ & $0.00244140625$\\
$0.74278320582790917$ & $0.022619630761394613$ & $0.01953125$\\
$0.74484051347505878$ & $0.023688490322879650$ & $0.00146484375$\\
$0.75258445595582577$ & $0.025936858832676262$ & $0.0009765625$\\
$0.77155883257911018$ & $0.030932730422267861$ & $0.00048828125$\\
\hline
\end{tabular}}

\medbreak \noindent
%
\center{
\begin{tabular}{|c|c|l|}
  \hline
  entropy estimated & standard deviation  & $d$ of the estimation \\
  \hline
$0.79719104033302477$ & $0.038944604944862542$ & $0.000244140625$\\
$0.80273603232729273$ & $0.040161477427822390$ &
$0.0002288818359375$\\
$0.80026054504518599$ & $0.040945123242474202 $ &
$0.000213623046875$\\
 $0.80048080034198759$ &  $0.042277355365906191$ &
 $0.0001983642578125$\\
$0.78529717602382290$ & $0.043744279577575359$ &
$0.00018310546875$\\
$0.78083504948163185$ &  $0.045414270447522060$ &
$0.0001678466796875$\\
 $0.79617940645818047$ & $0.047304274588613750$ &
 $0.000152587890625$\\
 $0.80929969245707451$ & $0.049841776725230737$ &
 $0.0001373291015625$\\
$0.80683413899432532$ &  $0.053337136165239741$ &
$0.0001220703125$\\
$0.83598552255847603$ & $0.056494298338862138$ &
$0.0001068115234375$\\
$0.80841322520717467$ & $0.061957796245919238$ &
$0.000091552734375$\\
$0.86342039883772544$ & $0.068153726594861803$ &
$0.0000762939453125$\\
$0.85991632434641512$ & $0.076013458407264910$ &
$0.00006103515625$\\
$0.90006789636726776$ &  $0.091761409503824907$ &
$0.0000457763671875$\\
$0.79158725337319783$ &  $0.122667989144921260$ &
$0.000030517578125$\\
$0.96758402626170560$ &  $0.186693997202307350$ &
$0.0000152587890625$\\
\hline
\end{tabular}}

\subsection{Appendix B: description of algorithms.}

Taking into account \cite{TR}, we do not care so much about the embedding dimension and use directly
as vectors of data those given by
$N=(N_0,N_1,\ldots ,N_{200})$.

\begin{itemize}
\item
A first algorithm called ``ratones'' is used to generate 400 files
named datos$[i]$ $i=1,2,\ldots, 400$, each of which contains the
following data:
\begin{enumerate}
\item A random seed is generated to initialize a pseudo-random generator.
\item For each $i$ from 1 to 400 an initial vector of dimension 201 in which
every component is a real number $N_h$. This real number $N_h$ is in
fact a floating point number of 80 bits following IEEE
754-1985\footnote{IEEE Standard for Binary Floating-Point Arithmetic
(ANSI/IEEE Std 754-1985). Also known as IEC 60559:1989, Binary
floating-point arithmetic for microprocessor systems. } standards
for the representation, calculations and manipulations of real
numbers in a computer. The value of every element $N_h$ for $h=0$ to
$h=199$ is generated calling the RANDOM function available in the
Software Library. The value of $N_{200}$ is calculated from equation
(\ref{clave}). $N^{init}= (N_0,N_1,\ldots ,N_{200})$ is stored as
the first value in the corresponding file datos$[i]$.
\item From equation  (\ref{clave}) we compute the different values
of $N_h$ for $h\geq 201$, defining in this way recursively
$$T^2(N^{init}),\, T^4(N^{init}),\, T^6(N^{init}),\,\ldots\,.$$
    We discard the first $9999$ iterates and stored in datos$[i]$ the following $1024$ ones,
    $$T^{20000}(N^{init}).\,T^{20002}(N^{init}),\, \ldots, T^{22046}(N^{init})\, .$$
\end{enumerate}
\item A second algorithm that we call ``ratones1'' is used to perturb randomly $T^{20000}(N^{init})$ in each of the 400 files generated by ``ratones'' obtaining a vector $\widetilde N$.
    The random perturbations done vary from
    $-2^{-50}\approx -10^{-15}$ to $2^{-50}\approx 10^{-15}$ in each of
    the $h$-coordinates of $T^{20000}(N^{init})$ for $h$ from 0 to $199$.
    $\widetilde N_{200}$ is computed from
    equation (\ref{clave}).
\item The third algorithm we use, called ``sensible'', 
computes for each $i$ from 1 to 400 the number $b_i$ such that for
$j=0$ to $j=b_i-1$
$$\|T^j( T^{20000}(N^{init}))-T^j(\widetilde N)\|\leq 0.1\quad\mbox{and}
\quad  \|T^{b_i}( T^{20000}(N^{init}))-T^{b_i}(\widetilde N)\|>
0.1$$ We use the supremum norm in the calculations since this
accelerate the computations and it is clear that the results do not
depend on the norm used.

\item Algorithm, ``sensible'', also computes the mean value $<b>$ of $b_i$ as
$$<b>=\frac{1}{400}\sum_{i=1}^{400} b_i\, ,$$
in all the simulations done the value of $b_i$ was less than $180$
and $<b>\approx 100$.

\item The forth algorithm, ``dispersion'', 
calculates the mean value $<N^{(i)}>$ of data stored in the files
datos$[i]$. It calculates also the mean value of all data which
gives a result of $<N>\approx 2.34$.
\item Algorithm ``dispersion'' also computes the absolute average
deviation $$\delta_N=\frac{1}{400\times 2046\times 200}\sum_{h,j,i}
\left|T^h(N^{j})_i-<N>\right|\approx 0.97\, .$$
\item Given a value $d>0$ the algorithm ``entropia3'' compares the data stored in
datos$[j]$ with that stored in datos$[i]$ discarding the initial
vectors (only after 20000 iterates by $T$ we assume that the vectors
are in $\Lambda$). For $1\leq j < i \leq
 400$ ``entropia3'' searches for pairs $T^{h_j}(N^{(j)}),\; T^{h_i}(N^{(i)})$ such that
 their distance, given by the norm of the supremum, is less $d$.
 ``entropia3'' runs 32 times generating 32 files named info$[k]$, $k=1,\ldots ,32$, of
records each of which contains
\begin{enumerate}
\item The number $i$ of file datos$[i]$,
\item the number of iterates $h_i$ by $T$ from $N^{(i)}$,
\item the value of  $T^{h_i}(N^{(i)})$,
\item the number $j$ of file datos$[j]$,
\item the number of iterates $h_j$ by $T$ from $N^{(j)}$,
\item the value of  $T^{h_j}(N^{(j)})$.
\end{enumerate}
For values of $d$ not so small we obtain huge files info$[k]$, and
as $d$ decreases the size of these files decreases. For
computational reasons we choose $d_{max}=1/128$ (corresponding to
info$[1]$ with $6,602\, KB$) and $d_{min}=1/65536$ (corresponding to
info$[32]$ with $196\, KB$). Of course the files info$[k]$ contain a
lot of redundant information since if $\dist(T^{h_j}(N^{(j)}),\;
T^{h_i}(N^{(i)}))<d$ and also $\dist(T^{h_j+l}(N^{(j)}),\;
T^{h_i+l}(N^{(i)}))<d$, with $l>0$ less than the least positive
value
 $b$ such that $\dist (T^{h_j+b}(N^{(j)}),T^{h_i+b}(N^{(i)}))\geq
 d$, we are storing
$(j,h_j,T^{h_j}(N^{(j)});i,h_i,T^{h_i}(N^{(i)}))$, and also
$(j,h_j+l,T^{h_j+l}(N^{(j)});i,h_i+l,T^{h_i+l}(N^{(i)}))$.
\item Finally the algorithm ``entropia4'' computes the estimation of the
 second order entropy, $\widehat K$, and its standard deviation
 using the information stored in the files info$[k]$ and the formulas given in \cite{STB}.

  For this we
 calculate for each
 $(j,h_j,T^{h_j}(N^{(j)});i,h_i,T^{h_i}(N^{(i)}))$ the least positive
value
 $b$ such that $$\dist (T^{h_j+b}(N^{(j)}),T^{h_i+b}(N^{(i)}))\geq
 d\,.$$
 In order not to duplicate information, once the value $b$ corresponding to
 $(j,h_j,T^{h_j}(N^{(j)});i,h_i,T^{h_i}(N^{(i)}))$  is calculated,
  we discard in this step the records $(j, h'_j,T^{h'_j}(N^{(j)});i,h'_i,T^{h'_i}(N^{(i)}))$
 such that $h_j+b\geq h'_j$ or $h_i+b\geq h'_i$ since these should have been
 taken into account in the previous step.
\end{itemize}

\begin{Obs} \label{parecedensa}
Although we have not taken care of the possibility that
$T^{20000}(N^{(i)})=T^{20000}(N^{(j)})$ with $i\neq j$, this (very
rare) possibility did not occurred in any of the simulations we have done.
Moreover, in accordance with Proposition \ref{inyecglobal}, in all these simulations, in particular in algorithm ''entropia3'', we always obtain that if $N\neq N'$ then $T^2(N)\neq T^2(N')$, so that the conjecture that $W^u(p)$ is dense in $\Lambda$ is not contradicted.
\end{Obs}

\subsection{Appendix C: Pseudo-code of the algorithms employed}

Here we give the pseudo code of the programs in a language close to
FreePascal, the style of programming is procedural.

\medbreak

{\footnotesize

  \noindent constants used\\
     A0= 0.18;
     p = 100;
     A1 = 2;
     gamma=8.25;
     m0=50;
     rho=0.30;
     pipa=1024;
     na=400;\\

\noindent type of data structures used is standard, in particular
``extended'' means a floating point number of 10 bytes and
``longint'' or ``integer'' means an integer number occupying 4 bytes
of memory according to the standards of IEEE. We also use arrays of extended or of integer and store
the data in sequential files of records.
\medbreak

\noindent {\bf function S($h$:integer):extended};\\

\{INPUT: $h\in\Z$\quad OUTPUT: $S(h)\in\R^+$\}

 begin\\
       $\quad$ if ($h<0$) or ($h>A1*p$) then S$:=$0
\quad                           else S$:=1-h/(A1*p+1)$\\
  end;

  \medbreak

  \noindent
  {\bf function mrho(h:integer):extended};

 \{ INPUT: $h\in\Z$\quad OUTPUT: $m_\rho(h)=\left[\begin{array}{l}
                                           1 \mbox{ if }  0\leq h\mod 1< \rho    \\
                                           0 \mbox{ elsewhere}
                                         \end{array}\right.
  $\}

  begin

        entrho:=trunc(rho*p);

        if ((h mod p) $<$entrho) then mrho:=0
                              else mrho:=1;

  end;

  \medbreak

{\bf function eme(N:extended):extended};

\{INPUT: $N\in\R^+$\quad OUTPUT: $m(N)\in\R^+$\}

  begin

       eme:=m0;
        lm:=N;

       if lm$>$1 then eme:=eme*lm**(-gamma)

  end;

\medbreak

{\bf procedure comienzoazar};

 begin

 randomize;
 semilla:=maxlongint;

end;

\medbreak

{\bf procedure AZAR(var n:longint)};

\{INPUT: random\_seed OUTPUT: pseudo-random number$\in\N$\}

begin

   x:=random(200000);
   n:=x;

    end;

\medbreak

{\bf function calculo(t:integer;ene:especial):extended};

\{INPUT: $t\in\Z$\quad OUTPUT: $N\in\R^{2A_1p+1}$\}

type

    especial = array[1..2*A1*p+1] of extended;

 begin

    lc:=0;

     for h:=floor(A0*p) to A1*p do
       begin

             lc:=lc+ene[t-h]*eme(ene[t-h])*mrho(t-h)*S(h ) \qquad
       end;

       calculo:=lc/p

end;

\medbreak

{\bf procedure eneinicial};

\{INPUT: random; \quad OUTPUT: first vector $N\in\R^{A_1p+1}$\}

 begin

     for i:=1 to A1*p do begin nhi[i]:=0; rnhi[i]:=0 end;

     for i:= 1 to A1*p do
     begin

          AZAR(l);
          nhi[i]:=l +500;
          \{ we assume that at least 500 rodents are alive\}

          rnhi[i]:=nhi[i]/55000  \{we normalize values; $N_t:=1$ means 55000 rodents\}

      end;

     for i:=1 to A1*p do rnhaux[i]:=rnhi[i];

     for i:=A1*p+1 to 2*A1*p+1 do rnhaux[i]:=0;

     rnhi[A1*p+1]:=calculo(A1*p+1,rnhaux);

\{warning: the coordinates of the vector $N$ begin with 1 and
finishes with A1*p+1\}

end;

\medbreak

{\bf procedure rnhgen(ene:rentrada;var ere:rentrada)};

\{INPUT: $N\in \R^{A_1p+1}$\quad OUTPUT: $T^2(N)\in\R^{A_1p+1}$\}

  type

    rentrada = array[1..A1*p+1] of extended;

 begin

      t:=1;bo:=A1*p+1;

      for j:= 1 to bo do begin  rnhaux[j]:=ene[j]; ere[j]:=0 end;

      for j:=bo+1 to 2*A1*p+1 do rnhaux[j]:=0;

     for t:=bo+1 to 2*A1*p+1 do begin z:=calculo(t,rnhaux);

     rnhaux[t]:=rnhaux[t]+z end;

      for i:=1 to bo do ere[i]:=rnhaux[i+A1*p]

 end;

\medbreak

begin {\bf\{of program ``ratones''\}}

\{INPUT: parameter values, random data \quad \par  OUTPUT: $na$
files of data representing time series of population of {\it Microtus
Epiroticus} \}

for jj:=1 to na do

  begin

   rewrite(datos[jj]);
   comienzoazar;

   writeln('generating datos[',jj,']');

   eneinicial;
   rnhgen(rnhi,rnh);

   for j:=1 to 10000 do
    begin

       rnhv:=rnh;
       rnhgen(rnhv,rnh)

    end;  \{20000 iterates of T: N-->T**(20000)(N)\}

   for i:=1 to pipa do
     begin

         archi[i].numero:=0;

         for j:=1 to A1*p+1 do
         archi[i].serie[j]:=0;

      end;

   archi[1].serie:=rnhi; archi[2].numero:=20000; archi[2].serie:=rnh;

   for i:=3 to pipa do
   begin

              rnhv:=rnh;
              rnhgen(rnhv,rnh); \{2 iterates of T each time\}

      archi[i].numero:=20000+2*(i-2); archi[i].serie:=rnh;

   end;

   for i:=1 to pipa do begin write(datos[jj],archi[i]); end;

 end; \{of ``for jj''\}

  writeln('type any key to finish'); ch:= readkey; exit

end. \{of ``ratones''\}

\begin{center}
----------------------------------------------------------
\end{center}

{\bf procedure AZAR1(n: extended)};

\{INPUT: random\_seed\quad OUTPUT: pseudo-random number$\in\R$\}

begin

   x:=random;
   n:=x-0.5;

 end; \{of AZAR1\}

\medbreak

{\bf procedure eneperturb1};

\{INPUT: $rnhi\in\R^{A_1p+1}$\quad OUTPUT: $rnhi+\Delta rnhi\in
\R^{A_1p+1}$\}

 begin

     for i:=1 to A1*p do begin nhi[i]:=0; end;

     for i:= 1 to A1*p do
     begin

          AZAR1(l);
          nhi[i]:=l ;

          rnhi[i]:=rnhi[i]+nhi[i]/(2**50)

      end;

     for i:=1 to A1*p do rnhaux[i]:=rnhi[i];

     for i:=A1*p+1 to 2*A1*p+1 do rnhaux[i]:=0;

     rnhi[A1*p+1]:=calculo(A1*p+1,rnhaux);

end;

\medbreak

begin {\bf \{of  program ``ratones1''\}}

\{INPUT: a file ''datos'' generated by ``ratones''\quad \par
OUTPUT:
a file ``datosp'' representing an initial small perturbation of
``datos''\}

for ii:=1 to na do

begin

  rewrite(datosp[ii]);
  comienzoazar;

 reset(datos[ii]); xx.numero:=-1;  ayuda:=true;

  while (not Eof(datos)) and (ayuda=true)  do

 begin

 read(datos,xx);

 if xx.numero=0 then begin archi[1].numero:=0; archi[1].serie:=xx.serie end;

 write(xx.numero,' serie ',xx.serie[1],' | ', xx.serie[100]);

 writeln;

 if xx.numero=20000 then begin

                             rnhi:= xx.serie; ayuda:=false

                          end;

  end;

  eneperturb1;
  rnh:=rnhi;

   for i:=2 to pipa do

     begin

         archi[i].numero:=0;

         for j:=1 to A1*p+1 do
         archi[i].serie[j]:=0;
      end;

   archi[2].numero:=20000; archi[2].serie:=rnh;

   for i:=3 to pipa do

   begin

         for j:=1 to 1 do

         begin

              rnhv:=rnh;
              rnhgen(rnhv,rnh);

           end;

      archi[i].numero:=20000+2*(i-2); archi[i].serie:=rnh;

   end;

   for i:=1 to pipa do begin write(datosp,archi[i]); end;

   reset(datos); reset(datosp);

   while (not Eof(datos)) and (not Eof(datosp)) do

   begin

   read(datos,xx); read(datosp,yy);

   write(xx.numero,' serie ',xx.serie[1],' | ',yy.numero,' serie ', yy.serie[1]);

   writeln;

   end
end;

  writeln('press any key to finish'); ch:= readkey; exit

end. \{of ``ratones1''\}

\medbreak

\begin{center}
--------------------------------------------------------------
\end{center}

\medbreak

{\bf function comparar(rnhx,rnhy: rentrada):longint;}

\{INPUT: $rnhx,rnhy\in\R^{A_1p+1}$\quad OUTPUT: 0 or 1\}

 \{if ``comparar'' =0 then $||rnhx-rnhy||<tol$, if  1 then $>$ 0 \}

 begin

  i:=1;

   cmaux:=0; \{we assume that at the beginning ``comparar'' is 0\}

   while (cmaux=0) and (i$<=$A1*p) do

   begin

        if (abs(rnhx[i]-rnhy[i])$>=$tol) then cmaux:=1;
        i:=i+1;

     end;

    comparar:=cmaux;

  end;

\medbreak

begin {\bf \{of program ``entropia3''\}}

\{INPUT: $na$ files generated by ``ratones'', \quad
\par OUTPUT: 16
files with pairs of time series $d_i$-near, $i=1,2,\ldots 16$; mean
value $<N>$ of $N_t$; \par absolute standard deviation of $N_t$\}

 for jj:=1 to na do
   begin

    reset(datos[jj]);
    j:=0; z[jj]:=0;

   while (not Eof(datos[jj]))  do
   begin

   for i:=1 to 8 do
    begin

    y:=0;
    read(datos[jj],xx);  j:=j+1;

    for h:=1 to A1*p do y:=y+xx.serie[h];

    y:=y/(A1*p);
    z[jj]:=z[jj]+y; \qquad
    end

   end;  \{of ``while not Eof''\}

   z[jj]:=z[jj]/j;

  writeln('the mean value of file datos[',jj,'] is:  ', z[jj]);

  end; \{of `` for jj''\}

  prom:=0;
  for jj:=1 to na do prom:= prom+z[jj];

  prom:=prom/na;
  writeln('total mean value ', prom);

  writeln('press any key to continue'); readkey(leer);

  for jj:=1 to na do
   begin

     reset(datos[jj]); j:=0; w[jj]:=0;

     while (not Eof(datos[jj])) do
     begin

     y:=0;

     read(datos[jj],xx); j:=j+1;

     for h:=1 to A1*p do y:=y+abs(xx.serie[h]-z[jj]);

     y:=y/(A1*p);
     w[jj]:=w[jj]+y;

     end; \{of ``while''\}

     w[jj]:=w[jj]/j;

     writeln('the absolute deviation value for datos[',jj,'] is ', w[jj]);

  end; \{of ``for jj''\}

  dis:=0;

  for jj:=1 to na do dis:=dis+w[jj];
  dis:=dis/na;

  writeln('total deviation = ', dis);

  writeln('to continue press ENTER'); readln(leer);

  \{we collect data\}

 tol:=1/(2**(10));

 \{``tol'' is what is called $d$ in the algorithm; here we exemplify with $tol\approx 0.001$ \}

  rewrite(info);

  for jj:=1 to na-1 do
  begin

   reset(datos[jj]);

   If (not Eof(datos[jj])) then read(datos[jj],xx); \{we discard the first\}

   for ii:=jj+1 to na do
    begin

      while (not Eof(datos[jj])) do
       begin

         read(datos[jj],xx);
          reset(datos[ii]);

        If (not Eof(datos[ii])) then read(datos[ii],yy); \{we discard the first\}

         while (not Eof(datos[ii])) do
          begin

            read(datos[ii],yy);

            u:=comparar(xx.serie,yy.serie);

            if u=0 then  \{that is: $||T^l(N)-T^l(N')||<tol$\}

               begin

                estx.numarch1:=jj; estx.numarch2:=ii;

                estx.numiter1:=xx.numero; estx.numiter2:=yy.numero;

                estx.punto1:=xx.serie; estx.punto2:=yy.serie;

                write(info,estx);

              end \{of ``if''\}

          end \{of ``while not Eof(datos[ii])''\}

       end \{of ``while not Eof(datos[jj])''\}

    end \{of ``for ii''\}
  end \{of ``for jj''\}

  writeln('teclee cualquier tecla para finalizar'); ch:= readkey;
  exit;

  end. \{of program ``entropia''\}

  \begin{center}
-----------------------------------------------------------------------
  \end{center}

\medbreak

  begin {\bf \{of program ``entropia4''\}}

\{INPUT: A file with pairs of time series $d_i$-near,\quad
\par OUTPUT: an estimation of the second order Kolmogorov-entropy
$\widetilde K$; \par an estimation of its standard deviation
$\sigma_{\widetilde K}$\}

   rewrite(androide);

   for jj:=1 to na do
    reset(datos[jj]);

  tol:=1/(2**(10))

   begin

    base:=1;
    reset(info);

    while (not Eof(info)) do
     begin

      read(info,estx);

      if base$>$tope then begin writeln('error, table too small'); halt end;

      tabla[base].numarch1:=estx.numarch1; tabla[base].numarch2:=estx.numarch2;

      tabla[base].numiter1:=estx.numiter1; tabla[base].numiter2:=estx.numiter2;

      rnh1:=estx.punto1; rnh2:=estx.punto2; j:=0;

      repeat

              rnh1v:=rnh1;
              rnhgen(rnh1v,rnh1);
              rnh2v:=rnh2;
              rnhgen(rnh2v,rnh2);
              j:=j+1;

      until comparar(rnh1,rnh2)$<>$0;

      tabentr[base]:=j;

      if base$>$1 then
        begin

         if (tabla[base].numarch2=tabla[base-1].numarch2) and

            (tabla[base-1].numiter2+tabentr[base-1]$>=$tabla[base].numiter2)

                       then base:=base-1 \{overlap of data\}

                       else

          begin

           if (tabla[base].numarch1=tabla[base-1].numarch1) and

             (tabla[base-1].numiter1+tabentr[base-1]$>=$tabla[base].numiter1)

                       then base:=base-1 \{overlap of data\}

          end
        end;

      base:=base+1;

     end; \{of ``while not Eof''\}

   tiempos:=0;

   for i:=1 to base-1 do
    begin

     tiempos:=tiempos+tabentr[i];
     end;

   tiempos:=tiempos/(base-1);
   entropy:=-Ln(abs(1-1/tiempos));

  writeln('the values of $b_j$ are');

  for j:=1 to base -1 do

  begin
    writeln('b',j,' = ',tabentr[j],' $ |$  ');
    end;

    writeln('average of $b_j$ is $ < $b $>$ = ',tiempos);

 writeln(' Entropy estimated is  ',entropy, ', the size of the sample is ',base-1);

 rna:=base-1;

 writeln('standard deviation of K is :', 1/(sqrt(rna)*entropy*sqrt(tiempos*(tiempos-1))));

 resultado:=entropy;

 desvio:= 1/(sqrt(rna)*entropy*sqrt(tiempos*(tiempos-1)));

 writeln('to finish press any key '); ch:=readkey;

end. \{of program ``entropia4''\}

}

\subsection{Appendix D: pseudo-code of homclin4.}
\footnotesize{  {\bf Program ``homclin4''}\\
\{INPUT: a table with the candidates to be homoclinic points\}\\
\{OUTPUT: A point in $W^u_{loc}(p)$ such that near it there is numerical evidence
that it exists a homoclinic point\}\\

This program uses, apart from the functions and procedures defined above,
 two functions ``distl2'' and ''angulo''.
 ''distl2'' computes the Euclidean distance between points, and ``angulo''
computes the angle between a pair of vectors. ``angulo`` uses a function
 ``prodint'' that calculates the inner product between vectors. The program
 also uses two procedures,
 ``minimo'' that computes the minimum
between real data stored in a file called ``candihomclin'' and
 ``iterar'' that iterates the function $T^2$ a prescribed number of times.

 \medbreak

  {\bf function distl2(rnhx,rnhy: rentrada):extended;}

 \{calculates  euclidean distance between points\}

 var cmaux,i:longint; raux,dist:extended; maximo:extended;
     rnhd:rentrada;

 begin

  i:=2; maximo:=abs(rnhx[1]-rnhy[1]);

  while i$ <=$A1*p+1 do
  begin

      if  abs(rnhx[i]-rnhy[i])$ > $ maximo then maximo:=abs(rnhx[i]-rnhy[i]);

      i:=i+1 \qquad
  end;

  if maximo$<>$0 then

    for i:=1 to A1*p+1 do rnhd[i]:= abs(rnhx[i]-rnhy[i])/maximo;

  i:=1;
   dist:=0; \{ assume distance is 0\}

   if maximo$<>$0 then

   while (i$<=$A1*p+1) do
   begin

        dist:=dist+rnhd[i]*rnhd[i];
        i:=i+1;\qquad
     end;

    distl2:=maximo*sqrt(dist);

  end;

\medbreak

  {\bf function prodint(rnhx,rnhy: rentrada):extended;}

  \{computes inner product of vectors\}

  var i:longint; prod:extended; rnhd,rnhe:rentrada;maximox,maximoy:extended;

  begin

       i:=2; maximox:=abs(rnhx[1]);maximoy:=abs(rnhy[1]);

       while i$<=$A1*p+1 do
       begin

        if  abs(rnhx[i]) $>$ maximox then maximox:=abs(rnhx[i]);

        if  abs(rnhy[i]) $>$ maximoy then maximoy:=abs(rnhy[i]);

        i:=i+1 \qquad
       end;

  if (maximox*maximoy$<>$0)
  then
    begin

      for i:=1 to A1*p+1 do

          begin rnhd[i]:= rnhx[i]/maximox; rnhe[i]:=rnhy[i]/maximoy end;

     i:=1; prod:=0;

     while i$<=$A1*p+1 do
       begin

           prod:=prod+rnhd[i]*rnhe[i];
           i:=i+1 \qquad
       end;

     prodint:=prod*maximox*maximoy; \qquad
    end

   else prodint:=0;

  end;

\medbreak

 {\bf function angulo(rnhx,rnhy:rentrada):extended;}

      var equis, ye, zeta:extended;

   begin

       zeta:= prodint(rnhx,rnhy);
       equis:=sqrt(prodint(rnhx,rnhx));
       ye:=sqrt(prodint(rnhy,rnhy));

       if (equis=0) or (ye=0) then angulo:=0
       else angulo:=arccos(zeta/(equis*ye));

   end;

\medbreak

{\bf  procedure minimo;}

 var minaux:extended;fijmin:rentrada; seguir:boolean;

  begin

   min:=1; seguir:=true; \{``min'' is set to a value which will not be the minimum\}

   while (not Eof(refcandihomclin)) and (seguir=true) do
    begin

       read(refcandihomclin,homocl);
       if homocl.punto[2]$<>$0 then
         begin

          seguir:=false; min:=10**(-2);
          refhomocl:=homocl \qquad
         end;

       if (homocl.punto[2]=0) and (homocl.punto[1]$<$min) and (homocl.numh$<$700)

          and (homocl.numh$>$10)
        then
         begin

            min:=homocl.punto[1]; refhomocl:=homocl \qquad
         end;

     end; \{of while\}

  end;

\medbreak

{\bf  procedure iterar(paso:integer;sota:rentrada;var sota1:rentrada);}

  var rnhj,rnhjv:rentrada; \{``paso'' controls the number of iterations\}

  begin rnhjv:=sota;

        for j:=1 to paso do

     begin
      rnhgen(rnhjv,rnhj);
      rnhjv:=rnhj; \qquad
     end;

   sota1:=rnhj;

  end;

\medbreak

{\bf begin }\{of homclin4\}

  while not Eof(candihomclin) do
   begin

        read(candihomclin,homocl);

        if (homocl.punto[2]=0.0) and (homocl.punto[1]<0.001) then

                                   write(refcandihomclin,homocl);

        if homocl.punto[2]<>0.0 then write(refcandihomclin,homocl)

   end; \{of while\}

  reset(refcandihomclin);

  minimo;

  writeln('minimum distance to p is ', min);

  writeln('value of i=',refhomocl.numi,'  iterate closest to p is ',refhomocl.numh+10);

  writeln('initial approximation to candidate to homoclinic point M is  ');

   for j:=1 to A1*p+1 do
   begin

    fijo12[j]:=fijo6[j]+(10000-refhomocl.numi)*fijo8[j];

    if (j mod 3=0)  then writeln(fijo12[j],'|')

                  else write(fijo12[j],' |'); \qquad
   end;

 tolerancia: \{a label of reference\}

  if (refhomocl.numh mod 2 = 0) then techo:=refhomocl.numh+10

                                else techo:=refhomocl.numh+10;

  fijo8:=restar(fijo6,fijo4);

  for j:=1 to A1*p+1 do fijo8[j]:=fijo8[j]/10000;

 writeln;

 writeln('Next we refine the choice, in particular we find L and R');

 writeln('points in $[T^{38}(p),T^{40}(p)]$ identified with $W^u_{loc}(p)$');

 writeln('such that M is between them and such that the iterates');

 writeln('of L and R are in different components with respect to');

 writeln('the local stable manifold $W^s_{loc}(p)$ of $p$.');

 writeln('For convenience we continue to denote by M, L and R their iterates by $T^2$');

 writeln('Enter gap distance as a real exponent of 2 not greater than 30');

 writeln('the gap distance will be $2^{(-exponent)}$');

  write('To finish the program enter exponent=0, exponente = ');

 readln(semillon);

 if semillon$<0$ then

 begin
  semillon:=-semillon;

  writeln('a negative value has been entered, ',semillon,' will be assumed');

 end;

  if semillon$>20$ then

  begin

    writeln('exponent too large, a value of 10 will be assumed');

    semillon:=10
  end;

 while semillon$<>0$ do

 begin

  tol:=2**(semillon);  writeln('tol=',1/tol);

   for j:=1 to A1*p+1 do

   begin

    fijo12[j]:=fijo6[j]+(10000-refhomocl.numi)*fijo8[j];

    fijo11[j]:=fijo12[j]-fijo8[j]/tol;

    fijo13[j]:=fijo12[j]+fijo8[j]/tol;

    fijo115[j]:=fijo12[j]-fijo8[j]/(2*tol);

    fijo135[j]:=fijo12[j]+fijo8[j]/(2*tol);

   end;

  while techo$>0$ do

  begin

   if techo$>= 10$ then
          begin

             iterar(10,fijo12,rnh1);
             iterar(10,fijo11,rnh0);

             iterar(10,fijo13,rnh2);
             iterar(10,fijo115,rnh05);

             iterar(10,fijo135,rnh25);
             techo:=techo-10;

             writeln('distance between left and right iterates L and R is ',distl2(rnh0,rnh2));

             writeln('dist(L,L1)+dist(L1,M)+dist(M,R1)+dist(R1,R)= ',

             distl2(rnh0,rnh05)+distl2(rnh05,rnh1)+distl2(rnh1,rnh25)+distl2(rnh25,rnh2));

             if (distl2(rnh0,rnh2) $>$ 0.0001) or

             (distl2(rnh0,rnh05)+distl2(rnh05,rnh1)+distl2(rnh1,rnh25)+distl2(rnh25,rnh2)

             $ > $ 1.001*distl2(rnh0,rnh2))

              then
               begin

                writeln('distance between iterates is too large or curvature is big');

                techo:=techo+10;
                iterar(8,fijo12,rnh1);

                iterar(8,fijo11,rnh0);
                iterar(8,fijo13,rnh2);

                iterar(8,fijo115,rnh05);
                iterar(8,fijo135,rnh25);

                techo:=techo-8;

                writeln('iterating 8 times the new distance between L and R is ',distl2(rnh0,rnh2));

                writeln('dist(L,L1)+dist(L1,M)+dist(M,R1)+dist(R1,R)= ',

             distl2(rnh0,rnh05)+distl2(rnh05,rnh1)+distl2(rnh1,rnh25)+distl2(rnh25,rnh2));

             if (distl2(rnh0,rnh2) $>$ 0.0001) or

             (distl2(rnh0,rnh05)+distl2(rnh05,rnh1)+distl2(rnh1,rnh25)+distl2(rnh25,rnh2)

             $ > $ 1.001*distl2(rnh0,rnh2))

              then
               begin

                 writeln('distance between iterates continues to be too large or curvature is big');

                 techo:=techo+8;
                 iterar(2,fijo12,rnh1);

                 iterar(2,fijo11,rnh0);
                 iterar(2,fijo13,rnh2);

                 iterar(2,fijo115,rnh05);
                 iterar(2,fijo135,rnh25);

                 techo:=techo-2;

                 writeln('iterating 2 times the new distance between L and R is ',distl2(rnh0,rnh2));

                 writeln('dist(L,L1)+dist(L1,M)+dist(M,R1)+dist(R1,R)= ',

                 distl2(rnh0,rnh05)+distl2(rnh05,rnh1)+distl2(rnh1,rnh25)+distl2(rnh25,rnh2));

                          end \{of inner ``if then''\}

                        end
                      end \{of outer ``if then''\}

                 else begin \{ now ``techo'' is less or equal than 10\}

                      iterar(techo,fijo12,rnh1);
                      iterar(techo,fijo11,rnh0);

                      iterar(techo,fijo13,rnh2);
                      techo:=0

                      end;

     fijo8:=restar(rnh2,rnh0);
     fijo12:=rnh1;

      for j:=1 to A1*p+1 do

    begin

     fijo11[j]:=fijo12[j]-fijo8[j]/tol;
     fijo13[j]:=fijo12[j]+fijo8[j]/tol;

     fijo115[j]:=fijo12[j]-fijo8[j]/(2*tol);
     fijo135[j]:=fijo12[j]+fijo8[j]/(2*tol);

    end;

    writeln(' iterates= ',refhomocl.numh+10-techo,'

    distance between endpoints L and R previous to iteration is ');

    writeln(distl2(fijo11,fijo13));

   {if distl2(fijo11,fijo13) > 0.00001 then
    for j:=1 to A1*p+1 do

     begin

      fijo11[j]:=fijo12[j]-fijo8[j]/(16*tol);
      fijo13[j]:=fijo12[j]+fijo8[j]/(16*tol)

     end;}

   if distl2(fijo11,fijo13)<=0.0000000000000001 then begin

   writeln('tol is too small, please, reduce the exponent'); goto tolerancia; end;

  end; \{del while techo\}

  dist0:=distl2(fijo,rnh0);
  dist1:=distl2(fijo,rnh1);

  dist2:=distl2(fijo,rnh2);
  writeln;

  writeln(' Euclidean dist from p to original point $T^2$',*(refhomocl.numh+10),'is ',dist1);

  writeln(' Euclidean dist from p to left point ',dist0);

  writeln(' Euclidean dist from p to right point ',dist2);

  writeln(' Euclidean dist between left and right points is ');
  writeln(distl2(rnh0,rnh2));

   if (refhomocl.numh mod 2 = 0) then techo:=refhomocl.numh+10

                                else techo:=refhomocl.numh+10;

   fijo14:=restar(fijo4,fijo2);
   fijo16:=restar(rnh2,rnh0);
   rnhgen(fijo6,fijo8);

   writeln('angle between $W^u_e(p)$ and iterated arc LM is =  ');

   write(angulo(fijo14,fijo16));

  writeln(' angle in degrees is approx = ' ,round(angulo(fijo14,fijo16)*180/Pi));

  writeln(' Euclidean dist between left end-point of $W^u_e(p)$ and L is ');

  writeln(distl2(fijo6,rnh0));

   writeln(' Euclidean dist between left end-point of $W^u_e(p)$ and R is ');

   writeln( distl2(fijo6,rnh2));

   writeln(' Euclidean dist between right end-point of $W^u_e(p)$ and L is ');

   writeln(distl2(fijo8,rnh0));

   writeln(' Euclidean dist between right end-point of $W^u_e(p)$ and R is ');

   writeln(distl2(fijo8,rnh2));

   rnhgen(rnh0,rnh0v);rnhgen(rnh2,rnh2v);

  { writeln('rate of dist between rnh0, rnh2 and their iterates by $T^2$ is ');

   writeln(distl2(rnh0v,rnh2v)/distl2(rnh0,rnh2));}

   fijo18:=restar(fijo,rnh0);
   fijo20:=restar(fijo,rnh2);

   writeln('angle between vectors (p,L) and (p,R) is ');

   write(angulo(fijo18,fijo20));

   writeln(' angle in degrees is approx = ',round(angulo(fijo18,fijo20)*180/Pi));

    fijo18:=restar(fijo,rnh0v);
   fijo20:=restar(fijo,rnh2v);

   writeln('angle between vectors (p,$T^2(L)$) and (p,$T^2(R)$) is ');

   write(angulo(fijo18,fijo20));

   writeln(' angle in degrees is approx = ',round(angulo(fijo18,fijo20)*180/Pi));

   for ii:=1 to 6 do
   begin

    newfix[2*ii-1]:=rnh0v;newfix[2*ii]:=rnh2v;

    rnhgen(newfix[2*ii-1],rnh0v);rnhgen(newfix[2*ii],rnh2v);

    fijo18:=restar(fijo,rnh0v);
    fijo20:=restar(fijo,rnh2v);

    writeln('angle between vectors (p,$T^2$',*(ii+1),'(L)) and (p,$T^2$',*(ii+1),'(R)) is ');

    write(angulo(fijo18,fijo20));

    writeln(' angle in degrees is approx = ',round(angulo(fijo18,fijo20)*180/Pi));

   end;
   writeln;

   nuevofijo2:=restar(nuevofijo,rnh0);
   nuevofijo4:=restar(nuevofijo,rnh2);

   writeln('angle between vectors $(\hat p,L)$ and $(\hat p,R)$ is ');

   write(angulo(nuevofijo2,nuevofijo4));

   writeln(' angle in degrees is approx = ',round(angulo(nuevofijo2,nuevofijo4)*180/Pi));

   writeln;
 fijo8:=restar(fijo6,fijo4);

    for j:=1 to A1*p+1 do fijo8[j]:=fijo8[j]/10000;

    writeln;
   writeln('Enter gap distance as a real exponent of 2 no greater than 30');

   writeln('last exponent used is ',semillon);

  write('To finish the program enter exponent=0, exponent = ');

 readln(semillon);

 if semillon$<0$ then
     begin

      semillon:=-semillon;

      writeln('a negative value has been entered, ',semillon,' will be assumed');

     end;

  if semillon$>20$ then
  begin

    writeln('exponent too large, a value of 10 will be assumed');

    semillon:=10;

  end
 end;

     {  write(candihomclin2,homocl);}

  writeln('To continue press ENTER'); read(leer);
  writeln('Press any key to finish'); ch:=readkey;

end.

}
\subsection{Appendix E: coordinates of fixed point.}
Approximate coordinates of the fixed point $p\in\R^{201}$ of $T^2(N)$ are given in the following table.
{\footnotesize
$$
\begin{array}{ccc}
 1.2326490487970465E+0000 & 1.2110482116814741E+0000 & 1.1906886685005045E+0000 \\
 1.1717713776064593E+0000 & 1.1545319083055524E+0000 & 1.1392463406313234E+0000 \\
 1.1262379872047533E+0000 & 1.1158849347082125E+0000 & 1.1086283238645345E+0000 \\
1.1049811591622351E+0000 & 1.1055372415914759E+0000 & 1.1109795220165019E+0000 \\
1.1220867468556205E+0000 & 1.1397366769671205E+0000 & 1.1649033774342367E+0000 \\
1.1986450973703732E+0000 & 1.2420781384673748E+0000 & 1.2087810376155805E+0000 \\
 1.1754839367637863E+0000 & 1.1421868359119921E+0000 & 1.1088897350601978E+0000\\
1.0755926342084036E+0000 & 1.0422955333566094E+0000 & 1.0089984325048151E+0000 \\
 9.7570133165302088E-0001 & 9.4240423080122665E-0001 & 9.0910712994943241E-0001\\
8.7581002909763817E-0001 & 8.4251292824584393E-0001 & 8.0921582739404970E-0001 \\
 7.7591872654225546E-0001 & 7.4463460038547187E-0001 & 7.1528062031461468E-0001\\
6.8777896512033852E-0001 & 6.6205661495172081E-0001 & 6.3804515773116380E-0001\\
 6.1568060751297437E-0001 & 5.9490323430406656E-0001 & 5.7565740489494235E-0001\\
 5.5789143427761938E-0001 & 5.4155744725456250E-0001 & 5.2661124986901219E-0001 \\
  5.1301221031245340E-0001 & 5.0072314898939969E-0001 & 4.8971023744324276E-0001 \\
4.7994290586969646E-0001 & 4.7139375896640730E-0001 & 8.3241286907550863E-0001 \\
1.1774339903181779E+0000 & 1.5083543629435397E+0000 & 1.8248209517576603E+0000 \\
2.1271151896510347E+0000 & 2.4160208699409385E+0000 & 2.6923595086534992E+0000 \\
 2.9569342453691580E+0000 & 3.2105209755162044E+0000 & 3.4538676455028848E+0000 \\
3.6876953663770639E+0000 & 3.9126999878684479E+0000 & 4.1295537791098290E+0000 \\
 4.3389071105434127E+0000 & 4.5413901032229220E+0000 & 4.7376142351332896E+0000 \\
4.9281739025629719E+0000 & 5.1136479378130289E+0000 & 5.4622938956939894E+0000 \\
5.5692297788707215E+0000 & 5.5592343648444013E+0000 &  5.5318006317643429E+0000 \\
5.5004355530098589E+0000 & 5.4679067992365053E+0000 & 5.4349605692581039E+0000 \\
 5.4018415509009528E+0000 & 5.3686426726693849E+0000 &  5.3354035019177187E+0000 \\
5.3021425008934430E+0000 & 5.2688689546762325E+0000 & 5.2355878340449946E+0000 \\
5.2023019446101286E+0000 & 5.1690129433312447E+0000 &  5.1357218479879554E+0000 \\
5.1024293056141891E+0000 & 5.0691357402363859E+0000 & 5.0358409167684584E+0000 \\
5.0025458322320211E+0000 & 4.9692507815459950E+0000 &  4.9359558081745136E+0000 \\
4.9026609256467161E+0000 & 4.8693661415807602E+0000 & 4.8360714621823154E+0000 \\
 4.8027768933916233E+0000 & 4.7694824412520072E+0000 &  4.7361881120565536E+0000 \\
4.7028939124217354E+0000 & 4.6695998493345792E+0000 & 4.6363059301899531E+0000 \\
 4.6030121628245493E+0000 & 4.5697185555504526E+0000 &  4.5364251171897114E+0000 \\
4.5031318571107021E+0000 & 4.4698387852668010E+0000 & 4.4365458667634358E+0000 \\
\end{array}
$$

$$
\begin{array}{ccc}
 4.4032531103361234E+0000 & 4.3699605252784317E+0000 &  4.3366681214830976E+0000 \\
4.3033759094851984E+0000 & 4.2700839005086080E+0000 & 4.2367921065161327E+0000 \\
 4.2035005402636639E+0000 & 4.1702092153587075E+0000 &  4.1369181463236780E+0000 \\
4.1036273486643819E+0000 & 4.0703368389441578E+0000 & 4.0370466348641843E+0000 \\
 4.0037567553505172E+0000 & 3.9704672206484750E+0000 &  3.9371780524250502E+0000 \\
3.9038809515732566E+0000 & 3.8705838507214630E+0000 & 3.8372867498696694E+0000 \\
 3.8039896490178758E+0000 & 3.7706925481660822E+0000 &  3.7373954473142887E+0000 \\
3.7040983464624951E+0000 & 3.6708012456107015E+0000 & 3.6375041447589079E+0000 \\
 3.6042070439071143E+0000 & 3.5709099430553207E+0000 &  3.5376128422035272E+0000 \\
3.5043157413517336E+0000 & 3.4710186404999400E+0000 & 3.4377218198045367E+0000 \\
 3.4044252993439741E+0000 & 3.3711291008502547E+0000 &  3.3378332478631211E+0000 \\
3.3045377659003338E+0000 &  3.2712426826458989E+0000 & 3.2379480281583430E+0000 \\
 3.2046538351014030E+0000 & 3.1713601389998044E+0000 &  3.1380669785231534E+0000 \\
3.1047743958013707E+0000 &  3.0714824367755503E+0000 & 3.0381911515886585E+0000 \\
 3.0049005950210884E+0000 & 2.9716108269767829E+0000 &  2.9383219130264393E+0000 \\
2.9050851936346499E+0000 &  2.8718528731824328E+0000 & 2.8386253205708071E+0000 \\
 2.8054029398018444E+0000 & 2.7721861737242195E+0000 &  2.7389755082227071E+0000 \\
2.7057714769095989E+0000 &  2.6725746663842937E+0000 & 2.6393857221368817E+0000 \\
 2.6062053551826095E+0000 & 2.5730343495269334E+0000 &  2.5398735705757476E+0000 \\
2.5067239746226584E+0000 &  2.4735866195652930E+0000 & 2.4404626770260829E+0000 \\
 2.4073534460803303E+0000 & 2.3742603688263723E+0000 &  2.3411850480701324E+0000 \\
2.3081292418906175E+0000 &  2.2750949308401385E+0000 & 2.2420843237904744E+0000 \\
 2.2090998874484834E+0000 & 2.1761443801015639E+0000 &  2.1432208902575048E+0000 \\
2.1103328809563707E+0000 &  2.0774842406657146E+0000 & 2.0446793418285290E+0000 \\
 2.0119231083205840E+0000 & 1.9792210932957181E+0000 &  1.9465795691608410E+0000 \\
1.9140056317346885E+0000 &  1.8815073210149785E+0000 & 1.8490937614183503E+0000 \\
 1.8167753248789589E+0000 & 1.7845638208093313E+0000 &  1.7524727176575964E+0000 \\
1.7205174017417589E+0000 &  1.6887154797718421E+0000 & 1.6570871330214122E+0000 \\
 1.6256555322735251E+0000 & 1.5944473242987124E+0000 &  1.5634932024257111E+0000 \\
1.5328285757821936E+0000 &  1.5024943539853568E+0000 & 1.4725378663866274E+0000 \\
1.4430139373005373E+0000 & 1.4139861407724177E+0000 &  1.3855282600450476E+0000 \\
1.3577259774985657E+0000 &  1.3306788197951572E+0000 & 1.3045023793744458E+0000 \\
1.2793308262228334E+0000 & 1.2553197117722144E+0000 &  1.2326490487971048E+0000 \\
\end{array}
$$
}
From the analytic expression of $T$, it is clear that $T(p)\neq p$ so that $p$ has period 2.

\subsection{Appendix F: homoclinic points search.}

We plot a projection of the attractor in three-dimensional space
(averaging some coordinates at the beginning of the year, in the middle of the year
and in Spring). For that purpose we use $\mbox{MATLAB}^{\copyright}$ . When we plot the image of the first 1000 iterates of a single point of the local unstable manifold $W^u_{loc}(p)$ we
roughly recover the image of $\Lambda$ obtained plotting all the sequences of points
pseudo-randomly generated. This is an indication that $W^u(p)$ may be dense in $\Lambda$.
The small red circle in the figures, indicates the approximate position
of the fixed point $p$.

\begin{figure}[ht]
\includegraphics[scale=0.45]{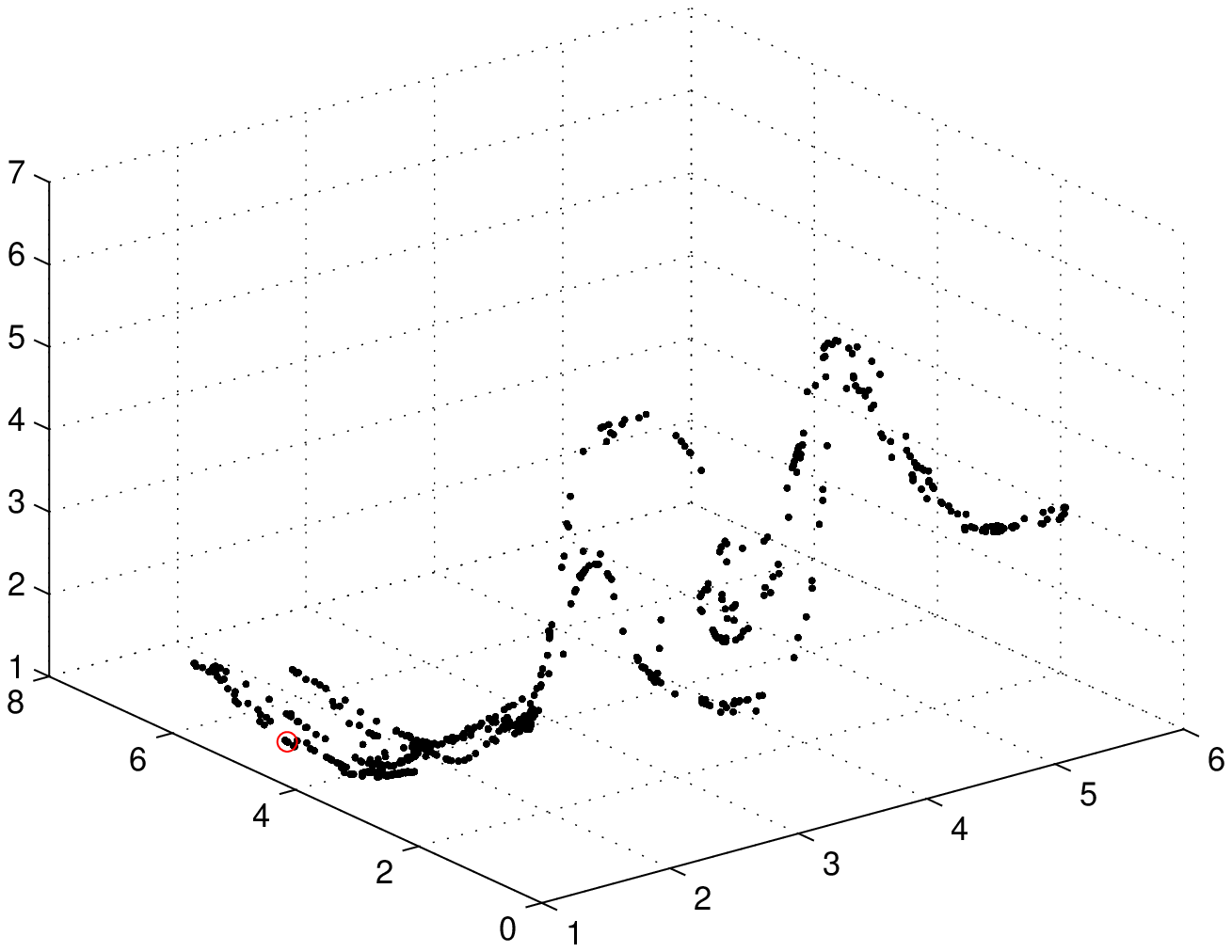}
\includegraphics[scale=0.45]{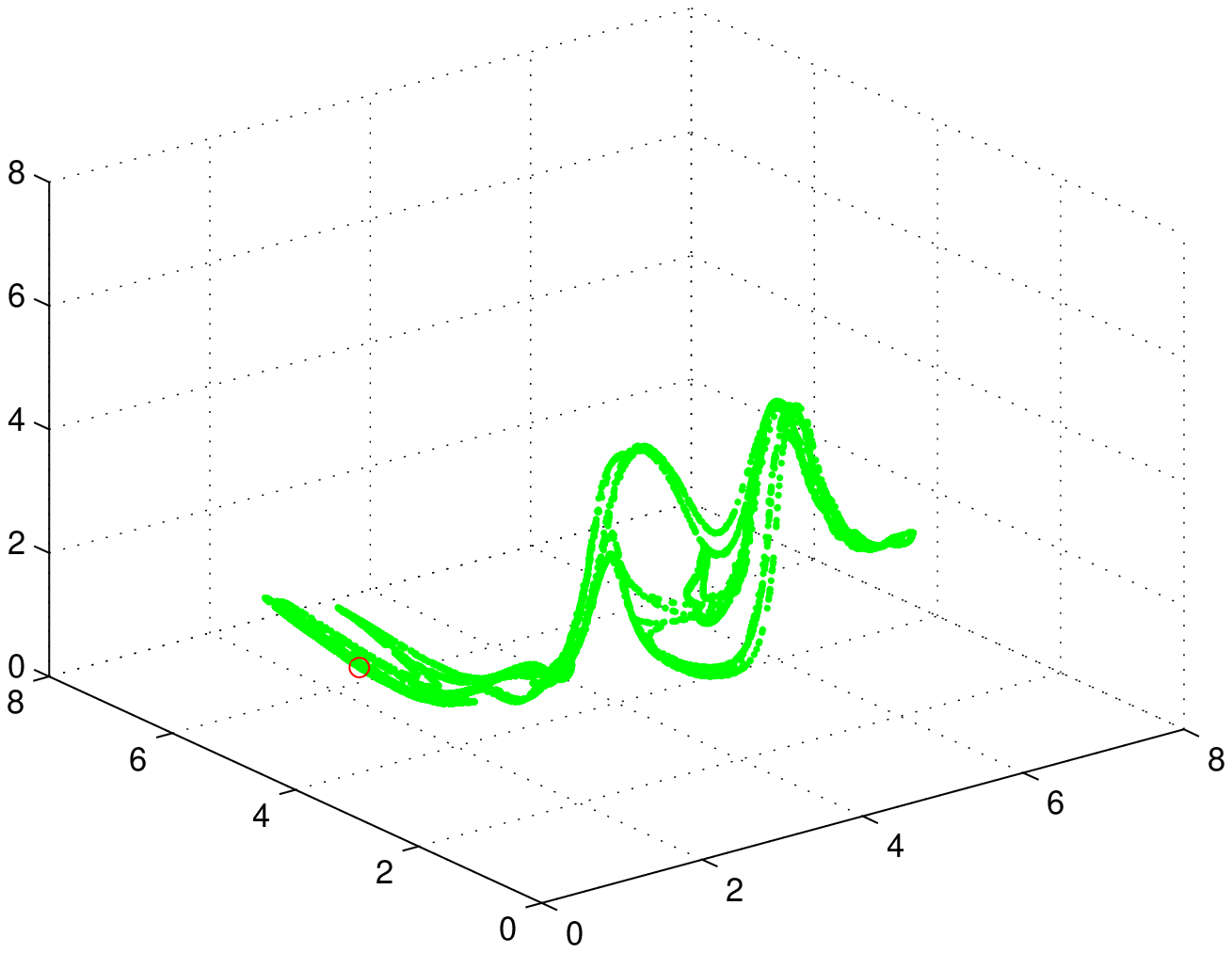}
\caption{A thousand iterates of $y\in W^u_{loc}(p)$ projected in $3D$ (in black) versus the
 projection of $\Lambda$ (in green).}\label{figatra3}
\end{figure}

\begin{figure}[ht] \label{esquema2}
\begin{center}
\includegraphics[scale=0.46]{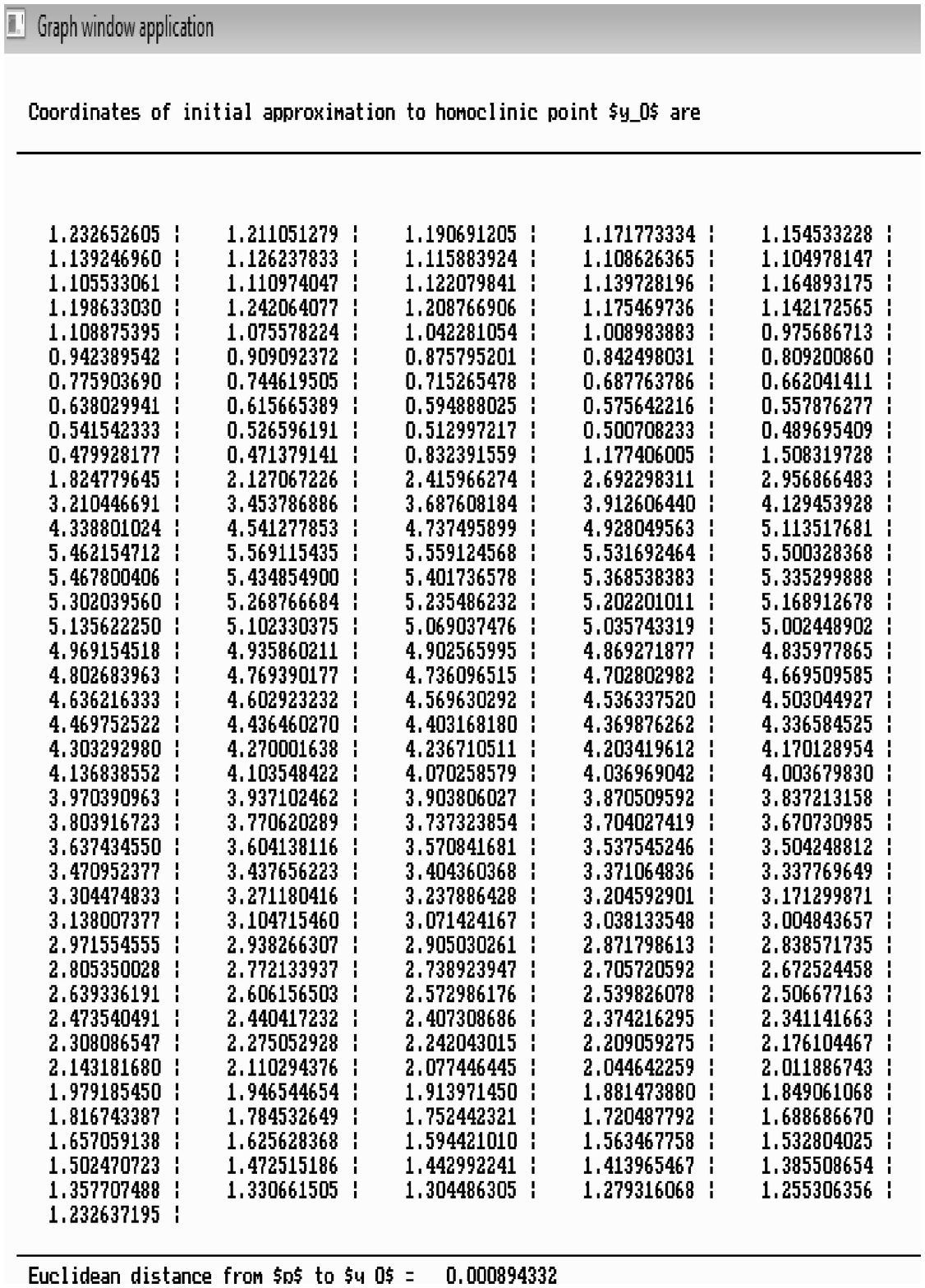}
\caption{Coordinates of (rough) homoclinic point $y_0$ associated to $p$.}
 \label{fig4}
\end{center}
\end{figure}

We also give approximate coordinates of the homoclinic point $y_0$ and
angular values obtained
with an initial length of $I_0$ of $ 1.122\times 10^{-7}$ in
the appendix below with two copies of runnings of ``homclin7.exe'' which is a refinement of ''homclin4.exe''
which generates an output close to LaTeX. In these runs we use three values for the parameter ''exponent'',
one of them is $10.80$ and the other is $15.03$. We also exhibit one exponent, $11.00$ which fails to detect homoclinic points. Observe that $10.80$ is not very far apart from $11.00$.

We only exhibit samples of the runs since they are rather extensive. It is possible to observe that the program corrects the quantity of iterations when the results are larger than certain bounds.

\subsection*{Runs of ''homoclin7''}
\footnotesize{
Enter gap as an exponent of 2 not greater than 20 and greater than 3, we choose gap=$2^{exponent} $. \newline                  This gap will be used to divide the distance between three consecutive points \newline                                     of the initial subdivision of $[T^{38}(p),T^{40}(p)]$ centered around \newline  the                                         rough homoclinic point $y_0$ previously found.           \\                   To finish the program enter exponent=0,                         \newline
\noindent
exponent =  1.0800000000000000E+0001                                        gap = 1782.8875536                   exponent = 10.8000000                                          \newline      iter= 0,                              dist(L,R) previous to iteration is  3.5890677421214318E-0010    \newline               dist between L and R after applying $T^{20}$ is  6.3294602762248157E-0007                                 \newline               dist(L,L1)+dist(L1,M)+dist(M,R1)+dist(R1,R)=  6.3294602764511862E-0007                                \newline               iter= 10, dist(L,R) previous to iteration is  7.1002349673070908E-0010                                \newline               dist between L and R after applying $T^{20}$ is  1.0395709474479636E-0008                                 \newline               dist(L,L1)+dist(L1,M)+dist(M,R1)+dist(R1,R)=  1.0395709474479637E-0008                                \newline               iter= 20, dist(L,R) previous to iteration is  1.1661654789708924E-0011                                \newline               dist between L and R after applying $T^{20}$ is  6.7123467133053106E-0010                                 \newline               dist(L,L1)+dist(L1,M)+dist(M,R1)+dist(R1,R)=  6.7123467133053267E-0010                                \newline
$\vdots \hspace{1cm} \vdots\hspace{1cm}  \vdots\hspace{1cm}\vdots\hspace{1cm}\vdots \hspace{1cm} \vdots\hspace{1cm}  \vdots\hspace{1cm}\vdots\hspace{1cm}\vdots$

\noindent
iter= 600, dist(L,R) previous to iteration is  1.7874017013569691E-0011                               \newline               dist between L and R after applying $T^{20}$ is  3.5203152021803773E-0006                                 \newline               dist(L,L1)+dist(L1,M)+dist(M,R1)+dist(R1,R)=  3.5203152021804383E-0006                                \newline               iter= 610, dist(L,R) previous to iteration is  3.9490041813784173E-0009                               \newline               dist between L and R after applying $T^{20}$ is  4.3191750633612666E-0005                                 \newline               dist(L,L1)+dist(L1,M)+dist(M,R1)+dist(R1,R)=  4.3191750633619148E-0005                                \newline               iter= 620, dist(L,R) previous to iteration is  4.8451457912339789E-0008                               \newline              dist between L and R after applying $T^{18}$ is  2.7846424585034373E-0003                                 \newline               dist(L,L1)+dist(L1,M)+dist(M,R1)+dist(R1,R)=  2.7846424590282723E-0003                                \newline               iter= 629, dist(L,R) previous to iteration is  3.1237443470096728E-0006                               \newline
\medbreak

\noindent
Sup distance from  fixed point p to point $T^{1258}(y_0)$ is  0.00022627982                           \newline            L1 distance from  fixed point p to point $T^{1258}(y_0)$ is  0.01670265394                            \newline            Euclidean distance from  fixed point p to point $T^{1258}(y_0)$ is  0.00141044319                     \newline            Euclidean distance from  fixed point p to point L is  0.00007862956                                   \newline            Euclidean distance from  fixed point p to point R is  0.00280177267                                   \newline            Euclidean distance between L and R is  0.00278464246           \newline            angle between $W^u_e(p)$ and iterated arc $LR$ =  0.00003 radians,         angle in degrees is $\approx$ 0                                \newline            angle between vectors $(p,L)$ and $(p,R)$ is  1.33746                      angle in degrees is $\approx$ 77                              \newline            angle between vectors $(p,T^2(L))$ and $(p,T^2(R))$ is  3.01126 radians,                                         angle in degrees is $\approx$ 173                              \newline            angle between vectors $(p,T^{4}(L))$ and $(p,T^{4} (R))$ is  3.13900 radians                                      angle in degrees is $\approx$  180                             \newline            angle between vectors $(p,T^{6}(L))$ and $(p,T^{6} (R))$ is  3.14031 radians                                      angle in degrees is $\approx$  180                             \newline            angle between vectors $(p,T^{8}(L))$ and $(p,T^{8} (R))$ is  3.13743 radians                                      angle in degrees is $\approx$  180                             \newline            angle between vectors $(p,T^{10}(L))$ and $(p,T^{10} (R))$ is  3.12545 radians                                    angle in degrees is $\approx$  179                             \newline            angle between vectors $(p,T^{12}(L))$ and $(p,T^{12} (R))$ is  3.10750 radians                                    angle in degrees is $\approx$  178                             \newline            angle between vectors $(p,T^{14}(L))$ and $(p,T^{14} (R))$ is  3.05523 radians                                    angle in degrees is $\approx$  175                             \newline
\medbreak

\noindent
Enter gap distance as a real exponent of 2 between 3 and 20   \newline            last exponent used is 10.80000000                              \newline               To finish the program enter exponent=0. Chosen exponent = 15.03200000   \newline                                         gap = 33502.9380910                  exponent = 15.0320000    \newline            iter= 0,                              dist(L,R) previous to iteration is  1.9099531465388560E-0011    \newline               dist between L and R after applying $T^{30}$ is  1.4460501331114946E-0008                                 \newline               dist(L,L1)+dist(L1,M)+dist(M,R1)+dist(R1,R)=  1.4460501331114946E-0008                                \newline               iter= 15, dist(L,R) previous to iteration is  8.6323753141429872E-0013                                \newline               dist between L and R after applying $T^{30}$ is  1.6945248777241631E-0009                                 \newline               dist(L,L1)+dist(L1,M)+dist(M,R1)+dist(R1,R)=  1.6945248777241637E-0009                                \newline               iter= 30, dist(L,R) previous to iteration is  1.0115684115597356E-0013                                \newline               dist between L and R after applying $T^{30}$ is  2.4222230946916936E-0009                                 \newline               dist(L,L1)+dist(L1,M)+dist(M,R1)+dist(R1,R)=  2.4222230946916942E-0009                                \newline
$\vdots\hspace{1cm}\vdots\hspace{1cm}\vdots\hspace{1cm}\vdots\hspace{1cm}\vdots\hspace{1cm}\vdots \hspace{1cm}\vdots\hspace{1cm}\vdots\hspace{1cm}\vdots\hspace{1cm}\vdots\hspace{1cm}\vdots$

\noindent
iter= 594, dist(L,R) previous to iteration is  1.2923688152099415E-0009                               \newline               dist between L and R after applying $T^{30}$ is  9.5328543032789122E-0004                                 \newline               dist(L,L1)+dist(L1,M)+dist(M,R1)+dist(R1,R)=  9.5328543041795548E-0004                                \newline            distance between iterates is too large or curvature is big     \newline               dist between L and R after applying $T^{16}$ is  6.3306020056154999E-0008                                \newline               dist(L,L1)+dist(L1,M)+dist(M,R1)+dist(R1,R)=  6.3306020056154999E-0008                                \newline               iter= 602, dist(L,R) previous to iteration is  3.7791322627648531E-0012                               \newline               dist between L and R after applying $T^{30}$ is  1.9472974658090783E-0005                                 \newline               dist(L,L1)+dist(L1,M)+dist(M,R1)+dist(R1,R)=  1.9472974658261263E-0005                                \newline               iter= 617, dist(L,R) previous to iteration is  1.1624636981219546E-0009                               \newline              dist between L and R after applying $T^{24}$) is  2.7688866606693677E-0003                                 \newline               dist(L,L1)+dist(L1,M)+dist(M,R1)+dist(R1,R)=  2.7688866611853443E-0003                                \newline               iter= 629, dist(L,R) previous to iteration is  1.6529216948955862E-0007                               \newline
\medbreak

\noindent
Sup distance from  fixed point p to point $T^{1258}(y_0)$ is  0.00022627982                           \newline            L1 distance from  fixed point p to point $T^{1258}(y_0)$ is  0.01670265394                            \newline            Euclidean distance from  fixed point p to point $T^{1258}(y_0)$ is  0.00141044319                     \newline            Euclidean distance from  fixed point p to point L is  0.00279389720                                   \newline            Euclidean distance from  fixed point p to point R is  0.00008060930                                   \newline            Euclidean distance between L and R is  0.00276888666           \newline            angle between $W^u_e(p)$ and iterated arc $LR$ =  3.14156 radians,         angle in degrees is $\approx$ 180                              \newline            angle between vectors $(p,L)$ and $(p,R)$ is  1.24158                      angle in degrees is $\approx$ 71                              \newline            angle between vectors $(p,T^2(L))$ and $(p,T^2(R))$ is  2.76672 radians,                                         angle in degrees is $\approx$ 159                              \newline            angle between vectors $(p,T^{4}(L))$ and $(p,T^{4} (R))$ is  3.13482 radians                                     angle in degrees is $\approx$  180                             \newline            angle between vectors $(p,T^{6}(L))$ and $(p,T^{6} (R))$ is  3.14036 radians                                      angle in degrees is $\approx$  180                             \newline            angle between vectors $(p,T^{8}(L))$ and $(p,T^{8} (R))$ is  3.13746 radians                                      angle in degrees is $\approx$  180                             \newline            angle between vectors $(p,T^{10}(L))$ and $(p,T^{10} (R))$ is  3.12554 radians                                    angle in degrees is $\approx$  179                             \newline            angle between vectors $(p,T^{12}(L))$ and $(p,T^{12} (R))$ is  3.10770 radians                                    angle in degrees is $\approx$  178                             \newline            angle between vectors $(p,T^{14}(L))$ and $(p,T^{14} (R))$ is  3.05600 radians                                    angle in degrees is $\approx$  175                             \newline
\medbreak

\noindent
Enter gap distance as a real exponent of 2 between 3 and 20                        \newline                             last exponent used is 15.03200000                              \newline               To finish the program enter exponent=0. Chosen exponent = 11.00000000                                 \newline            gap = 2048.0000000                   exponent = 11.0000000    \newline            iter= 0                              dist(L,R) previous to iteration is  3.1244649405582926E-0010    \newline               dist between L and R after applying $T^{24}$ is  7.0403463016490730E-0008                                 \newline               dist(L,L1)+dist(L1,M)+dist(M,R1)+dist(R1,R)=  7.0403463017652905E-0008                                \newline               iter= 12, dist(L,R) previous to iteration is  6.8753381889103818E-0011                                \newline               dist between L and R after applying $T^{24}$ is  1.6655785942826604E-0007                                 \newline               dist(L,L1)+dist(L1,M)+dist(M,R1)+dist(R1,R)=  1.6655785942826604E-0007                                \newline               iter= 24, dist(L,R) previous to iteration is  1.6265415954054225E-0010                                \newline               dist between L and R after applying $T^{24}$ is  4.1674332229362251E-0008                                 \newline               dist(L,L1)+dist(L1,M)+dist(M,R1)+dist(R1,R)=  4.1674332229362251E-0008                                \newline               iter= 36, dist(L,R) previous to iteration is  4.0697590197929017E-0011                                \newline               dist between L and R after applying $T^{24}$ is  3.5735658309920103E-0007                                 \newline               dist(L,L1)+dist(L1,M)+dist(M,R1)+dist(R1,R)=  3.5735658309920103E-0007                                \newline               iter= 48, dist(L,R) previous to iteration is  3.4898103807564854E-0010                                \newline               dist between L and R after applying $T^{24}$ is  3.1598931331358240E-0006                                 \newline               dist(L,L1)+dist(L1,M)+dist(M,R1)+dist(R1,R)=  3.1598931331363929E-0006                                \newline               iter= 60, dist(L,R) previous to iteration is  3.0858331376860077E-0009                                \newline               dist between L and R after applying $T^{24}$ is  8.7084375703600538E-0005                                 \newline               dist(L,L1)+dist(L1,M)+dist(M,R1)+dist(R1,R)=  8.7084375703625518E-0005                                \newline               iter= 72, dist(L,R) previous to iteration is  8.5043335648104785E-0008                                \newline               dist between L and R after applying $T^{24}$ is  1.3240893865632102E-0003                                 \newline               dist(L,L1)+dist(L1,M)+dist(M,R1)+dist(R1,R)=  1.3240893866628222E-0003                                \newline            distance between iterates is too large or curvature is big     \newline               dist between L and R after applying $T^{16}$ is  2.2719494014277907E-0004                                \newline               dist(L,L1)+dist(L1,M)+dist(M,R1)+dist(R1,R)=  2.2719494016150623E-0004                                \newline            D dist between iterates continues to be too large or curvature is big                                   \newline               dist between L and R after applying $T^{4}$ is  7.0995114872475991E-0006                                 \newline               dist(L,L1)+dist(L1,M)+dist(M,R1)+dist(R1,R)=  7.0995114872475992E-0006                                \newline               iter= 74, dist(L,R) previous to iteration is  6.9331166869485615E-0009                                \newline               dist between L and R after applying $T^{24}$ is  1.3422129968050613E-0005                                 \newline               dist(L,L1)+dist(L1,M)+dist(M,R1)+dist(R1,R)=  1.3422129968054511E-0005                                \newline
$\vdots\hspace{1cm}\vdots\hspace{1cm}\vdots\hspace{1cm}\vdots\hspace{1cm}\vdots\hspace{1cm}\vdots \hspace{1cm}\vdots\hspace{1cm}\vdots\hspace{1cm}\vdots$

\noindent
iter= 624, dist(L,R) previous to iteration is  4.5100485932166933E-0010                               \newline              dist between L and R after applying $T^{10}$ is  4.4858004094609686E-0007                                 \newline               dist(L,L1)+dist(L1,M)+dist(M,R1)+dist(R1,R)=  4.4858004094609686E-0007                                \newline               iter= 629, dist(L,R) previous to iteration is  4.3806644611849589E-0010                               \newline
\medbreak

\noindent
Sup distance from  fixed point p to point $T^{1258}(y_0)$ is  0.00022627982                           \newline            L1 distance from  fixed point p to point $T^{1258}(y_0)$ is  0.01670265394                            \newline            Euclidean distance from  fixed point p to point $T^{1258}(y_0)$ is  0.00141044319                     \newline            Euclidean distance from  fixed point p to point L is  0.00141021923                                   \newline            Euclidean distance from  fixed point p to point R is  0.00141066714                                   \newline            Euclidean distance between L and R is  0.00000044858           \newline            angle between $W^u_e(p)$ and iterated arc $LR$ =  0.00003 radians,           angle in degrees is $\approx$ 0                                \newline            angle between vectors $(p,L)$ and $(p,R)$ is  0.00002                      angle in degrees is $\approx$ 0                               \newline            angle between vectors $(p,T^2(L))$ and $(p,T^2(R))$ is  0.00000 radians,                                            angle in degrees is $\approx$ 0                                \newline            angle between vectors $(p,T^{4}(L))$ and $(p,T^{4} (R))$ is  0.00000 radians                                      angle in degrees is $\approx$  0                               \newline            angle between vectors $(p,T^{6}(L))$ and $(p,T^{6} (R))$ is  0.00000 radians                                      angle in degrees is $\approx$  0                               \newline            angle between vectors $(p,T^{8}(L))$ and $(p,T^{8} (R))$ is  0.00000 radians                                      angle in degrees is $\approx$  0                               \newline            angle between vectors $(p,T^{10}(L))$ and $(p,T^{10} (R))$ is  0.00000 radians                                    angle in degrees is $\approx$  0                               \newline            angle between vectors $(p,T^{12}(L))$ and $(p,T^{12} (R))$ is  0.00001 radians                                   angle in degrees is $\approx$  0                               \newline            angle between vectors $(p,T^{14}(L))$ and $(p,T^{14} (R))$ is  0.00009 radians                                    angle in degrees is $\approx$  0                               \newline            Enter gap distance as a real exponent of 2 between 3 and 20    \newline            last exponent used is 11.00000000                              \newline               To finish the program enter exponent=0. Chosen exponent =  0.00000000                                 \newline            Press ENTER  to finish the program.                      \newline

  }

\bigbreak

J.~J.~Nieto, Facultad de Matem\'{a}ticas, Universidad de Santiago de Compostela,

Santiago de Compostela, La Coruña, España.

juanjose.nieto.roig@usc.es

\medbreak

M.~J.~Pacifico, Instituto de Matematica, Universidade Federal do Rio de Janeiro,

C. P. 68.530, CEP 21.945-970, Rio de Janeiro, R. J. , Brazil

pacifico@im.ufrj.br

\medbreak

J.~L.~Vieitez, Regional Norte, Universidad de la Republica,

Rivera 1350, CP 50000, Salto, Uruguay

 jvieitez226@gmail.com

\end{document}